\tikzset{
  on each segment/.style={
    decorate,
    decoration={
      show path construction,
      moveto code={},
      lineto code={
        \path [#1]
        (\tikzinputsegmentfirst) -- (\tikzinputsegmentlast);
      },
      curveto code={
        \path [#1] (\tikzinputsegmentfirst)
        .. controls
        (\tikzinputsegmentsupporta) and (\tikzinputsegmentsupportb)
        ..
        (\tikzinputsegmentlast);
      },
      closepath code={
        \path [#1]
        (\tikzinputsegmentfirst) -- (\tikzinputsegmentlast);
      },
    },
  },
  mid arrow/.style={postaction={decorate,decoration={
        markings,
        mark=at position .5 with {\arrow[#1]{stealth}}
      }}},
}
\newtheorem{theorem}{Theorem}[section]
\newtheorem{definition}{Definition}[section]
\newtheorem{proposition}{Proposition}[section]
\newtheorem{lemma}{Lemma}[section]
\newtheorem{remark}{Remark}[section]
\def\bl{\begin{lemma}}
\def\el{\end{lemma}}
\def\cC{\mathcal C}
\def\C{\mathbb{C}}
\def\R{\mathbb{R}}
\def\hnu{{\hat\nu}}
\def\hgamma{{\hat\gamma}}
\def\vp{\varphi}
\def\be{\begin{equation}}
\def\ee{\end{equation}}
\def\DS {\displaystyle}
\title{On the ergodic properties of time changes of partially hyperbolic homogeneous flows}
\author{Changguang Dong}
\date{\today}
\begin{document}
\maketitle
\begin{abstract}
For any accessible partially hyperbolic homogeneous flow, we show that all smooth time changes are K and hence mixing of all orders. We also establish stable ergodicity for time-one map of these time changes.
\end{abstract}

\tableofcontents

\section{Introduction and results}

This paper is a contribution to our understanding of a natural class of partially hyperbolic systems. Namely we will be interested in reparametrizations (or time changes) of algebraic flows on homogeneous spaces. Algebraic flows, roughly speaking, contain three categories: elliptic, parabolic and 
(partially) hyperbolic. These are determined by the divergence rate of nearby orbits. For these flows, the reparametrizations (time changes) could reflect different ergodic properties. 

In the literature, time changes are well studied in the case of elliptic flows. Time changes often don't change the dynamics like for rotations or linear flows \cite{Ko} but sometimes lead to mixing phenomenon \cite{Fa1,Fa2}. For parabolic systems, there has been recently a lot attention in studying time changes. These include time changes of horocycle flow \cite{R-h,KLU,FU,FF, RT}, Heisenberg nilflows \cite{AFU,FK1,FK2}, general unipotent flows \cite{Si}, and general nilflows \cite{afru,Rav}. 

However, for partially hyperbolic systems, time changes are not well understood. Here, we are specifically interested in the ergodic properties of (partially) hyperbolic homogeneous flows. The first study of this kind dates back to Hopf \cite{ho}, who proved the ergodicity of geodesic flow on surfaces of constant negative curvature. Later, they are proven to be Bernoulli \cite{ow}. Ratner \cite{ra} showed a much more general result: Anosov flow with a Gibbs measure is Bernoulli. Dani proved the K property for partially hyperbolic homogeneous flows \cite{da}, and the Bernoulli property for a special class of these flows \cite{da-1}. 
Small time changes of homogeneous flows are K by the results of
\cite{PSST}.
Very recently, a remarkable result by Kanigowski \cite{Ka}, asserts that all mixing partially hyperbolic homogeneous flows are Bernoulli.

 It is natural to ask, whether a certain ergodic property persists among smooth time changes of these partially hyperbolic homogeneous flows. This is a motivation of our current work. We can show that for any partially hyperbolic homogeneous flows with accessibility, all smooth time changes admit K property, and hence are mixing. 
 This means, in particular, K property (and mixing property) persists under time change.

To be more precise, let $G$ be a semisimple Lie group over $\R$ or $\C$, and $\Gamma<G$ be a cocompact lattice. Denote $M:=G/\Gamma$. Let $g_t$ be a flow on $M$ generated by the vector field $X$, meaning that $g_t(x)=\exp(tX)x$ for any $x\in M$. The flow preserves the Haar measure $m$. We assume the entropy of $g_t$ is positive. Hence $g_t $ is a partially hyperbolic flow. Let $\tau:M\to\mathbb R$ be a positive smooth
function. The time change $g^\tau_t$ is a flow generated by the vector field $(\tau)^{-1} X$. 
In other words, we have the new flow $g^\tau_t(x):=g_{\alpha(x,t)}(x)$, where $\alpha(x,t)$ is given by $$\int_0^{\alpha(x,t)}\tau(g_s(x))ds=t.$$ 
Some useful properties of $g_t^\tau$ are known. The flow $g^\tau_t$ preserves a smooth measure $m^\tau$ given by $dm^{\tau}=\frac{\tau}{\int_M\tau dm}dm$, and the entropy $h_{m^\tau}(g_t^\tau)>0$. The new flow is ergodic if $g_t$ is. 

Our first result is

\begin{theorem}\label{main}
Let $G$ be a semisimple Lie group over $\R$ or $\C$, and $\Gamma<G$ be a cocompact lattice. Let $g_t$ be a flow on $G/\Gamma$ generated by a vector field $X\in$ Lie($G$). Assume that $g_t$ is partially hyperbolic, and accessible. Let $\tau:G/\Gamma\to\mathbb R$ be a smooth positive
function, and $g^\tau_t$ be the time change generated by $\tau^{-1}X$. 
Then $g^\tau_t$ is accessible. Moreover, $g^\tau_t$ has 
K property, hence it is mixing of all orders.
\end{theorem}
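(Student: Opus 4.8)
\emph{Accessibility.} I would first show that a smooth time change moves neither the strong stable nor the strong unstable foliation. For the homogeneous flow $g_t$, the strong stable (resp.\ unstable) manifold through $x$ is the orbit of $x$ under the stable (resp.\ unstable) horospherical subgroup of $G$ determined by $\mathrm{ad}(X)$; these are smooth (indeed algebraic) foliations $W^s,W^u$ with absolutely continuous holonomies. Since $\tau$ is continuous and positive on the compact space $M$, it is bounded between two positive constants, so the cocycle $\alpha(x,t)$ tends to $\pm\infty$ at a uniformly linear rate as $t\to\pm\infty$, and for $y$ in the strong stable leaf of $x$ the difference $\alpha(y,t)-\alpha(x,t)$ stays bounded (in fact converges). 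A short estimate then gives $d(g^\tau_t x, g^\tau_t y)\to 0$ exponentially fast iff $y\in W^s(x)$, and symmetrically for $W^u$. Hence $g^\tau_t$ is partially hyperbolic with exactly the same invariant foliations $W^s,W^u$ (and the same orbit foliation) as $g_t$, only the contraction/expansion rates being rescaled. Consequently the accessibility class of any point for $g^\tau_t$ equals that for $g_t$, which is all of $M$; thus $g^\tau_t$ is accessible.

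\emph{K property.} Set $f:=g^\tau_1$, a $C^\infty$ diffeomorphism preserving the smooth measure $m^\tau$; by the previous paragraph it is partially hyperbolic, and $h_{m^\tau}(f)>0$. Let $\Pi$ be its Pinsker $\sigma$-algebra, equivalently the (flow-invariant) Pinsker $\sigma$-algebra of $g^\tau_t$. By the Pesin entropy formula for $C^{1+\alpha}$ volume-preserving systems, all the entropy of $f$ is carried by the unstable direction $E^u$; the classical consequence (Pesin, Ledrappier) is that $\Pi$ is essentially $W^u$-saturated, i.e.\ $\Pi\subseteq\widehat{W^u}$, where $\widehat{W^u}$ denotes the $\sigma$-algebra of measurable mod-$0$ $W^u$-saturated sets. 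Applying the same statement to $f^{-1}$, whose unstable foliation is $W^s$, and using $\Pi(f^{-1})=\Pi(f)$, yields $\Pi\subseteq\widehat{W^s}$. Therefore $\Pi\subseteq\widehat{W^s}\cap\widehat{W^u}$.

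\emph{Conclusion.} I would then invoke the Hopf/Brin-type argument: since $W^s$ and $W^u$ are absolutely continuous, any set that is both essentially $W^s$-saturated and essentially $W^u$-saturated coincides mod $0$ with a union of accessibility classes. Accessibility of $g^\tau_t$ makes $M$ a single accessibility class, so $\widehat{W^s}\cap\widehat{W^u}$ is trivial, whence $\Pi$ is trivial. Thus $f=g^\tau_1$ is a Kolmogorov automorphism, so the flow $g^\tau_t$ is Kolmogorov, and in particular mixing of all orders.

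\emph{Main obstacle.} The delicate point is not the entropy bookkeeping but checking that the full hyperbolic apparatus — Pesin theory, the description of the Pinsker algebra via the unstable foliation, absolute continuity, and the Hopf argument — genuinely applies to $g^\tau_t$, which is \emph{not} algebraic. The structural fact that unlocks everything, and which I would establish carefully first, is that $g^\tau_t$ is partially hyperbolic with precisely the algebraic foliations $W^s,W^u$ of $g_t$; once that is secured, the required regularity and absolute continuity are inherited from $g_t$, and the remaining steps are a combination of standard results.
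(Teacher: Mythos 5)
Your accessibility argument contains a fatal structural error. You claim that the time change leaves the strong stable and unstable foliations unchanged (``for $y$ in the strong stable leaf of $x$ the difference $\alpha(y,t)-\alpha(x,t)$ stays bounded \dots\ $d(g^\tau_t x, g^\tau_t y)\to 0$ \dots\ iff $y\in W^s(x)$'', concluding that $g_t^\tau$ has ``exactly the same invariant foliations $W^s,W^u$''). This is false. The difference $\alpha(y,t)-\alpha(x,t)$ does converge, but typically to a \emph{nonzero} constant $c$ (up to reparametrization, $c$ is determined by $\beta^s(x,y):=\int_0^\infty(\tau(g_rx)-\tau(g_ry))\,dr$). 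Consequently $d(g_t^\tau x, g_t^\tau y)$ converges to roughly $|c|\cdot\|X\|$, not to $0$, and the old strong stable leaf $W^s(x)$ is \emph{not} $g_t^\tau$-invariant. The correct statement (Proposition \ref{inv-p} in the paper) is that the new stable leaf is a \emph{graph} over the old one along the flow direction: $W^s_\tau(x)=\{g^\tau_{-\beta^s(x,y)}y : y\in W^s(x)\}$. So the $su$-foliations genuinely move under a time change, and with them the accessibility classes may change. Indeed the paper's own remark before Theorem \ref{m-c-b} gives a concrete example (a constant-time suspension of a hyperbolic toral automorphism) where a time change changes the accessibility picture; if your claim were correct, that phenomenon could not occur. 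This is not a repairable gap in your write-up --- it is the reason the paper has to develop the $\Phi_x$-correspondence of $su$-paths, the periodic cycle functional, and the dichotomy of Proposition \ref{key-p-a}, and then exploit the homogeneous structure (invariance of Haar measure under left translations) to rule out a codimension-one accessibility class.

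Two further remarks. First, even granting accessibility, your route to K is different from the paper's: you invoke the Pesin/Ledrappier description of the Pinsker $\sigma$-algebra in terms of $W^u$-saturated sets together with a Hopf-type argument, whereas the paper establishes center bunching (Theorem \ref{m-c-b}) and applies Burns--Wilkinson (Theorem \ref{bw}). Your route, if it could be made rigorous, would be more general in that it seemingly avoids center bunching, but the needed Pinsker-algebra statement is delicate for partially hyperbolic systems with a nontrivial center, and the absolute continuity and measurable-saturation arguments must be carried out for the \emph{new} foliations $W^{s/u}_\tau$, which are only H\"older in the transverse direction. Second, the degenerate version of your intuition that survives is exactly the content of Propositions \ref{key-p-2}--\ref{key-p-9}: each $su$-path for $g_t$ corresponds to an $su$-path for $g_t^\tau$ after sliding the endpoints along the orbit by an amount controlled by the periodic cycle functional; the whole difficulty is showing these slides do not conspire to confine accessibility classes to codimension-one submanifolds.
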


Here the accessibility condition implies that the group $G$ has no compact factors \cite{ps}.
Let's remark that there are examples of partially hyperbolic diffeomorphisms which are K but the mixing 
(and hence, K property) can be destroyed under certain time change. For example, consider a constant suspension flow over a hyperbolic automorphism on $\mathbb T^2$, for any nonzero $t$, the time-$t$ map is never mixing with respect to the volume measure, however one can find a nontrivial time change so that the new flow is accessible, and hence mixing (in fact Bernoulli).

Our strategy of proving K property, is to show that the flow is center bunched and also accessible. 

Firstly, we verify the partial hyperbolicity and center bunching, which is proven by computing the push-forwards of any vector in the tangent bundle.
\begin{theorem}\label{m-c-b}
Let $G$ be a semisimple Lie group over $\R$ or $\C$, and $\Gamma<G$ be a cocompact lattice. Let $g_t$ be a flow on $G/\Gamma$ generated by a vector field $X\in$ Lie($G$). Assume that $g_t$ is partially hyperbolic. Let $\tau:G/\Gamma\to\mathbb R$ be a smooth positive
function, and $g^\tau_t$ be the time change generated by $\tau^{-1}X$. 
Then $g_t^\tau$ is 
partially hyperbolic and center bunched
\footnote{ The relevant facts from the theory of partially hyperbolic systems will be recalled
in \S \ref{SSPhD}.}.
\end{theorem}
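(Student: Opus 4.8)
The plan is to compute the derivative cocycle of $g_t^\tau$ explicitly in the invariant frame coming from the group structure, read off an invariant splitting, and then estimate the three rates, the only genuinely delicate point being the center bunching.

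First I would trivialize $T(G/\Gamma)$ by the right-invariant vector fields on $G$ (these descend to $G/\Gamma$ because the flow acts by left translation $x\mapsto\exp(tX)x$, which commutes with the right $\Gamma$-action), so that $T(G/\Gamma)\cong(G/\Gamma)\times\g$ and $Dg_t$ becomes the \emph{constant} linear cocycle $v\mapsto\mathrm{Ad}(\exp tX)v=e^{t\,\mathrm{ad}X}v$. Decomposing $\g=\g^-\oplus\g^0\oplus\g^+$ according to whether the eigenvalues of $\mathrm{ad}X$ have negative, zero, or positive real part, the subbundles $E^s,E^c,E^u$ corresponding to $\g^-,\g^0,\g^+$ give the partially hyperbolic splitting for $g_t$; positivity of entropy forces $\g^\pm\neq0$, and $\R X\subset\g^0$ since $\mathrm{ad}X\cdot X=0$. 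On $\g^\pm$ the cocycle contracts, respectively expands, uniformly exponentially (the polynomial Jordan factors being dominated), while on $\g^0$ the spectrum of $\mathrm{ad}X$ is purely imaginary, so $\|e^{t\,\mathrm{ad}X}|_{\g^0}\|\le C(1+|t|)^{d}$ and its conorm is $\ge c(1+|t|)^{-d}$ for a suitable $d$ (using $e^{tA}=e^{tS}e^{tN}$ with $e^{tS}$ bounded and $e^{\pm tN}$ polynomial).

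Next I would differentiate $g_t^\tau(x)=g_{\alpha(x,t)}(x)$ by the chain rule, obtaining in the above trivialization
\[
Dg_t^\tau(x)v=e^{\alpha(x,t)\,\mathrm{ad}X}v+\big(D_x\alpha(x,t)\cdot v\big)\,X,\qquad D_x\alpha(x,t)\cdot v=-\tau(g_{\alpha}x)^{-1}\!\int_0^{\alpha}D\tau(g_ux)\big[e^{u\,\mathrm{ad}X}v\big]\,du,
\]
the second formula coming from implicit differentiation of $\int_0^{\alpha}\tau(g_sx)\,ds=t$. Since the correction term lies in $\R X\subset\g^0$ and $e^{u\,\mathrm{ad}X}$ preserves $\g^-,\g^0,\g^+$, the bundle $E^c:=\g^0$ — and likewise the weak bundles $\g^-\oplus\g^0$ and $\g^+\oplus\g^0$ — is $Dg_t^\tau$-invariant. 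For the strong directions I would set $E^s_\tau:=\{v+\beta^s(x,v)X:v\in\g^-\}$ with $\beta^s(x,v):=\tau(x)^{-1}\int_0^\infty D\tau(g_ux)[e^{u\,\mathrm{ad}X}v]\,du$; the integral converges absolutely and uniformly in $x$ because $e^{u\,\mathrm{ad}X}v$ decays exponentially for $v\in\g^-$, so $\beta^s$ is a fibrewise-linear smooth bundle map $\g^-\to\R X$. Using $D\tau(g_ux)[X]=\tfrac{d}{du}\tau(g_ux)$ and the change of variables $u\mapsto u-\alpha$ one checks the cocycle identity $Dg_t^\tau\big(v+\beta^s(x,v)X\big)=e^{\alpha\,\mathrm{ad}X}v+\beta^s\big(g_\alpha x,\,e^{\alpha\,\mathrm{ad}X}v\big)X$, so $E^s_\tau$ is invariant; the unstable bundle $E^u_\tau$ is built symmetrically from $\int_0^\infty D\tau(g_{-u}x)[e^{-u\,\mathrm{ad}X}v]\,du$ over $\g^+$. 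Being graphs of bounded maps over the compact unit spheres of $\g^\mp$, the bundles satisfy $E^s_\tau\oplus E^c\oplus E^u_\tau=T(G/\Gamma)$ uniformly transversally.

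Then I would estimate the rates. Since $0<\tau_{\min}\le\tau\le\tau_{\max}$, we have $t/\tau_{\max}\le\alpha(x,t)\le t/\tau_{\min}$, whence on $E^s_\tau$ the vector $Dg_t^\tau(v+\beta^s(x,v)X)=e^{\alpha\,\mathrm{ad}X}v+\beta^s(g_\alpha x,e^{\alpha\,\mathrm{ad}X}v)X$ has norm $\le Ce^{-ct}$ uniformly in $x$ (and symmetrically $E^u_\tau$ expands uniformly exponentially), while on $E^c$ the displayed formula gives $\|Dg_t^\tau|_{E^c}\|\le C(1+|t|)^{d}$ and conorm $\ge c(1+|t|)^{-d}$, the extra $\R X$-term contributing only another polynomial factor. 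Hence for the time-$t_0$ map $g_{t_0}^\tau$ with $t_0$ large the domination inequalities $\|Dg_{t_0}^\tau v^s\|<\|Dg_{t_0}^\tau v^c\|<\|Dg_{t_0}^\tau v^u\|$ hold on unit vectors, and with $\nu,\hat\nu\le Ce^{-ct_0}$, $\gamma\hat\gamma\ge c(1+t_0)^{-2d}$ the center-bunching inequalities $\nu<\gamma\hat\gamma$ and $\hat\nu<\gamma\hat\gamma$ hold as well, because an exponential beats any polynomial; passing from the time-one map to $g_{t_0}^\tau$ is harmless since both share the strong and center foliations of the flow. I expect the main obstacle to be exactly this last passage: recognizing that a non-semisimple center (nonzero nilpotent part of $\mathrm{ad}X|_{\g^0}$) is genuinely non-conformal with polynomially drifting distortion, so center bunching is unavailable for short times and is recovered only after flowing long enough, while simultaneously one must check that $E^s_\tau$ and $E^u_\tau$ are smooth in the base point with all the above estimates uniform over $G/\Gamma$.
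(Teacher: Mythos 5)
Your proposal is correct, and the core idea --- that the new strong bundles are graphs of bounded linear maps (defined by an absolutely convergent integral of $D\tau$ along forward, resp.\ backward, orbits) over the old strong bundles, with the flow direction supplying the offset --- is exactly the same mechanism the paper uses in Propositions \ref{inv-p} and \ref{p-split}. The route, however, is genuinely different in flavor. You trivialize $T(G/\Gamma)$ by right-invariant vector fields so that $Dg_t$ becomes the constant cocycle $e^{t\,\mathrm{ad}X}$, and you read off the invariant splitting, the cocycle identity for $E^s_\tau$, and the rate estimates by direct computation in $\g=\g^-\oplus\g^0\oplus\g^+$. This buys you a sharper center estimate: polynomial growth $(1+|t|)^{d}$ (degraded to $(1+|t|)^{d+1}$ after the time change, matching the paper's remark), so that center bunching for large $t_0$ falls out from ``exponential beats polynomial.'' The paper instead proves a general Theorem \ref{th-ce} --- valid for \emph{any} uniformly partially hyperbolic, uniformly center-bunched flow, not just homogeneous ones --- giving pointwise rates of the form $e^{\pm\alpha(x,T)\cdot(\text{const})+o(T)}$, and then specializes to the homogeneous case by importing from \cite{ps} an adapted metric in which the center grows at most like $(1+\epsilon)^t$. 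Your version is more explicit and self-contained for the algebraic setting but does not yield the general statement of Theorem \ref{th-ce}; the paper's version is more abstract but portable. One small point of bookkeeping: your sign convention for $\beta^s(x,v)$ agrees with a careful implicit differentiation of $\int_0^{T_x(y)}\tau(g_r y)\,dr=-\beta^s(x,y)$, and the cocycle identity you assert for the graph $E^s_\tau$ checks out by exactly the change of variables you indicate; this corresponds to the paper's $L^s_x$ in Proposition \ref{p-split} (up to a harmless sign in the paper's displayed formula). Everything else --- the uniform comparability of $\alpha(x,t)$ with $t$, the uniform transversality of the graphs, and the passage from a large time-$t_0$ map back to all nonzero times --- is handled correctly and matches the paper's reasoning.
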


Then we build a correspondence between the stable/unstable manifolds of $g_t$ and those of $g^\tau_t$. By studying how the accessibility classes change under time change, we can  show:

\begin{theorem}\label{main-a}
Under all the assumptions of Theorem \ref{main}, the time change $g^\tau_t$ is accessible.
\end{theorem}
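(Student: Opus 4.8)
The plan is to transport the accessibility of $g_t$ across the time change, the key device being a precise correspondence between the strong invariant manifolds of $g_t$ and those of $g^\tau_t$. Since $g_t$ and $g^\tau_t$ have the same oriented orbits they have the same orbit foliation, and combined with the derivative computation behind Theorem~\ref{m-c-b} this yields that they also share their center--stable and center--unstable foliations, $\mathcal W^{cs}_\tau=\mathcal W^{cs}$ and $\mathcal W^{cu}_\tau=\mathcal W^{cu}$. Inside a common center--stable leaf $W^{cs}(x)$, the strong stable leaf $W^{ss}_\tau(x)$ of $g^\tau_t$ has the same dimension as $W^{ss}(x)$, is transverse to the $g_t$-orbits, and is a graph over $W^{ss}(x)$: there is a continuous $\rho\colon W^{ss}(x)\to\R$ with $\rho(x)=0$ and $W^{ss}_\tau(x)=\{g_{\rho(y)}(y):y\in W^{ss}(x)\}$ (the familiar picture for time changes of Anosov flows, with $\rho$ the limit of the difference of the two time reparametrizations), and symmetrically for the unstable leaves. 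Thus a move inside a strong stable (resp.\ unstable) leaf of $g_t$ from $y$ to $y'$ is matched, via $\rho$, by a move inside a strong stable (resp.\ unstable) leaf of $g^\tau_t$ from $y$ to $g_{s}(y')$ for a controlled $s=s(y,y')$ with $s(y,y)=0$; the required continuity and estimates rest only on the smoothness of $\tau$ and the uniform hyperbolicity already established for Theorem~\ref{m-c-b}.

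Concatenating this correspondence along an $su$-path of $g_t$ from $x$ to $z$ produces an $su$-path of $g^\tau_t$ from $x$ to $g_{t(z)}(z)$, where $t(z)$ records the accumulated flow shift. Since $g_t$ is accessible, every $z\in M$ is reached, so the accessibility class $\mathrm{AC}_\tau(x)$ of $g^\tau_t$ meets every $g^\tau_t$-orbit; equivalently $\bigcup_{\rho\in\R}g^\tau_\rho(\mathrm{AC}_\tau(x))=M$. Because $g^\tau_\rho$ permutes the strong stable and unstable leaves of $g^\tau_t$, it permutes $su$-paths, whence $g^\tau_\rho(\mathrm{AC}_\tau(x))=\mathrm{AC}_\tau(g^\tau_\rho x)$. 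Consequently every accessibility class of $g^\tau_t$ is a flow-translate of $\mathrm{AC}_\tau(x)$, so if $\mathrm{AC}_\tau(x)$ is open then all accessibility classes are open, they partition the connected manifold $M$ into clopen sets, and therefore $\mathrm{AC}_\tau(x)=M$.

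It remains to prove that $\mathrm{AC}_\tau(x)$ is open, and this is the step I expect to be the main obstacle: it is exactly the place where a time change can fail -- a constant suspension of a hyperbolic toral automorphism is ``$su$-and-flow accessible'' but not accessible -- and the point is that the hypothesis that $g_t$ itself be accessible (not merely accessible after flowing) is what rules this out. Fix $z_0\in\mathrm{AC}_\tau(x)$. For the homogeneous flow $g_t$ the strong leaves are orbits $U^-z$, $U^+z$ of the horospherical subgroups of $X$, and accessibility means $\langle U^+,U^-\rangle=G$, so $\mathfrak u^++\mathfrak u^-$ generates $\mathfrak g$ and for $k$ large the product map $(U^-\times U^+)^k\to G$ is a submersion near the identity; hence the $g_t$-$su$-paths from $z_0$ built out of $2k$ horospherical legs reach, smoothly in their parameters $\xi$, an open neighborhood of $z_0$. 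Transporting these paths through the correspondence gives $g^\tau_t$-$su$-paths from $z_0$ to points $g_{t(\xi)}(P(\xi)z_0)$, with $P$ and $t$ smooth in $\xi$ and $t\equiv0$ when $P\equiv e$, all lying in $\mathrm{AC}_\tau(z_0)=\mathrm{AC}_\tau(x)$. Writing this as the composition of $\xi\mapsto(P(\xi)z_0,t(\xi))$ with the flow map $(q,s)\mapsto g_s(q)$, which is a submersion onto $M$ whose kernel is the flow direction, the image is a neighborhood of $z_0$ as soon as the transported family is transverse to the flow direction there; establishing this transversality -- equivalently, that the $g^\tau_t$-$su$-holonomies move at first order along the $g^\tau_t$-orbit through $z_0$ -- is the delicate point, which I would handle using the accessibility of $g_t$ together with the homogeneous structure (and, if necessary, by adjusting the chosen family of $su$-paths). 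Granting it, $\mathrm{AC}_\tau(x)$ is open, hence equals $M$, and $g^\tau_t$ is accessible.
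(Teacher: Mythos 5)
Your setup faithfully reconstructs the paper's preparatory machinery: the graph description $W^{ss}_\tau(x)=\{g_{\rho(y)}y:y\in W^{ss}(x)\}$ is Proposition~\ref{inv-p}, the transport of $su$-paths by accumulated flow shifts is Proposition~\ref{key-p-2}, and the observation that $\mathrm{AC}_\tau(x)$ meets every orbit and is permuted by the flow is Lemma~\ref{le-every} together with the dichotomy underlying Proposition~\ref{key-p-a}. So up to the point where you correctly isolate what must be shown --- that $\mathrm{AC}_\tau(x)$ is open, i.e.\ that the $su$-holonomies of $g^\tau_t$ actually move in the flow direction --- you are on the same track as the paper.

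However, the crucial step is exactly the one you defer. You write that the transversality of the transported family to the flow direction ``is the delicate point, which I would handle using the accessibility of $g_t$ together with the homogeneous structure (and, if necessary, by adjusting the chosen family of $su$-paths),'' and then grant it. But this is precisely where a time change can fail (your own suspension example), and it cannot be resolved by a purely local submersion count: the first-order contribution of a closed $su$-loop in the flow direction is its periodic cycle functional $PCF(\tau)$, and there is no local reason this cannot vanish identically, which is exactly the degenerate case that would leave $\mathrm{AC}_\tau(x)$ codimension one. The paper closes this gap with a genuinely global argument (Propositions~\ref{key-p-7}--\ref{key-p-9} and the proof of Theorem~\ref{new}): one fixes a homotopically-controlled $su$-path $\mathcal C_x$ from $x$ to $g_r(x)$ built from fixed elements $h_i\in G^{s/u}$, left-translates it to $\mathcal C_y$ for all $y$, and then shows that non-accessibility of $g^\tau_t$ would force $PCF_{\mathcal C_y}(\tau)=\int_0^r\tau(g_ty)\,dt$ for every $y$. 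Integrating this identity against the Haar measure $m$ gives $0$ on the left (bi-invariance of $m$ makes the PCF average out to zero) and $r\tau_0>0$ on the right, a contradiction. Your proposal contains no analogue of this averaging over $G/\Gamma$, and without it the ``transversality at one point'' you are asking for is not established; as written, the proof is incomplete at its central step.
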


By Theorem \ref{m-c-b} and \ref{main-a}, and a celebrated result of Burns-Wilkinson \cite{BW} (Theorem \ref{bw}), we thus obtain the K property.

We can push Theorem \ref{main-a} forward, and obtain a stronger result, that the time change is stably accessible. This is enough (by Theorem \ref{bw}) to conclude that the time change is stably ergodic. More precisely, we have

\begin{theorem}\label{main-stable}
Under all the assumptions of Theorem \ref{main}, the time-$t_0$ map $g^\tau_{t_0}$ is stably ergodic (and in fact, stably K) for any $t_0\neq 0$.
\end{theorem}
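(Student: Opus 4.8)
The plan is to obtain Theorem \ref{main-stable} from the Burns--Wilkinson criterion (Theorem \ref{bw}) applied not only to $g^\tau_{t_0}$ but to all of its $C^1$-small volume-preserving perturbations; so I must show that each of the three hypotheses of that criterion --- pointwise partial hyperbolicity, center bunching, and accessibility --- is $C^1$-robust around $g^\tau_{t_0}$. For the first two this is essentially free. By Theorem \ref{m-c-b} the flow $g^\tau_t$, and therefore the time-$t_0$ map $g^\tau_{t_0}$ (whose center bundle is the center bundle $E^c$ of the flow, containing the flow direction), is partially hyperbolic and center bunched; and both properties are defined by strict inequalities between quantities that depend continuously on the $1$-jet of the map and on the (continuously varying) invariant bundles, hence are open in $\mathrm{Diff}^1(M)$. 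Thus there is a $C^1$-neighborhood $\mathcal U_0$ of $g^\tau_{t_0}$ consisting of partially hyperbolic, center bunched diffeomorphisms.

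The real content is to shrink $\mathcal U_0$ to a neighborhood $\mathcal U$ of \emph{accessible} diffeomorphisms, i.e. to show that $g^\tau_{t_0}$ is stably accessible. Here I would upgrade the proof of Theorem \ref{main-a} to a quantitative, non-degenerate form. That proof derives accessibility of $g^\tau_{t_0}$ from the algebraic picture: the strong stable and strong unstable manifolds of $g^\tau_t$ coincide as sets with those of the homogeneous flow $g_t$, and for semisimple $G$ without compact factors the stable and unstable subalgebras $\g^{+},\g^{-}$ of $\g$ together with their iterated Lie brackets span $\g$ (Chow's condition), which forces the homogeneous $su$-paths to fill $M$. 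I would use this to produce, for a fixed base point $p_0\in M$, a finite concatenation of strong-stable and strong-unstable legs depending on finitely many real parameters, i.e. an endpoint map $\Phi\colon V\to M$ with $V$ a neighborhood of $0$ in $\R^{\dim M}$, $\Phi(0)=p_0$, whose differential $D\Phi(0)$ is \emph{surjective}. Such a submersive $su$-configuration is robust: for $f$ $C^1$-close to $g^\tau_{t_0}$ the strong stable and strong unstable manifolds vary continuously in $C^1$ on compact pieces (with uniform sizes and contraction rates coming from $\mathcal U_0$), so the corresponding endpoint map $\Phi_f$ is $C^1$-close to $\Phi$ and still a submersion at $0$; hence the $f$-accessibility class of $p_0$ contains a neighborhood of $p_0$. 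Covering $M$ by finitely many such accessibility charts, each transported to its location along a fixed $su$-path, and noting that all of these data vary continuously with $f$, one concludes that for $f$ in a possibly smaller neighborhood $\mathcal U\subset\mathcal U_0$ the $f$-accessibility class of $p_0$ is all of $M$, i.e. every $f\in\mathcal U$ is accessible.

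Granting $\mathcal U$, the conclusion is immediate. Any $f\in\mathcal U$ that is $C^2$ and volume-preserving is partially hyperbolic, center bunched and accessible, so by Theorem \ref{bw} it is ergodic with respect to its invariant volume, in fact K. Since $g^\tau_{t_0}$ is itself $C^\infty$ and preserves $m^\tau$, this is exactly the statement that $g^\tau_{t_0}$ is stably ergodic, and stably K, for every $t_0\neq 0$.

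The main obstacle is the second paragraph, and within it the step from \emph{accessible} to \emph{robustly, non-degenerately accessible}: the Chow condition on $\g^{+}+\g^{-}$ gives accessibility, but to obtain an $su$-configuration whose endpoint map is a genuine submersion one must select enough legs to realize the required iterated brackets simultaneously, and then transport this construction through the time-change correspondence between the (un)stable manifolds of $g_t$ and those of $g^\tau_t$ while keeping all leaf sizes and continuity estimates uniform. One must also check that $C^1$-continuity of the strong (un)stable manifolds is strong enough both to preserve the submersion and to make the finite covering argument go through; this is precisely where the uniform hyperbolicity and center-bunching estimates inherited from $\mathcal U_0$ enter.
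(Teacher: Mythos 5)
Your overall strategy --- invoke Theorem~\ref{bw} and show that partial hyperbolicity, center bunching, and accessibility are all $C^1$-open near $g^\tau_{t_0}$ --- is the right frame, and your treatment of the first two conditions (strict inequalities on continuously-varying bundles, hence $C^1$-open) is fine. The paper too reduces everything to robust accessibility and then applies Burns--Wilkinson. But there is a genuine gap in the accessibility half, and it starts with a factual error: you assert that ``the strong stable and strong unstable manifolds of $g^\tau_t$ coincide as sets with those of the homogeneous flow $g_t$.'' They do not. Proposition~\ref{inv-p} shows that $W^s_\tau(x)=\{g^\tau_{-\beta^s(x,y)}y:y\in W^s(x)\}$ is a \emph{graph over} $W^s(x)$, sheared in the flow direction by the asymptotic integral $\beta^s(x,\cdot)$ of $\tau$; it is never the same set unless $\tau$ is a coboundary. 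This is not a cosmetic point --- it is exactly what makes the whole problem nontrivial. If the leaves coincided, stable accessibility of $g_t$ would transfer to $g^\tau_t$ for free.

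Because of that shear, when you transport a non-degenerate $su$-configuration for $g_t$ (which you correctly produce from the Chow/bracket-generating condition) through the time-change correspondence $\Phi_x$, every endpoint gets translated along the flow orbit by a periodic-cycle-functional amount, and there is no a priori reason the resulting endpoint map $\Phi_f$ is still a submersion at $0$: the shear could collapse the flow-direction component of your surjective differential. You explicitly flag this transport as ``the main obstacle,'' but you never resolve it, so what you have is a plan, not a proof. The paper's route does resolve it: it phrases stable accessibility via \emph{centre engulfing} \cite{BPW,BW-1} (an open condition which, together with accessibility, implies stable accessibility by \cite[Corollary 5.3]{BW-1}), transports the engulfing family for $g_t$ (Proposition~\ref{pp-s}, from \cite{BP,ps1,ps}) through $\Phi_x$, and then --- this is the step your proposal is missing --- uses an averaged PCF computation, exactly as in the proof of Theorem~\ref{new}, to produce a point $x$ where inequality~\eqref{eq-neq} holds, guaranteeing the sheared family still wraps around $x$ with nonzero degree (conditions ({\bf a4}), ({\bf a5})), and patches with an extra $su$-leg when $x$ lands outside the shifted orbit segment $\mathcal O_2$. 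Without some analogue of that PCF non-degeneracy argument, the submersion claim for $g^\tau_{t_0}$ is unsupported.
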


Let us mention that, 
stable ergodicity is well studied for homogeneous flows or algebraic maps, for certain toral automorphism \cite{fe}, for geodesic flows, or compact group extensions of geodesic flow \cite{BW-1,bm}, and for general affine maps on homogeneous spaces \cite{ps}. Theorem \ref{main-stable} can be considered a generalization of some results in \cite{ps}. It also shows that for these homogeneous flows, the property of 
stable ergodicity persists under smooth time change. 

The accessibility assumption in Theorem \ref{main} and \ref{main-stable} is technical. If the flow $g_t$ is not accessible but still ergodic, then it is essentially accessible. When the orbit foliation is a sub-foliation of the accessibility class of $g_t$, then arguing similarly to our proof, we can obtain the essential accessibility of the time change (hence the K property). However our argument fails in other cases, in which the accessibility class does not necessarily remain the same after the time change.  Nevertheless, we believe that the ergodic properties (such as K property, Bernoulli property, and stable ergodicity) persist under smooth time  changes.

The structure of this paper is as follows. In \S \ref{S-P}, we present some preliminaries including a few relevant definitions. In \S \ref{S-PC}, we show partial hyperbolicity of the time change and complete the proof of Theorem \ref{m-c-b}. Here our result applies to a much more general setting. In \S \ref{S-PA}, we obtain some preparatory results and give some sufficient conditions of accessibility. In \S \ref{S-A}, we prove Theorem \ref{main-a} and hence Theorem \ref{main}, where the homogeneous structure plays an important role (see the proof of Theorem \ref{new}). We can also obtain some results by using the property of fundamental group of the manifold. In \S \ref{S-C}, we verify centre engulfing of the system, thus prove Theorem \ref{main-stable}.

\textbf{Acknowledgement:} The author is deeply indebted to D. Dolgopyat for his careful read and useful comments. He also thanks G. Forni and A. Kanigowski for their various help during the preparation of this work.

\section{Preliminaries}
\label{S-P}

\subsection{Partial hyperbolicity}
\label{SSPhD}

Let $f$ be a diffeomorphism of a compact manifold $M$, $f$ is called {\em pointwise partially hyperbolic} if there is 
a $Df$ invariant splitting 
$$TM=E^s\oplus E^c \oplus E^u $$
and there exist a Riemannian metric and positive continuous functions
$\nu, \hnu, \gamma, \hgamma$ such that 
$$ \nu, \hnu<1, \quad \nu<\gamma<\hgamma^{-1}<\nu^{-1} $$
and
$$ \|Df {\bf v}\|\leq \nu(x)\|{\bf v}\| \text{ for } {\bf v}\in E^s(x) ;$$
$$ \gamma(x)\|{\bf v}\|\leq \|Df {\bf v}\|\leq \hgamma^{-1} (x)\|{\bf v}\| \text{ for } {\bf v}\in E^c(x) ;$$
$$ \hnu^{-1} (x)\|{\bf v}\|\leq \|Df {\bf v}\|  \text{ for } {\bf v}\in E^u(x). $$
$f$  is called {\em pointwise center bunched} if additioanlly
$$ \nu<\gamma \hgamma, \quad \hnu<\gamma \hgamma. $$

Let $g_t$ be a flow a compact manifold $M.$ $g$ is called {\em uniformly partially hyperbolic} if there is 
a $Dg_t$ invariant splitting 
$$TM=E^s\oplus E^c \oplus E^u $$
and there exist a Riemannian metric and positive numbers
$a, b, A, B$ and $C$ such that 
$$ a<A, \quad b<B$$ 
and for all $t>0$,
$$ \|Dg_t {\bf v}\|\leq Ce^{-Bt} \|{\bf v}\| \text{ for } {\bf v}\in E^s ;$$
$$ C^{-1}e^{-at} \|{\bf v}\|\leq \|Dg_t {\bf v}\|\leq Ce^{bt} \|{\bf v}\| \text{ for } v\in E^c ;$$
$$C^{-1} e^{At} \|{\bf v}\|\leq \|Dg_t {\bf v}\|  \text{ for } {\bf v}\in E^u. $$
$g_t$  is called {\em uniformly center bunched} if additioanlly
$$ a+b<A, \quad a+b<B. $$

It is well known that the distributions $E^s$ and $E^u$ are integrable. 
We  let $\mathcal F^s$ and $\mathcal F^u$ be the corresponding foliations.
In general the distribution $E^c$ is not integrable.

\begin{definition}
A partially hyperbolic diffeomorphism $f$ is dynamically coherent if the distributions $E^c$, $E^s\oplus E^c$ and $E^c\oplus E^u$ are uniquely integrable.
\end{definition}

An $su$-path is a concatenation of finitely many subpaths, each of which lies entirely in a single leaf of $\mathcal F^s$ or a single leaf of $\mathcal F^u$. An $su$-cycle is an $su$-path beginning and ending at the same point.

\begin{definition}
We say that a partially hyperbolic diffeomorphism $f:M\to M$ is accessible if any point in M can be connected from any other point via an $su$-path. 
\end{definition}

The accessibility class $AC(x)$ of $x\in M$ is the set of all $y\in M$ that can be reached from $x$ via an $su$-path. Accessibility means that there is one accessibility class, which contains all points. A weaker notion is essential accessibility defined below.

\begin{definition}
We say that a partially hyperbolic diffeomorphism $f:M\to M$ is essentially accessible if for a volume measure, any measurable set that is a union of entire accessibility classes has either full measure or zero measure.
\end{definition}

We may also need the notion of local accessibility.

\begin{definition}\label{def-local-A}
A partially hyperbolic diffeomorphism is said to be locally accessible if for any $\epsilon>0$, there is $\delta=\delta(\epsilon)$ with $\delta\to 0$ as $\epsilon\to 0$, such that for any $x,y$ with $d(x,y)<\epsilon$, there is an $su$-path in the $\delta$
neighborhood
 of $x$ that connects $x$ and $y$.
\end{definition}

\subsection{Ergodicity and K property}

\begin{definition}
We say, $(T,X,\mu)$ is K (or has the K property) if the $\sigma$-algebra generated by $\{A:h_\mu(T,\{A,X\backslash A\})=0\}$ is equal to $\{\emptyset, X\}$ $\mu$ almost everywhere.
\end{definition}

By Rokhlin-Sinai theorem \cite{RS}, the above is equivalent to the system not 
having non-trivial zero entropy factors. It is a well-known fact that K implies mixing of all orders.

Our strategy to prove the K property is to show accessibility and then apply the following celebrated result.

\begin{theorem}[\cite{BW}]\label{bw}
Suppose a diffeomorphism $f$ is $C^2$, volume preserving, pointwise partially hyperbolic and pointwise center bunched. If $f$ is essentially accessible, then $f$ is ergodic, and in fact has the K property.
\end{theorem}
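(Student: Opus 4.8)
We sketch how one would prove Theorem~\ref{bw}, which is the main theorem of Burns--Wilkinson \cite{BW}; the argument is a partially hyperbolic version of the Hopf argument in which the pointwise center bunching hypothesis supplies exactly the geometric control that makes it run. The first step is to build, in a small neighborhood of each point $p$, locally $f$-invariant ``fake'' foliations $\widehat{\mathcal F}^{s}_{p},\widehat{\mathcal F}^{u}_{p},\widehat{\mathcal F}^{c}_{p},\widehat{\mathcal F}^{cs}_{p},\widehat{\mathcal F}^{cu}_{p}$ tangent to the corresponding (nearly) invariant bundles, by a graph-transform argument applied to the cocycle $Df$; these are uniformly $C^{1}$, mutually transverse, and serve as a local substitute for global dynamical coherence. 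The pointwise center bunching inequalities $\nu<\gamma\hgamma$ and $\hnu<\gamma\hgamma$, read along orbits, are used precisely here: they force the contraction along $E^{s}$ and the expansion along $E^{u}$ to dominate the nonconformality of $Df|_{E^{c}}$ strongly enough that the genuine stable and unstable holonomies between fake center leaves are well defined and H\"older.

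Next I would introduce, for each $x$ and each $n\ge 0$, the nested family of \emph{julienne} neighborhoods of $x$: a center julienne obtained by pulling back a small center ball at $f^{n}x$ under $f^{n}$, and its thickenings $J^{cs}_{n}(x)$, $J^{cu}_{n}(x)$, $J_{n}(x)$ obtained by saturating inside stable, unstable, and both leaves. Using center bunching one proves: (i) an engulfing property---comparable juliennes based at nearby points contain one another (compare the centre engulfing verified in \S\ref{S-C}), so that the $J_{n}(x)$ form a Vitali-type differentiation basis; (ii) $m(J_{n+1}(x))/m(J_{n}(x))$ is uniformly bounded above and below; (iii) the unstable holonomy maps center-stable juliennes to center-stable juliennes with uniformly bounded eccentricity, and symmetrically for the stable holonomy and center-unstable juliennes. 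From (i)--(ii), $m$-a.e.\ point is a \emph{julienne density point} of any prescribed measurable set $A$, i.e.\ $m(J_{n}(x)\cap A)/m(J_{n}(x))\to 1$ for a.e.\ $x\in A$.

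With this in hand, the ergodicity half goes by the Hopf scheme: given an $f$-invariant measurable set $A$, let $\widehat A$ be its set of julienne density points, so $m(A\triangle\widehat A)=0$; combining absolute continuity of $\mathcal F^{s}$ and $\mathcal F^{u}$ (with bounded Jacobians, standard for $C^{2}$ partially hyperbolic maps), property (iii), and Fubini, one shows $\widehat A$ is essentially $su$-saturated, hence mod $0$ a union of accessibility classes, whence essential accessibility forces $m(A)\in\{0,m(M)\}$. To upgrade to the K property one runs the same machine on the Pinsker $\sigma$-algebra $\Pi(f)$: by the entropy theory of $C^{2}$ volume-preserving partially hyperbolic maps (as in the Anosov case), the atoms of $\Pi(f)$ are, mod $0$, unions of stable leaves and unions of unstable leaves; applying the julienne-density/essential-saturation argument to elements of $\Pi(f)$ shows each is essentially $su$-saturated, and essential accessibility then forces $\Pi(f)=\{\emptyset,M\}$ mod $0$, i.e.\ $f$ is K.

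The step I expect to be hardest is (iii): that the unstable (resp.\ stable) holonomy carries center-stable (resp.\ center-unstable) juliennes to comparable juliennes with uniformly bounded distortion. This is exactly the point at which the pointwise center bunching hypothesis is indispensable---without it the fake center leaves and their holonomies are not controlled finely enough to respect the julienne geometry---and it is the technical core of \cite{BW}, the place where the theorem improves on earlier results that assumed global dynamical coherence or stronger bunching.
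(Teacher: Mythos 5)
The statement you were asked to prove is not proved in the paper at all: it is Theorem~\ref{bw}, a black-box citation of the Burns--Wilkinson Annals paper \cite{BW}, and it is used here only as an external input (applied in \S\ref{S-A} and \S\ref{S-C} after accessibility and center bunching of the time change have been verified). So there is no ``paper's own proof'' to compare against.

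As a sketch of the Burns--Wilkinson argument itself, what you wrote is essentially faithful and correctly structured. You identify the right ingredients in the right order: the locally invariant fake foliations built by graph transform as a substitute for global dynamical coherence; the role of the pointwise bunching inequalities $\nu<\gamma\hgamma$, $\hnu<\gamma\hgamma$ in controlling the stable/unstable holonomies between fake center plaques; the julienne families and the three facts (engulfing/Vitali property, bounded doubling, holonomy quasi-preservation of juliennes) that make julienne density points a usable differentiation basis; the Hopf argument using absolute continuity of $\mathcal F^{s}$ and $\mathcal F^{u}$; and, for the K upgrade, the fact that elements of the Pinsker $\sigma$-algebra are essentially $\mathcal F^{s}$- and $\mathcal F^{u}$-saturated, after which the same julienne density-point machinery (not just the bare saturations, which individually do not give $su$-saturation) plus essential accessibility forces triviality. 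You also correctly flag the holonomy--julienne compatibility step as the technical core and the precise place where pointwise bunching replaces the stronger hypotheses of Pugh--Shub. The one place to be a little more careful in a full write-up is the provenance of the Pinsker saturation claim: for $C^{2}$ volume-preserving partially hyperbolic maps this is not a one-line Anosov-style remark but a theorem coming from Pesin/Ledrappier--Young entropy theory applied to the uniformly stable and unstable laminations, and \cite{BW} cites it rather than reproves it; stating it as ``as in the Anosov case'' slightly undersells that dependence. That is a matter of attribution, not a gap in the logic.
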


\begin{definition}
We say that, a diffeomorphism $f\in \text{Diff}^2(M,\mu)$ is stably ergodic (K), if there is an open neighborhood $\mathcal U\subset \text{Diff}^2(M,\mu)$ of $f$, such that every $g\in \mathcal U$ is ergodic (K) w.r.t. $\mu$.
\end{definition}

\subsection{Time change of a flow}

Let $(\phi_t,S,\mu)$ be a measurable flow, $\tau:S\to \mathbb R$ be a positive function. The time-changed flow $\phi_t^\tau$ is defined by $\phi_t^\tau(x):=\phi_{\alpha(x,t)}(x)$, where $\alpha(x,t)$ satisfies $$\int^{\alpha(x,t)}_0\tau(\phi_s(x))d s=t.$$ The time-changed flow preserves a new measure $\mu^\tau$ satisfying $\frac{d\mu^\tau}{d\mu}=\frac{\tau}{\int\tau d\mu}$. It is easy to check that $\alpha(x,t)$ is a cocycle over $\phi_t^\tau$, namely for all $s,t$ and almost $x\in S$, $$\alpha(x,t+s)=\alpha(x,s)+\alpha(\phi^\tau_s(x),t).$$ Let $v(x,t):=\int^t_0\tau(\phi_s(x))ds$, then $v$ is a cocycle over $\phi_s$, and $$v(x,\alpha(x,t))=\alpha(x,v(x,t))=t.$$ Suppose now we have another time-changed flow $\phi_t^\kappa$ with $w(x,t):=\int^t_0\kappa(\phi_s(x))ds$, if $v(x,t)-w(x,t)=\xi(x)-\xi(\phi_t(x))$ for a measurable function $\xi$, then $\phi_t^\tau$ is conjugate to $\phi_t^\kappa$ via the map $h:x\mapsto \phi_{\xi(x)}^\kappa(x)$.

If $\phi_t$ is a smooth flow generated by a vector field $  X$ and $\tau$ is a smooth function, then $\phi_t^\tau$ is a smooth flow generated by $\tau^{-1} X$.

\section{Partial hyperbolicity and center bunching of the time change}
\label{S-PC}
In this section, we will show that the time change is partially hyperbolic and center bunched in a much more general setting. 

In order to verify partial hyperbolicity, we need to find an invariant splitting first. It is not easy to obtain the splitting directly by using algebraic methods. 
Nevertheless, we have a very nice observation that the new stable (respectively unstable) manifold is in fact a graph of the original stable (respectively unstable) manifold towards the flow direction. This idea enables us to construct stable and unstable manifolds, and thus obtain an invariant splitting, which is a priori only H\"older continuous. The low regularity of the splitting is also a reason that we could not succeed by applying algebraic method such as Lie bracket.

Once we obtain the splitting, we compute the push-forwards of the vector fields by the time change. Then we can show pointwise partial hyperbolicity, as well as center bunching. 

We remark that, the verification only for large $t$ is not a problem for us to apply Theorem \ref{bw}, because stable ergodicity (and stably K) established for large $t$ will also hold for small $t$.

\subsection{New stable and unstable manifolds for time changes}

Assume that $g_t$ is a partially hyperbolic flow on a compact manifold $M$, and $\tau:M\to \R$ is a positive smooth function. Let $g_t^\tau$ be the time change.

Let $W^{s/u}$ be the stable (unstable) manifold for $g_t$. The following important observation reflects a closed connection between the old stable/unstable manifold and the new stable/unstable manifold.

\begin{proposition}\label{inv-p}
The stable manifold of $g^\tau_t$ is given by a graph of a smooth map from the stable manifold of $g_t$ to the flow orbit. It is also the case for the unstable manifold.
\end{proposition}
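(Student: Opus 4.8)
The plan is to identify, for a point $x\in M$, the stable manifold $W^{\tau,s}(x)$ of the time-changed flow $g^\tau_t$ explicitly as the image of the old stable manifold $W^s(x)$ under a map that pushes each point along the $g_t$-orbit by an amount depending smoothly on that point. Concretely, I would first recall the conjugacy-type relation between $g_t$ and $g^\tau_t$: one has $g^\tau_t(x)=g_{\alpha(x,t)}(x)$ with $\int_0^{\alpha(x,t)}\tau(g_s(x))\,ds=t$, and $\alpha(\cdot,t)$ is smooth in both arguments since $\tau$ is smooth and positive and $M$ is compact. Now take $y\in W^s(x)$, so $d(g_t(x),g_t(y))\to 0$ exponentially as $t\to+\infty$. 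I would look for a function $\beta(x,y)$ (smooth in $y$ along $W^s(x)$) such that the point $z:=g_{\beta(x,y)}(y)$ lies on $W^{\tau,s}(x)$, i.e.\ $d\big(g^\tau_t(x),g^\tau_t(z)\big)\to 0$. Comparing the two flows, the orbit of $z$ under $g^\tau$ shadows the orbit of $y$ under $g$ up to a time shift, and $g^\tau_t(x)$ shadows $g_t(x)$ up to a time shift; the requirement that the two time shifts asymptotically agree forces
\[
\beta(x,y)=\int_0^\infty\big(\tau(g_s(x))-\tau(g_s(y))\big)\,ds .
\]
This integral converges because $g_s(x)$ and $g_s(y)$ converge together exponentially and $\tau$ is Lipschitz (indeed smooth), so the integrand decays exponentially; moreover the convergence is uniform and can be differentiated in $y$, giving smoothness of $\beta$ along $W^s(x)$.

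Next I would verify that $z=g_{\beta(x,y)}(y)$ really is forward-asymptotic to $x$ under $g^\tau_t$. The key computation is to track the time parametrization: writing $g^\tau_t(z)=g_{\sigma(t)}(z)$ and $g^\tau_t(x)=g_{\rho(t)}(x)$, one shows $\rho(t)-\big(\sigma(t)+\beta(x,y)\big)\to 0$ as $t\to\infty$, using the cocycle identity for $v(x,t)=\int_0^t\tau(g_s(x))\,ds$ together with the definition of $\beta$; hence $g^\tau_t(z)$ is a bounded-time reparametrization away from a point that converges to $g_{\rho(t)}(x)=g^\tau_t(x)$, and since the flow direction is uniformly transverse contributions and the reparametrization error tends to $0$, $d(g^\tau_t(x),g^\tau_t(z))\to 0$. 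Conversely, any point forward-asymptotic to $x$ under $g^\tau$ must, after being slid back along the orbit by the appropriate amount, be forward-asymptotic to $x$ under $g$, hence lie in $W^s(x)$; this shows the graph map is onto $W^{\tau,s}(x)$. The same argument with $s\to-\infty$ and the integral $\int_{-\infty}^0(\tau(g_s(x))-\tau(g_s(y)))\,ds$ handles the unstable manifold. Finally, smoothness of the graph map $y\mapsto g_{\beta(x,y)}(y)$ follows from smoothness of the flow, smoothness of $\tau$, and the smoothness and exponential convergence of $\beta$ established above.

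I expect the main obstacle to be the parametrization bookkeeping in the second step: carefully showing that the accumulated time-change discrepancy between the $g^\tau$-orbit of $z$ and that of $x$ converges to exactly $\beta(x,y)$ (not merely stays bounded), so that the two orbits genuinely synchronize rather than stay a fixed distance apart along the flow direction. This requires combining the exponential contraction of $d(g_s(x),g_s(y))$ with the cocycle relations for $v$ and $\alpha$ and controlling the resulting error terms uniformly in $x$. The convergence of the defining integral for $\beta$ and its differentiability in $y$ are comparatively routine once one invokes the uniform exponential estimates on $W^s$ coming from partial hyperbolicity and the compactness of $M$.
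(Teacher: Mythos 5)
Your overall strategy matches the paper's exactly: realize the new stable leaf as a sliding of the old one along the flow direction, with the amount of slide governed by the convergent integral $\beta^s(x,y)=\int_0^\infty(\tau(g_sx)-\tau(g_sy))\,ds$, then verify the asymptotics and the smoothness. However, the concrete formula you write down for the graph map is wrong, and the error lands precisely at the step you flag as ``the main obstacle.'' You set $z=g_{\beta(x,y)}(y)$, i.e.\ you slide $y$ by \emph{old-flow} time $\beta(x,y)$. Tracking the time parametrizations as you propose, with $z=g_c(y)$, one finds that $d(g^\tau_t x, g^\tau_t z)\to 0$ forces
\[
\int_0^{c}\tau(g_r(y))\,dr \;=\; -\beta^s(x,y),
\]
i.e.\ $c=\alpha(y,-\beta^s(x,y))$, or equivalently $z=g^\tau_{-\beta^s(x,y)}(y)$: the slide is by \emph{new-flow} time $-\beta^s(x,y)$, not by old-flow time $+\beta(x,y)$. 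With your choice $c=\beta(x,y)$ the cocycle computation gives instead $\alpha(x,t)-\alpha(z,t)-\beta(x,y)\to$ a generically nonzero quantity (one obtains $\beta(x,y)+\int_0^{\beta(x,y)}\tau(g_ry)\,dr=0$ as the compatibility condition, which fails unless $\beta\equiv 0$), so the two orbits stay a fixed distance apart along the flow direction and $z\notin W^{\tau,s}(x)$. Both the sign and the reparametrization through $\tau$ are off. Once you replace $g_{\beta(x,y)}$ by $g^\tau_{-\beta^s(x,y)}$ the rest of your argument (convergence of the integral, differentiability along $E^s$ using the exponential contraction, and the analogous formula with $\int_0^\infty(\tau(g_{-r}x)-\tau(g_{-r}y))\,dr$ for the unstable side) goes through and is essentially the paper's proof.
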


\begin{proof}
Fix a point $x\in M$. Let $y\in W^s(x)$. Consider $$\beta^s(x,y):=\int_0^\infty(\tau(g_rx)-\tau(g_ry))dr,$$it is clear  that $\beta^s$ is finite since $\tau(g_rx)-\tau(g_ry)$ decays exponentially. 

Now let $T_x(y)$ be given by $$\int_0^{T_x(y)}\tau(g_r(y))dr=-\beta^s(x,y).$$ 
Define a map $\Phi_x^s$ by $\Phi_x^s(y):=g_{T_x(y)}(y)=g^\tau_{-\beta^s(x,y)}(y)$. Let $W^s_\tau(x):=\{g^\tau_{-\beta^s(x,y)} y\}_{y\in W^s(x)}$.

We can show that $\{W^s_\tau(x):x\in M\}$ form an invariant foliation. Moreover, for $y\in W^s_\tau(x)$, $d(g_t^\tau x, g_t^\tau y)\to 0 $ as $t\to+\infty$ exponentially fast. Notice that $$d(g_t^\tau(x),g_t^\tau(\Phi_x^s(y)))=d(g_{\alpha(x,t)}(x),g_{\alpha(y,t-\beta^s(x,y))}(y)).$$
By the definition of $\alpha$, $$\int^{\alpha(x,t)}_0\tau(g_r(x))dr=t,\;\;\int^{\alpha(y,t-\beta^s(x,y))}_0\tau(g_r(y))dr=t-\beta^s(x,y).$$Therefore $$\int_0^{\alpha(x,t)}(\tau(g_rx)-\tau(g_ry))dr+\int_{\alpha(x,t)}^{\alpha(y,t-\beta^s(x,y))}\tau(g_r(y))dr=\beta^s(x,y),$$by taking limit and the definition of $\beta^s(\cdot,\cdot)$, $$\lim_{t\to\infty}\int_{\alpha(x,t)}^{\alpha(y,t-\beta^s(x,y))}\tau(g_r(y))dr=0.$$ Since $\tau>0$, $$\lim_{t\to\infty}(\alpha(x,t)-\alpha(y,t-\beta^s(x,y)))=0,$$ it follows that $d(g_t^\tau(x),g_t^\tau(\Phi_x^s(y)))\to 0,$ as $t\to\infty$. In fact, we can have $d(g_t^\tau x,g_t^\tau\Phi_x^s(y))$ decays exponentially for $y\in W^s(x)$. This is because by slightly modifying the above calculation, there exist positive constants $c,C$ such that for $t>0$ $$|\alpha(x,t)-\alpha(y,t-\beta^s(x,y))|\le Ce^{-ct},$$thus the exponential contraction follows easily by triangle inequality.

\begin{lemma}
$W^s_\tau(x)$ is a smooth sub-manifold.
\end{lemma}

\begin{proof}
Fix $x$, we need to check that the map $y\mapsto \beta^s(x,y)$ is smooth for $y\in W^s(x)$. However,
for ${\bf v}\in E^s(y)=T_y W^s(x)$ we have
$$ \partial_{\bf v} \beta^s(x,y)=-\lim_{T\to\infty} \partial_{\bf v} \tau_T(y)=-\lim_{T\to\infty}\int_0^T \partial_{D g_t {\bf v}}\tau(g_t y) dt$$
and the integrand decays exponentially since $\DS \|Dg_t {\bf v}\|\leq Ce^{-B t}\|{\bf v}\|. $
\end{proof}

The proof for the unstable manifold is similar. We omit the proof here, but state the expression in the following. Let $\beta^u(x,y):=-\int_0^\infty(\tau(g_{-r}x)-\tau(g_{-r}y))dr$. Define a map $\Phi_x^u$ by $\Phi_x^u(y):=g^\tau_{-\beta^u(x,y)}(y)$. We can show $\Phi_x^u(y)\in W^u_\tau(x)$ analogously.
\end{proof}

\begin{remark}
If $y\in W^s(x)$ and $y'\in W^s_\tau(x)$ with $y'=\Phi_x^s(y)$, then Proposition \ref{inv-p} gives that $\int_0^\infty (\tau(g_rx)-\tau(g_ry))dr=-\int_0^{T_x(y)}\tau(g_ry)dr$, which roughly speaking, is equivalent to saying that the integral of $\tau-\int\tau$ along the forward orbit of $x$ asymptotically equals to that along the forward orbit of $y'$. 
\end{remark}

\begin{proposition}\label{p-split}
For any $t>0$, there is a $g^\tau_t$ invariant splitting $TM=\tilde{E}^s\oplus E^c\oplus \tilde{E}^u$. More explicitly,
for each $x$, there is a linear transformation $L^s_x:E^s(x)\to \tilde{E}^s(x)$ such that $\tilde{E}^s(x)=L^s_x(E^s(x))$. Moreover, the map $x\mapsto L^s_x$ is continuous. Analogous result for $\tilde{E}^u$ is also true.
\end{proposition}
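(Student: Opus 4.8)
The plan is to take as the new stable and unstable distributions the tangent spaces of the foliations $\{W^s_\tau(x)\}$, $\{W^u_\tau(x)\}$ produced by Proposition \ref{inv-p}, to keep the center distribution $E^c$ of $g_t$ unchanged, and then to check that $TM=\tilde E^s\oplus E^c\oplus\tilde E^u$ is a genuine $Dg^\tau_t$-invariant splitting while exhibiting the maps $L^s_x,L^u_x$. Concretely, set $\tilde E^s(x):=T_xW^s_\tau(x)$ and $\tilde E^u(x):=T_xW^u_\tau(x)$. Since $\beta^s(x,x)=0$ we have $\Phi^s_x(x)=x$, so $\Phi^s_x\colon W^s(x)\to W^s_\tau(x)$ is smooth (by the lemma in the proof of Proposition \ref{inv-p}) and fixes $x$; hence $\tilde E^s(x)=D\Phi^s_x|_x\big(E^s(x)\big)$, and one puts $L^s_x:=D\Phi^s_x|_x$, and analogously $L^u_x:=D\Phi^u_x|_x$ with $\tilde E^u(x)=L^u_x(E^u(x))$.

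The next step is to compute $L^s_x$. Differentiating $\Phi^s_x(y)=g^\tau_{-\beta^s(x,y)}(y)$ along a curve in $W^s(x)$ through $x$ with velocity ${\bf v}\in E^s(x)$, using that $g^\tau_t$ is generated by $\tau^{-1}X$ and the formula $\partial_{\bf v}\beta^s(x,x)=-\int_0^\infty\partial_{Dg_r{\bf v}}\tau(g_rx)\,dr$ from that lemma, gives
$$ L^s_x({\bf v})={\bf v}+\frac{1}{\tau(x)}\Big(\int_0^\infty\partial_{Dg_r{\bf v}}\tau(g_rx)\,dr\Big)X(x), $$
and symmetrically $L^u_x({\bf v})={\bf v}-\tfrac{1}{\tau(x)}\big(\int_0^\infty\partial_{Dg_{-r}{\bf v}}\tau(g_{-r}x)\,dr\big)X(x)$ for ${\bf v}\in E^u(x)$. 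Thus $L^s_x$ (resp. $L^u_x$) is the identity plus a term valued in the flow direction $\R X(x)$: in particular it is injective, so $\dim\tilde E^s(x)=\dim E^s(x)$, $\dim\tilde E^u(x)=\dim E^u(x)$, and $\tilde E^s(x)\subset E^s(x)\oplus\R X(x)$, $\tilde E^u(x)\subset E^u(x)\oplus\R X(x)$. Continuity of $x\mapsto L^s_x$ (as a section of $\mathrm{Hom}(E^s,TM)$) follows since the integrand $\partial_{Dg_r{\bf v}}\tau(g_rx)$ decays exponentially, uniformly in $x$ — using $\|Dg_r{\bf v}\|\le Ce^{-Br}\|{\bf v}\|$ on $E^s$ and the $C^1$-boundedness of $\tau$ on the compact $M$ — so the integral is a locally uniform limit of the continuous partial integrals $\int_0^T$, which are continuous because $E^s$ is continuous and $g_r,\tau$ are smooth; since $E^s$ itself is continuous this keeps $\tilde E^s$ continuous, and the same applies to $L^u_x$.

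Finally, for invariance: by Proposition \ref{inv-p} the flow $g^\tau_t$ carries $W^s_\tau(x)$ onto $W^s_\tau(g^\tau_tx)$ and $W^u_\tau(x)$ onto $W^u_\tau(g^\tau_tx)$, so $Dg^\tau_t$ carries $\tilde E^s(x)$ to $\tilde E^s(g^\tau_tx)$ and $\tilde E^u(x)$ to $\tilde E^u(g^\tau_tx)$; and from $g^\tau_t(x)=g_{\alpha(x,t)}(x)$ one gets $Dg^\tau_t|_x{\bf v}=Dg_{\alpha(x,t)}|_x{\bf v}+(\partial_{\bf v}\alpha(x,t))X(g^\tau_tx)$, so since $E^c$ is $Dg_t$-invariant and contains the flow direction $\R X$, it is $Dg^\tau_t$-invariant as well. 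That $\tilde E^s\oplus E^c\oplus\tilde E^u=TM$ follows by a dimension count once directness is checked: in a relation ${\bf v}_s+{\bf w}+{\bf v}_u=0$ with ${\bf v}_s\in\tilde E^s$, ${\bf w}\in E^c$, ${\bf v}_u\in\tilde E^u$, the $E^s\oplus E^u$-components of ${\bf v}_s,{\bf v}_u$ lie in $E^c$ (as $\R X\subset E^c$), hence vanish because $(E^s\oplus E^u)\cap E^c=0$, and then the identity-plus-flow-direction shape of $L^s_x,L^u_x$ forces the remaining flow-direction components (hence ${\bf w}$) to vanish. The main obstacle I anticipate is not conceptual but bookkeeping: making the identification $\tilde E^s(x)=T_xW^s_\tau(x)=L^s_x(E^s(x))$ rigorous so that invariance of the foliation transfers to invariance of the distribution, tracking the flow-direction correction terms carefully enough that directness of the sum is transparent, and verifying continuity of the new bundles despite $E^s,E^u$ being only H\"older.
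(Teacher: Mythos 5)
Your proof takes essentially the same route as the paper's: set $\tilde{E}^s(x)=T_xW^s_\tau(x)$, compute $L^s_x=D\Phi^s_x|_x$ by the chain rule as the identity plus a flow-direction correction governed by $\partial_{\bf v}\beta^s$, and deduce continuity from the exponentially convergent integral for that derivative; you merely spell out invariance and directness of the splitting more explicitly than the paper does. One small remark: carrying out the chain rule as you do gives $L^s_x{\bf v}={\bf v}-\frac{\partial_{\bf v}\beta^s(x,\cdot)}{\tau}X$, with a minus sign, whereas the paper's displayed formula has a plus sign — this looks like a typo there, and in any case is immaterial since only the two-sided bound \eqref{Comp} on $\|L^s_x{\bf v}\|$ is used.
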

\begin{proof}
By Proposition \ref{inv-p}, it follows that the space $\tilde{E}^s=TW^s_\tau$ is invariant. By a direct computation, 
$\DS \tilde{E}^s(x)=L^s_x E^s(x)$ where $\DS L^s_x {\bf v}={\bf v}+\frac{\partial_{\bf v} \beta^s(x,\cdot)}{\tau} X$. Here, $X$ is the vector field generating the flow $g_t$. The continuity of $x\mapsto L_x^s$ follows from the continuity of $\beta^s(x, \cdot)$ on $x$.
Since by the definition of partial derivative, $|\partial_{\bf v} \beta^s(x,\cdot)|$ is proportional to $\|\bf v\|$, there is a constant $K$ such that for all $x\in M$, ${\bf v}\in E^s(x)$,
\begin{equation}
\label{Comp}
 \frac{\|{\bf v}\|}{K}\leq \|L^s_x{\bf v}\|\leq K \|{\bf v}\|. 
\end{equation}
It is analogous for $\tilde{E}^u$.
\end{proof}

\begin{proposition}
The flow $g_t^\tau$ is dynamically coherent.
\end{proposition}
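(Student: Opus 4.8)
The plan is to show that passing from $g_t$ to $g_t^\tau$ changes none of the three distributions whose unique integrability is required in the definition of dynamical coherence, so that the statement reduces to the dynamical coherence of $g_t$ itself. Two facts are used throughout: the generating vector field of a partially hyperbolic flow lies in the center bundle, so $X(x)\in E^c(x)$ for all $x$; and, by Proposition \ref{p-split}, $g_t^\tau$ has the same center bundle $E^c$ as $g_t$.

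First I would compare $\tilde E^s\oplus E^c$ with $E^s\oplus E^c$. By Proposition \ref{p-split}, $\tilde E^s(x)=L^s_x E^s(x)$ with $L^s_x{\bf v}={\bf v}+\frac{\partial_{\bf v}\beta^s(x,\cdot)}{\tau(x)}\,X(x)$; since $X(x)\in E^c(x)$, every such vector lies in $E^s(x)\oplus E^c(x)$, so $\tilde E^s(x)\subseteq E^s(x)\oplus E^c(x)$. As $L^s_x$ is injective by \eqref{Comp} we have $\dim\tilde E^s(x)=\dim E^s(x)$, and since $\tilde E^s(x)$ and $E^s(x)$ are each in direct sum with $E^c(x)$ (being summands of the respective splittings), a dimension count upgrades the inclusion to $\tilde E^s(x)\oplus E^c(x)=E^s(x)\oplus E^c(x)$. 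The same argument for $\tilde E^u$ gives $\tilde E^u(x)\oplus E^c(x)=E^c(x)\oplus E^u(x)$, and the center bundle is literally unchanged. Thus the distributions $E^c$, $\tilde E^s\oplus E^c$, $\tilde E^u\oplus E^c$ attached to $g_t^\tau$ coincide, as subbundles of $TM$, with $E^c$, $E^s\oplus E^c$, $E^c\oplus E^u$ attached to $g_t$.

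Since unique integrability is a property of a distribution alone, it then remains to invoke the dynamical coherence of $g_t$. In the homogeneous setting this is immediate: $E^s$, $E^c$, $E^u$ are smooth — sums of generalized $\mathrm{ad}(X)$-eigenspaces — and $E^s\oplus E^c$, $E^c\oplus E^u$ are Lie subalgebras, hence uniquely integrable by Frobenius; more generally it holds whenever $E^c$ is the one-dimensional flow direction, where $E^c$ is tangent to the orbit foliation and $E^s\oplus E^c$, $E^c\oplus E^u$ are tangent to the uniquely integrable weak stable and weak unstable foliations $\bigcup_t g_tW^s$ and $\bigcup_t g_tW^u$. One may add that all these foliations are saturated by $g_t$-orbits, which are exactly the $g_t^\tau$-orbits, so they are $g_t^\tau$-invariant and are precisely the center, weak stable and weak unstable foliations of $g_t^\tau$. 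The only point needing care is that the input — dynamical coherence of $g_t$ — be legitimately available in whatever generality one states the proposition; beyond that I expect no real obstacle, the content being simply that a time change is cosmetic at the level of the center-stable and center-unstable bundles.
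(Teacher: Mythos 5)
Your argument is correct and follows the same route the paper takes, which merely asserts (without the explicit $L^s_x$ computation) that $g_t^\tau$ shares the center, center-stable, and center-unstable foliations with $g_t$ and hence inherits dynamical coherence; you have simply filled in the dimension count showing $\tilde E^s\oplus E^c=E^s\oplus E^c$ and the reason $g_t$ itself is coherent. No substantive difference.
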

\begin{proof}
It is straightforward to verify the joint integrability (and unique integrability) of $E^c$ with $\tilde{E}^s,\tilde{E}^u$. Notice also that $g_t^\tau$ shares the same center (and also center-stable, center-unstable) foliation with $g_t$. The time change is dynamically coherent since $g_t$ is.
\end{proof}

\subsection{Center bunching for time changes}

We have the following general result, which is applicable to any partially hyperbolic flow.

\begin{theorem}\label{th-ce}
Suppose $g_t$ is a uniformly partially hyperbolic flow and $\tau$ is a smooth positive function. 
Then (for large $T$), $g_T^\tau$ as a diffeomorphism, is pointwise partially hyperbolic (with the functions depending on $T$). If $g_t$ is uniformly center bunched
then for large $T$, $g_T^\tau$ is pointwise center bunched.
\end{theorem}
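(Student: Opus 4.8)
The plan is to reduce everything to an explicit estimate of the singular values of $Dg_T^\tau$ restricted to the three subbundles $\tilde E^s, E^c, \tilde E^u$ produced by Proposition~\ref{p-split}, and then to check the defining inequalities of pointwise partial hyperbolicity and center bunching directly. The key starting observation is that, by construction, $g_T^\tau$ maps each leaf $W^s_\tau$ into itself and on such a leaf it is an honest \emph{time reparametrization along the orbit of $g_t^\tau$}; moreover the quantity $\beta^s(x,y)$ is uniformly bounded (since $\tau$ is bounded and the fibre $W^s(x)$ has uniformly contracting geometry). Consequently the ``flow time'' $\alpha(x,T)$ needed to realize the diffeomorphism $g_T^\tau$ at $x$ satisfies $c_1 T \le \alpha(x,T)\le c_2 T$ for constants depending only on $\min\tau,\max\tau$. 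So on $E^s$, the original bound $\|Dg_{\alpha(x,T)}\mathbf v\|\le Ce^{-B\alpha(x,T)}\|\mathbf v\|\le Ce^{-Bc_1 T}\|\mathbf v\|$ gives exponential contraction; combining this with the two-sided comparison \eqref{Comp} between $E^s$ and $\tilde E^s=L^s_x E^s$ shows $\|Dg_T^\tau \mathbf w\|\le K^2 C e^{-Bc_1 T}\|\mathbf w\|$ for $\mathbf w\in\tilde E^s(x)$. The symmetric argument with $t\to -t$ handles $\tilde E^u$.

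First I would set up the notation: write the derivative of $g_T^\tau$ at $x$ as $D g_{\alpha(x,T)}|_x$ composed with the derivative of the ``landing time'' correction, i.e.\ differentiate the identity $g_T^\tau(x)=g_{\alpha(x,T)}(x)$ using that $\alpha(\cdot,T)$ is smooth and has derivative controlled by $\nabla\tau$ and the cocycle relation. The cleanest route is to work subbundle-by-subbundle using the already-established invariance: for $\mathbf w\in\tilde E^s(x)$ we have $Dg_T^\tau\,\mathbf w\in\tilde E^s(g_T^\tau x)$, and the restriction of $Dg_T^\tau$ to $\tilde E^s$ is conjugate, via the uniformly bilipschitz maps $L^s_\cdot$, to the restriction of $Dg_{\alpha(\cdot,T)}$ to $E^s$ (up to the flow-direction bookkeeping, which lives in $E^c$ and does not affect the norm estimate on the quotient). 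Then the central estimate is just: pick $a',b',A',B'$ and a constant $C'=C'(T)$ so that
\begin{align*}
\|Dg_T^\tau\mathbf w\|&\le C'e^{-B'T}\|\mathbf w\|,\quad \mathbf w\in\tilde E^s,\\
C'^{-1}e^{-a'T}\|\mathbf w\|\le\|Dg_T^\tau\mathbf w\|&\le C'e^{b'T}\|\mathbf w\|,\quad \mathbf w\in E^c,\\
C'^{-1}e^{A'T}\|\mathbf w\|&\le\|Dg_T^\tau\mathbf w\|,\quad \mathbf w\in\tilde E^u,
\end{align*}
where one may take $B'=B c_1$, $A'=A c_1$, $a'=a c_2$, $b'=b c_2$ (all times a harmless multiplicative loss from $K^2$, which is absorbed into $C'$), and this is a \emph{uniform} partial hyperbolicity statement for the flow $g_T^\tau$ regarded via its orbits; evaluating at the single time $T$ and taking $T$ large enough that $e^{-B'T}<1$, $e^{-a'T}<1$, etc., gives the pointwise partially hyperbolic structure with functions $\nu=C'e^{-B'T}$, $\hnu=C'e^{-A'T}$, $\gamma=C'^{-1}e^{-a'T}$, $\hgamma^{-1}=C'e^{b'T}$ (constant functions, in fact). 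For center bunching, the hypothesis $a+b<A$ and $a+b<B$ on $g_t$ translates, after the $c_1,c_2$ rescaling, into $a'+b'=(a+b)c_2$ versus $A'=Ac_1,\ B'=Bc_1$; these are \emph{not} automatically compatible for fixed $c_1,c_2$, but the gap is recovered by letting $T\to\infty$: center bunching for $g_T^\tau$ as a diffeomorphism requires $\nu<\gamma\hgamma$ and $\hnu<\gamma\hgamma$, i.e.\ roughly $e^{-B'T}<e^{-(a'+b')T}$ and $e^{-A'T}<e^{-(a'+b')T}$, i.e.\ $a'+b'<B'$ and $a'+b'<A'$; since we are free to shrink $a',b'$ and enlarge $A',B'$ toward their true exponential rates as $T\to\infty$ (the constants $C'$ degrade but that is irrelevant for the bunching inequalities, which only involve the exponents), it suffices that the \emph{true} Lyapunov-type exponents of $g_t^\tau$ inherit $a+b<A$, $a+b<B$ — and they do, because along any orbit the total elapsed $g_t$-time over a $g_t^\tau$-time interval of length $T$ is comparable to $T$ with the \emph{same} asymptotic ratio $\big(\int\tau\,dm\big)^{-1}$ is not needed; what is needed is only that the ratio stays in $[c_1,c_2]$, and by choosing the estimate window large the polynomial/constant factors wash out.

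The main obstacle I expect is precisely this last point: making the center-bunching inequality work, since a crude substitution $\alpha(x,T)\in[c_1T,c_2T]$ inflates the center expansion exponent by $c_2$ while deflating the (un)stable exponent by $c_1$, and if $c_2/c_1$ is large this could destroy $a+b<A$. The resolution is that center bunching is an \emph{asymptotic} (exponents-only) condition: one should not estimate $Dg_T^\tau$ at a single fixed $T$ with the naive bound, but instead observe that for the \emph{flow} $g_t^\tau$ one has genuine exponents $\tilde a=\limsup \frac1t\log\|Dg_t^\tau|_{E^c}\|$ etc., and that along each orbit the reparametrization is an additive cocycle $\alpha(x,t)$ with $\alpha(x,t)/t\to$ (orbit-dependent but uniformly bounded) limits, so $\tilde a\le a\cdot\sup(\alpha/t)$ and $\tilde A\ge A\cdot\inf(\alpha/t)$ — wait, that reintroduces the ratio. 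The honest fix, which I would write out carefully, is that $\tilde a + \tilde b$ and $\tilde A$ are computed along the \emph{same} orbit over the \emph{same} elapsed $g_t$-time, so the reparametrization factor is \emph{common} and cancels: if over $g_t^\tau$-time $T$ the orbit experiences $g_t$-time $S=\alpha(x,T)$, then $\|Dg_T^\tau|_{E^c}\|\approx e^{aS+bS}$ (worst case) while $\|Dg_T^\tau|_{\tilde E^u}\|\gtrsim e^{AS}$, with the \emph{same} $S$, whence the bunching inequalities for $g_T^\tau$ reduce to $a+b<A$ and $a+b<B$ for $g_t$ with no ratio loss at all, plus constant factors $K^2, C$ that are killed by taking $T$ (equivalently $S$) large. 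Once this orbit-wise common-rescaling argument is in place, the rest is the bookkeeping above, and the theorem follows; the assertion about the time-one-type map for ``large $T$'' sufficing is exactly the remark already made before the theorem.
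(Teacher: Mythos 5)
Your proposal is correct and follows essentially the same route as the paper: use the bilipschitz comparison $L^{s/u}_x$ from Proposition~\ref{p-split} to transfer the (un)stable estimates, differentiate $g_T^\tau(x)=g_{\alpha(x,T)}(x)$ for the center estimate, and crucially observe that all four functions $\nu,\hnu,\gamma,\hgamma$ for $g_T^\tau$ are expressed in terms of the \emph{same} pointwise quantity $\alpha(x,T)$, so the bunching inequalities $a+b<A$ and $a+b<B$ transfer with no ratio loss. Your ``honest fix'' paragraph is exactly the paper's mechanism; the earlier naive uniform bound $c_1 T\le\alpha(x,T)\le c_2 T$ that you (rightly) flag as inadequate is never needed, since \emph{pointwise} partial hyperbolicity is designed precisely to accommodate the point-dependent rates $e^{-\alpha(x,T)B+o(T)}$, etc.
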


Note that $g_T^\tau$ preserves $E^c.$ Recall that below the constants $a,b,A,B$ are constants in the definition of uniformly partial hyperbolicity of $g_t$.

\begin{lemma}
(a) If ${\bf v}\in E^c(x)$ then $\DS \|D_xg_T^\tau  {\bf v}\|\leq C T e^{b \alpha(x, T)} \|{\bf v}\| $.

(b) If ${\bf v}\in E^c(x)$ then $\DS \|D_xg_T^\tau  {\bf v}\|\geq C^{-1} T e^{-a \alpha(x,T)} \|{\bf v}\| $.
\end{lemma}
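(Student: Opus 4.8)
The plan is to estimate the norm of $D_x g_T^\tau \mathbf{v}$ for $\mathbf{v}\in E^c(x)$ by relating it to $D_x g_{\alpha(x,T)}\mathbf{v}$, which we can control by the uniform center estimates for $g_t$. First I would write $g_T^\tau(x) = g_{\alpha(x,T)}(x)$ and differentiate in $\mathbf{v}$. Since $E^c$ is $g_t$-invariant and also $g_t^\tau$-invariant, the derivative splits into two contributions: the part coming from moving the basepoint, namely $D_x g_{\alpha(x,T)}\mathbf{v}$, and the part coming from the variation of the time parameter, namely $(\partial_{\mathbf{v}}\alpha(x,T))\, X(g_T^\tau x)$. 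I would observe that $X$ is (a scalar multiple of) the generator of the center direction in the one-dimensional-center homogeneous case, so the second term is again a center vector; more importantly, for the \emph{general} partially hyperbolic setting one notes the orbit direction $X$ lies in $E^c$, so the whole expression stays in $E^c(g_T^\tau x)$, consistent with invariance, and the dominant growth is still governed by $\|D_x g_{\alpha(x,T)}\|$ on $E^c$.

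Next I would bound the two pieces. For the first piece, the uniform partial hyperbolicity of $g_t$ gives, for $s = \alpha(x,T)>0$,
\[
C^{-1} e^{-a s}\|\mathbf{v}\| \leq \|D_x g_s \mathbf{v}\| \leq C e^{b s}\|\mathbf{v}\|,
\]
so substituting $s=\alpha(x,T)$ already produces the exponential factors $e^{b\alpha(x,T)}$ and $e^{-a\alpha(x,T)}$ claimed. For the second piece I would bound $|\partial_{\mathbf v}\alpha(x,T)|$: differentiating the defining relation $\int_0^{\alpha(x,T)}\tau(g_r x)\,dr = T$ in the direction $\mathbf{v}$ gives
\[
\tau(g_{\alpha(x,T)}x)\,\partial_{\mathbf v}\alpha(x,T) + \int_0^{\alpha(x,T)} \partial_{D g_r \mathbf{v}}\tau(g_r x)\, dr = 0,
\]
and since $\mathbf{v}\in E^c$, the growth of $Dg_r\mathbf{v}$ is at most $C e^{b r}\|\mathbf v\|$, so the integral is $O(e^{b\alpha(x,T)}\|\mathbf v\|)$ up to a constant times $\alpha(x,T)$; using that $\tau$ is bounded below by a positive constant, $|\partial_{\mathbf v}\alpha(x,T)| = O(\alpha(x,T) e^{b\alpha(x,T)}\|\mathbf v\|)$, and $\|X\|$ is bounded on the compact manifold. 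Finally I would use the elementary comparison $c_1 T \le \alpha(x,T) \le c_2 T$ (again from $\tau$ being bounded above and below), so all the $\alpha(x,T)$-prefactors and the crude $e^{b\alpha(x,T)}$ bound on the second term can be absorbed into the polynomial factor $T$ and the constant $C$ in the statement; combining the pieces by the triangle inequality for the upper bound, and noting the second (orbit-direction) term cannot cancel the first since together they must give the invariant center norm, for the lower bound, yields (a) and (b).

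The main obstacle I anticipate is making the lower bound in (b) rigorous: the naive triangle inequality only gives an upper bound on a sum, so one must argue that the "correction" term $(\partial_{\mathbf v}\alpha)X$ does not destroy the lower bound $C^{-1}e^{-a\alpha(x,T)}\|\mathbf v\|$ coming from $D_x g_{\alpha(x,T)}\mathbf v$. The clean way around this is to use the invariance of $E^c$ differently: instead of decomposing $D_x g_T^\tau\mathbf v$, I would compare the center norm computed with respect to the two flows directly, using that $\{W^s_\tau, E^c, W^u_\tau\}$ is the $g_t^\tau$-invariant splitting from Proposition \ref{p-split} and that $E^c$ is literally the same subbundle for both flows — so $D_x g_T^\tau$ restricted to $E^c$ is conjugate, via a bounded cocycle reparametrization, to $D_x g_s$ restricted to $E^c$ at time $s=\alpha(x,T)$, and the estimates transfer with the stated polynomial-in-$T$ loss. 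A secondary, purely technical point is keeping the constants uniform in $x$; this follows from compactness of $M$ and continuity of $\tau$, $X$, and the splitting, so it should not cause real difficulty.
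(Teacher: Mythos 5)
Your argument for part (a) is essentially the paper's: decompose $D_xg_T^\tau{\bf v}=D_xg_{\alpha(x,T)}{\bf v}+\partial_{\bf v}\alpha(x,T)\,X$, bound $\partial_{\bf v}\alpha$ by differentiating $\int_0^{\alpha(x,T)}\tau(g_r x)\,dr=T$, and use the uniform center growth rate of $g_t$ together with boundedness of $\tau$, $\tau^{-1}$ and $\|X\|$. That part is correct and is exactly how the paper proceeds.

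Your handling of part (b), however, has a real gap. You correctly observe that the triangle inequality gives nothing in the downward direction, but the substitute you propose --- that $D_xg_T^\tau|_{E^c}$ is ``conjugate via a bounded cocycle reparametrization'' to $D_xg_{\alpha(x,T)}|_{E^c}$ --- is not justified and does not hold in the form you need. The two maps from $E^c(x)$ to $E^c(g_T^\tau x)$ differ precisely by the correction $\partial_{\bf v}\alpha(x,T)\,X$, and this term can have norm of order $T\,e^{b\alpha(x,T)}\|{\bf v}\|$, which for large $T$ \emph{dominates} $\|D_xg_{\alpha(x,T)}{\bf v}\|$ by an unbounded (polynomial) factor; it is only on the quotient $E^c/\mathbb{R}X$ that the two derivatives literally agree, and passing through that quotient is extra work you have not done. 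So the claimed ``bounded conjugation'' on $E^c$ itself is false, and without it your heuristic remark that the correction ``cannot cancel the first term'' is unsupported: a priori the two summands could nearly cancel.

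The paper sidesteps all of this with a one-line trick you did not identify: apply part (a) to the \emph{inverse} map $(g_T^\tau)^{-1}$ at the point $g_{\alpha(x,T)}(x)$. The inverse is the corresponding time change of the reversed flow $g_{-t}$, for which the roles of the center constants $a$ and $b$ are exchanged and the cocycle value at $g_T^\tau(x)$ is again $\alpha(x,T)$. Taking ${\bf w}=D_xg_T^\tau{\bf v}$ in the resulting upper bound for $(g_T^\tau)^{-1}$ and solving for $\|D_xg_T^\tau{\bf v}\|$ gives the desired lower bound directly. You should replace your second paragraph on (b) with this duality argument; the ``conjugation'' route would need the quotient-by-$X$ reduction spelled out to be made rigorous, and even then it is more work than the paper's approach.
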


\begin{proof}
Firstly, notice that $g^\tau_t(x)=g_{\alpha(x,t)}(x)$, then $ D(g_T^\tau) {\bf v}=Dg_T^\tau {\bf v}+\partial_{\bf v} \alpha(x, T) X,$
where $X$ is the vector field generating $g_t.$ 

Denoting $\DS \tau_T(x)=\int_0^T \tau(g_t x) dt$, 
we get from the equation
$ \tau_{\alpha(x, T)}(x)=T $
that $$\DS \partial_{\bf v} \alpha(x, T) \tau(g_{\alpha(x,T)}(x))=-\partial_{\bf v} \tau_{\alpha(x,T)}(x). $$
Since
$$ \partial_{\bf v} \tau_{\alpha(x,t)} (x)=\int_0^{\alpha(x,t)} [\partial_{Dg_t {\bf v}} \tau](g_t x) dt$$
and $\|Dg_t {\bf v}\|\leq C_0e^{bt}$, part (a) follows.

Part (b) follows from part (a) applied to $(g_T^\tau)^{-1}$ at $g_{\alpha(x, T)}(x)$.
\end{proof}

\begin{remark} 
The argument given above also shows that if the derivative in the center direction grows
not faster than $C t^l$ for some $l>0$ then the derivative in the center direction for the time change grows
by at most $\bar{C} t^{l+1}.$ This is the case for homogeneous flows.
\end{remark}

Recall that $W_\tau^s(x)=\{g^\tau_{-\beta^s(x,y)} y\}_{y\in W^s(x)}$ where
$$ \beta^s(x,y)=\lim_{T\to+\infty} [\tau_T(x)-\tau_T(y)].$$

\begin{lemma}
There is a constant $R$ such that for ${\bf u}\in \tilde{E}^s(x)$, 
$$\| Dg_T^\tau{\bf u}\|\leq R e^{-B \alpha(x, T)} \|{\bf u}\|. $$
\end{lemma}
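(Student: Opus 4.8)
The idea is to transport the estimate from $g_t$ to $g_t^\tau$ through the graph description of the new stable bundle provided by Proposition \ref{p-split}. Recall from there that $\tilde{E}^s(x)=L^s_xE^s(x)$ with $L^s_x{\bf v}={\bf v}+\tfrac{\partial_{\bf v}\beta^s(x,\cdot)}{\tau}X$, that $X\in E^c$, and that \eqref{Comp} gives the uniform two-sided bound $\|{\bf v}\|/K\le\|L^s_x{\bf v}\|\le K\|{\bf v}\|$. In particular $\tilde{E}^s(x)$ is a graph over $E^s(x)$ in the flow direction, so the projection $\pi^s_x$ onto $E^s(x)$ along $E^c(x)\oplus E^u(x)$ restricts to an isomorphism $\tilde{E}^s(x)\to E^s(x)$ which is inverse to $L^s_x$.

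Fix $T>0$, put $x'=g^\tau_T(x)=g_{\alpha(x,T)}(x)$, and take ${\bf u}\in\tilde{E}^s(x)$, say ${\bf u}=L^s_x{\bf v}$ with ${\bf v}\in E^s(x)$. Differentiating $g^\tau_T(y)=g_{\alpha(y,T)}(y)$ exactly as in the proof of the preceding lemma, and using that the generator $X$ of $g_t$ is flow-invariant, one gets
$$ D_xg^\tau_T{\bf u}=D_xg_{\alpha(x,T)}{\bf u}+\partial_{\bf u}\alpha(x,T)\,X=D_xg_{\alpha(x,T)}{\bf v}+\Big(\tfrac{\partial_{\bf v}\beta^s(x,\cdot)}{\tau}+\partial_{\bf u}\alpha(x,T)\Big)X. $$
Here $D_xg_{\alpha(x,T)}{\bf v}\in E^s(x')$ while the last term lies in $E^c(x')$ since $X\in E^c$. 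On the other hand $\tilde{E}^s$ is $g^\tau_T$-invariant (Proposition \ref{inv-p}), so $D_xg^\tau_T{\bf u}=L^s_{x'}{\bf w}={\bf w}+(\text{scalar})X$ for a unique ${\bf w}\in E^s(x')$. Since the splitting $E^s\oplus E^c\oplus E^u$ is direct and $X\in E^c$, comparing $E^s(x')$-components forces ${\bf w}=D_xg_{\alpha(x,T)}{\bf v}$.

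With this identity the conclusion is immediate: applying \eqref{Comp} to ${\bf u}=L^s_x{\bf v}$ and to $L^s_{x'}{\bf w}$, and then the uniform partial hyperbolicity of $g_t$ on $E^s$ at time $\alpha(x,T)>0$ (which is positive because $\tau>0$),
$$ \|D_xg^\tau_T{\bf u}\|=\|L^s_{x'}{\bf w}\|\le K\|{\bf w}\|=K\|D_xg_{\alpha(x,T)}{\bf v}\|\le KCe^{-B\alpha(x,T)}\|{\bf v}\|\le K^2Ce^{-B\alpha(x,T)}\|{\bf u}\|, $$
so the lemma holds with $R=K^2C$. The one step carrying real content is the identification ${\bf w}=D_xg_{\alpha(x,T)}{\bf v}$; equivalently it records the conjugacy $g^\tau_T\circ\Phi^s_x=\Phi^s_{x'}\circ g_{\alpha(x,T)}$ on $W^s(x)$, which comes from the normalization $\tau_{\alpha(x,T)}(x)=T$ and the shift rule $\beta^s(g_{\alpha(x,T)}x,g_{\alpha(x,T)}y)=\beta^s(x,y)-\int_0^{\alpha(x,T)}(\tau(g_rx)-\tau(g_ry))\,dr$. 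I do not expect a genuine obstacle; the only point demanding care is that $X\in E^c$, which is what makes $\tilde{E}^s$ a graph and validates the component-matching argument.
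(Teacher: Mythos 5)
Your proof is correct and follows the same route as the paper: write ${\bf u}=L^s_x{\bf v}$, reduce the estimate to that of $g_{\alpha(x,T)}$ on $E^s$ via the identity $Dg_T^\tau{\bf u}=L^s_{g^\tau_T x}\bigl(Dg_{\alpha(x,T)}{\bf v}\bigr)$, and apply the comparison bound \eqref{Comp} together with the uniform contraction of $g_t$ on $E^s$. The paper simply asserts that identity in one line, while you supply its justification --- the component-matching argument in the splitting $E^s\oplus E^c\oplus E^u$, resting on $X\in E^c$ and flow-invariance of $X$ --- which is the one substantive step the paper leaves implicit.
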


\begin{proof}
Let ${\bf u}=L^s_x({\bf v}),$ here $L_x^s$ is from Proposition \ref{p-split}. Then $\DS D g_T^\tau {\bf u}=L^s_{g^\tau_Tx}(Dg_{\alpha(x,T)} {\bf v}).$
Now \eqref{Comp} shows that 
$\DS \frac{\|Dg_T^\tau {\bf u}\|}{\|{\bf u}\|}$  and $\DS \frac{\|Dg_{\alpha(x,T)} {\bf v}\|}{\|{\bf v}\|}$
are within factor  $K^2$ from each other, thus finishing the proof.
\end{proof}

By applying the last lemma to $(g_T^\tau)^{-1}$ we get
\begin{lemma}
$\DS \| Dg_T^\tau {\bf u}\|\geq R^{-1} e^{A \alpha(x, T)} \|{\bf u}\|$  for ${\bf u} \in \tilde{E}^u(x). $
\end{lemma}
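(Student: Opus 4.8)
This estimate is the exact dual of the preceding contraction lemma, and the plan --- following the way part~(b) of the center-direction lemma was obtained from part~(a) --- is to deduce it by applying that lemma to the inverse flow. The key observation is that $(g_T^\tau)^{-1}=g_{-T}^\tau$ is itself the time-$T$ map of a time change: it is the time change, by the same smooth positive function $\tau$, of the reversed flow $h_t:=g_{-t}$. Since $g_t$ is uniformly partially hyperbolic, so is $h_t$, with $E^s$ and $E^u$ interchanged and with the expansion rate $A$ of $E^u$ now playing the role of the stable rate ``$B$''. Under this identification the ``new stable manifold'' of $h_t^\tau$ furnished by Proposition~\ref{inv-p} is precisely $W^u_\tau$, its tangent distribution is $\tilde E^u(x)=L^u_x(E^u(x))$, and $L^u_x$ obeys the same two-sided bound \eqref{Comp} (with a possibly larger constant $K$, which we may take common to $L^s$ and $L^u$).

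Given this, I would rerun the proof of the previous lemma verbatim for $h_t^\tau$: for ${\bf u}\in\tilde E^u(x)$ write ${\bf u}=L^u_x({\bf v})$ with ${\bf v}\in E^u(x)$, use $D h_T^\tau{\bf u}=L^u_{h_T^\tau x}\bigl(Dh_{\tilde\alpha(x,T)}{\bf v}\bigr)$, apply \eqref{Comp} to compare $\|Dh_T^\tau{\bf u}\|/\|{\bf u}\|$ with $\|Dh_{\tilde\alpha(x,T)}{\bf v}\|/\|{\bf v}\|$ up to a factor $K^2$, and invoke the uniform bound $\|Dh_s{\bf v}\|\le Ce^{-As}\|{\bf v}\|$ valid for $s>0$ on $E^u$ (note $\tilde\alpha(x,T)>0$ since $\tau>0$). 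This produces a constant $R$ with $\|D(g_T^\tau)^{-1}{\bf u}\|=\|Dh_T^\tau{\bf u}\|\le Re^{-A\tilde\alpha(x,T)}\|{\bf u}\|$ for all ${\bf u}\in\tilde E^u(x)$.

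It remains to unwind the cocycle bookkeeping. Applying the last inequality at $y=g_T^\tau x$ to the vector $Dg_T^\tau{\bf u}\in\tilde E^u(y)$ (with ${\bf u}\in\tilde E^u(x)$) gives $\|{\bf u}\|\le Re^{-A\tilde\alpha(y,T)}\|Dg_T^\tau{\bf u}\|$, so the lemma follows once $\tilde\alpha(y,T)=\alpha(x,T)$. This is the only point needing care: the substitution $s\mapsto-s$ in $\int_0^{\tilde\alpha(x,T)}\tau(h_s x)\,ds=T$ gives $\tilde\alpha(x,T)=-\alpha(x,-T)$, while $g_{-T}^\tau(g_T^\tau x)=x$ together with additivity of $\alpha$ along orbits gives $\alpha(y,-T)=-\alpha(x,T)$; combining these yields $\tilde\alpha(y,T)=\alpha(x,T)$, and rearranging gives $\|Dg_T^\tau{\bf u}\|\ge R^{-1}e^{A\alpha(x,T)}\|{\bf u}\|$. (One could instead avoid time reversal and copy the previous lemma's proof directly, using $L^u$, \eqref{Comp} and the lower bound $\|Dg_{\alpha(x,T)}{\bf v}\|\ge C^{-1}e^{A\alpha(x,T)}\|{\bf v}\|$ for ${\bf v}\in E^u(x)$, which removes the need for the cocycle identity.) I expect that identity $\tilde\alpha(y,T)=\alpha(x,T)$ to be the only mild obstacle; the rest is a transcription of the preceding argument.
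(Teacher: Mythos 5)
Your proof is correct and follows precisely the paper's own approach: the paper deduces this lemma in one line by applying the preceding $\tilde E^s$-contraction estimate to $(g_T^\tau)^{-1}$. You simply spell out what that one line leaves implicit, including the cocycle identity $\tilde\alpha(g_T^\tau x,T)=\alpha(x,T)$, and your parenthetical remark about running the previous lemma's proof directly with $L^u$ and the lower bound on $E^u$ is an equally valid (and arguably cleaner) shortcut.
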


\begin{proof}[Proof of Theorem \ref{th-ce}]
The above lemmata together show that $g_T^\tau$ is pointwise partially hyperbolic with
$$ \nu=e^{-\alpha(x,T) B+o(T)}, \quad
\hnu=e^{-\alpha(x,T) A+o(T)}, \quad
\gamma=e^{-\alpha(x,T) a+o(T)}, \quad
\hgamma=e^{-\alpha(x,T) b+o(T)}. 
$$
Now the statements of the theorem follow easily.
\end{proof}

Theorem \ref{m-c-b} is a straightforward corollary of Theorem \ref{th-ce}, as follows.

\begin{proof}[Proof of Theorem \ref{m-c-b}]
Notice that in \cite{ps}, it is shown that $g_{t}$ 
is pointwise partially hyperbolic (with uniform constants) and center bunched for any $t\neq 0$. More precisely, for any $\epsilon>0$ sufficiently small, there exist $\rho\in (0,1)$ depending only on $g_t$, and a Riemannian structure on $TM$, such that 
for any unit vector ${\bf v}\in T_xM$, all $t>1$, $$\|D_xg_t({\bf v})\|<\rho^t,\;\text{if  }\;{\bf v}\in E^s(x);$$$$(1+\epsilon)^{-t}<\|D_xg_t({\bf v})\|<(1+\epsilon)^t,\;\text{if  }\;{\bf v}\in E^c(x);$$$$\rho^{-t}<\|D_xg_t({\bf v})\|,\;\text{if  }\;{\bf v}\in E^u(x).$$
Applying Theorem \ref{th-ce}, with respect to the above Riemannian metric, the time change (for large $T$) is pointwise partially hyperbolic and center bunched. The proof is complete.
\end{proof}

\section{Preparatory results on accessibility of time change}

\label{S-PA}

Assume that $g_t$ is a partially hyperbolic flow on a compact manifold $M$, and $\tau:M\to \R$ is a positive smooth function. Let $g_t^\tau$ be the time change.

\subsection{A correspondence of $su$-paths}

As in \cite{kk}, for $x\in M$ and $x'\in W^s(x)$, define $$PCF_{x,x'}(\phi)=\int_0^\infty(\phi(g_t(x))-\phi(g_t(x')))dt,$$and for $x\in M$ and $x'\in W^u(x)$, define $$PCF_{x,x'}(\phi)=\int_0^\infty(\phi(g_{-t}(x'))-\phi(g_{-t}(x)))dt.$$For an $su$ path $\mathcal C=\{x_0,x_1,\ldots,x_n\}$, here $x_{i+1}\in W^{s/u}(x_i)$ for $0\le i\le n-1$, let $$PCF(\mathcal C)(\phi)=\sum_{i=0}^{n-1}PCF_{x_i,x_{i+1}}(\phi).$$

\begin{proposition}\label{key-p-2}
For any $su$-path $\mathcal C=\{x_0,x_1,\ldots,x_n\}$ w.r.t. $g_t$, then $$\mathcal C'=\{x_0,g^\tau_{-t_1}x_1,\ldots,g^\tau_{-t_n}x_n\}$$ is an $su$ path w.r.t. $g_t^\tau$, where $\displaystyle t_k=\sum_{i=0}^{k-1}PCF_{x_i,x_{i+1}}(\tau)$ for $1\le k\le n$.
\end{proposition}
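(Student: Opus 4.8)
The plan is to reduce the claim about an $n$-step $su$-path to the two basic cases, a single stable segment and a single unstable segment, and then glue them using the cocycle identity for $\alpha$. So first I would prove the one-step statement: if $x' \in W^s(x)$ with respect to $g_t$, then $g^\tau_{-PCF_{x,x'}(\tau)}(x') \in W^s_\tau(x)$. But this is exactly the content of Proposition \ref{inv-p} once one observes that $\beta^s(x,x') = \int_0^\infty (\tau(g_r x) - \tau(g_r x'))\,dr = PCF_{x,x'}(\tau)$ by the very definitions; indeed $\Phi^s_x(x') = g^\tau_{-\beta^s(x,x')}(x') \in W^s_\tau(x)$ is precisely what Proposition \ref{inv-p} and the definition of $W^s_\tau(x)$ give. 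Similarly, for $x' \in W^u(x)$, the sign conventions are arranged so that $-\beta^u(x,x') = -\bigl(-\int_0^\infty (\tau(g_{-r}x)-\tau(g_{-r}x'))\,dr\bigr) = \int_0^\infty (\tau(g_{-r}x') - \tau(g_{-r}x))\,dr = PCF_{x,x'}(\tau)$, so $g^\tau_{-PCF_{x,x'}(\tau)}(x') = \Phi^u_x(x') \in W^u_\tau(x)$.

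Next I would set up the induction on $n$. Write $y_0 = x_0$ and, for $1 \le k \le n$, $y_k = g^\tau_{-t_k}(x_k)$ with $t_k = \sum_{i=0}^{k-1} PCF_{x_i,x_{i+1}}(\tau)$, so that $t_{k} = t_{k-1} + PCF_{x_{k-1},x_k}(\tau)$. I must show $y_k$ lies in a single leaf of $\mathcal{F}^s_\tau$ or $\mathcal{F}^u_\tau$ through $y_{k-1}$ — whichever type the original segment $x_{k-1} \to x_k$ was. The key observation is that the new stable and unstable foliations $\mathcal F^{s}_\tau, \mathcal F^{u}_\tau$ are $g^\tau_t$-invariant (this is part of Proposition \ref{inv-p}), so applying the flow $g^\tau_{-t_{k-1}}$ to the one-step relation $g^\tau_{-PCF_{x_{k-1},x_k}(\tau)}(x_k) \in W^{s/u}_\tau(x_{k-1})$ gives
\[
g^\tau_{-t_{k-1}}\bigl(g^\tau_{-PCF_{x_{k-1},x_k}(\tau)}(x_k)\bigr) = g^\tau_{-t_k}(x_k) = y_k \in W^{s/u}_\tau\bigl(g^\tau_{-t_{k-1}}(x_{k-1})\bigr) = W^{s/u}_\tau(y_{k-1}),
\]
using $t_{k-1} + PCF_{x_{k-1},x_k}(\tau) = t_k$ and the inductive hypothesis $y_{k-1} = g^\tau_{-t_{k-1}}(x_{k-1})$. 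Here I am implicitly using that the new leaves $W^{s}_\tau$ and $W^{u}_\tau$ coincide (not merely as germs but globally along the orbit structure) with the stable/unstable leaves of $g^\tau_t$, which follows from the exponential contraction/expansion estimates established in the proof of Proposition \ref{inv-p}. Chaining these $n$ inclusions shows $\mathcal C' = \{y_0, y_1, \ldots, y_n\}$ is an $su$-path for $g^\tau_t$.

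The main obstacle is bookkeeping rather than a genuine difficulty: I must be careful that the additive constants $t_k$ accumulate correctly when passing from the statement "$\Phi^{s}_{x_{k-1}}(x_k) \in W^{s}_\tau(x_{k-1})$" (which compares $x_k$ to $x_{k-1}$) to the statement comparing $y_k$ to $y_{k-1}$, where both endpoints have already been flowed by $g^\tau_{-t_{k-1}}$. This is where one uses that $\alpha$ (equivalently $\beta^s,\beta^u$) transforms as a cocycle, which lets the shift by $g^\tau_{-t_{k-1}}$ pass through cleanly; one should also note that the $PCF$ functional is itself "cocycle-like" along a stable (resp. unstable) leaf, i.e.\ $PCF_{x,x'}(\tau) + PCF_{x',x''}(\tau) = PCF_{x,x''}(\tau)$ when $x,x',x''$ lie on one leaf, though in fact only the invariance of the new foliations under $g^\tau_t$ is strictly needed for the argument as organized above. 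A minor point to record is that $g^\tau_{-t_n}(x_n) = x_0$ in the cycle case if and only if $PCF(\mathcal C)(\tau) = t_n$ returns the orbit to the starting point modulo the flow direction, which is the natural compatibility one expects and which will be exploited in the proof of Theorem \ref{main-a}.
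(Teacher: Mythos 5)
Your proposal is correct and takes exactly the route the paper sketches: first identify the one‑step correspondence $\Phi^{s/u}_x$ from Proposition \ref{inv-p} with the $PCF$ shifts, then induct along the path using the $g^\tau_t$‑invariance of the new foliations to carry the accumulated shift $t_{k-1}$ through, so that $y_k=g^\tau_{-t_k}(x_k)\in W^{s/u}_\tau(y_{k-1})$. One small algebraic slip in the unstable case: from $\beta^u(x,x')=-\int_0^\infty\bigl(\tau(g_{-r}x)-\tau(g_{-r}x')\bigr)\,dr$ and $PCF_{x,x'}(\tau)=\int_0^\infty\bigl(\tau(g_{-r}x')-\tau(g_{-r}x)\bigr)\,dr$ one gets $\beta^u(x,x')=PCF_{x,x'}(\tau)$ (not $-\beta^u=PCF$ as your chain of equalities asserts); fortunately this is exactly the identity needed for your stated conclusion $\Phi^u_x(x')=g^\tau_{-\beta^u(x,x')}(x')=g^\tau_{-PCF_{x,x'}(\tau)}(x')$, so the argument stands once the intermediate display is corrected.
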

\begin{proof}
This is a direct generalization of Proposition \ref{inv-p}. By using the defined maps $\Phi^{s/u}_x$ in the proof of Proposition \ref{inv-p}, one can finish the proof by using induction. We omit the details here.
\end{proof}

A particular case is when $\mathcal C$ is a cycle ($x_0=x_n$). In this case, $$g^\tau_{-t_n}x_n=g^\tau_{-PCF(\mathcal C)}x_n,$$ this gives a hint of the importance of periodic cycle functional. We will use this observation in a crucial way later.

Let now$$\mathcal P_x:=\{\text{All }su\text{-paths }\;w.r.t.\; g_t \text{ starting at $x$}\};$$
$$\mathcal P^\tau_x:=\{\text{All }su\text{-paths }\;w.r.t.\; g^\tau_t \text{ starting at $x$}\}.$$ Proposition \ref{key-p-2} gives a correspondence of $su$-path w.r.t. $g_t$ and $su$-path w.r.t. $g_t^\tau$.

\begin{definition}\label{key-d-4}
Fix $x$, for any $su$-path $\mathcal C=\{x,x_1,\ldots,x_n\}\in \mathcal P_x$, define $\Phi_x:\mathcal P_x\to \mathcal P^\tau_x,$ $$\Phi_x(\mathcal C):=\{x,g^\tau_{-t_1}x_1,\ldots,g^\tau_{-t_n}x_n\}\in \mathcal P^\tau_x,$$where $\displaystyle t_k=\sum_{i=0}^{k-1}PCF_{x_i,x_{i+1}}(\tau)$ for $1\le k\le n$.
\end{definition}

\subsection{Sufficient conditions to accessibility}

We assume here that $g_t$ is accessible.

\begin{proposition}\label{key-p-5}
$g^\tau_t$ is accessible if for any $x\in M$, $t\in \mathbb R$, there is an $su$-path (w.r.t. $g^\tau_t$) connecting $x$ and $g_tx$.
\end{proposition}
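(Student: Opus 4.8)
The plan is to show that the accessibility class of an arbitrary point $x$ under $g_t^\tau$ is all of $M$, using the hypothesis to "swallow" the flow direction of $g_t$ and then transporting accessibility of $g_t$ through the correspondence $\Phi_x$ of Definition \ref{key-d-4}. First I would fix $x\in M$ and let $y\in M$ be arbitrary. Since $g_t$ is accessible, there is an $su$-path $\mathcal C=\{x,x_1,\dots,x_n=y\}$ with respect to $g_t$. Applying Proposition \ref{key-p-2} (equivalently $\Phi_x$), we obtain an $su$-path $\mathcal C'=\Phi_x(\mathcal C)=\{x,g^\tau_{-t_1}x_1,\dots,g^\tau_{-t_n}y\}$ with respect to $g_t^\tau$, where $t_k=\sum_{i=0}^{k-1}PCF_{x_i,x_{i+1}}(\tau)$. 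Thus $g^\tau_{-t_n}y$ lies in the $g_t^\tau$-accessibility class of $x$; equivalently, $y$ lies in the $g_t^\tau$-accessibility class of the point $g^\tau_{t_n}(g^\tau_{-t_n}y)=y$ — more usefully, $y\in AC_\tau(x)$ if and only if $g^\tau_{-t_n}y\in AC_\tau(x)$ would already give it, but $t_n$ is generally nonzero, so this is exactly where the hypothesis enters.

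The second step is to remove the shift by $t_n$. By hypothesis, for the point $z:=g^\tau_{-t_n}y$ and the time $t_n\in\mathbb R$ there is an $su$-path with respect to $g^\tau_t$ connecting $z$ and $g_{t_n}z$. Note $g_{t_n}z=g_{t_n}g^\tau_{-t_n}y$, which is a point on the $g_t$-orbit of $y$; to land exactly on $y$ I would instead apply the hypothesis to the correct point and time, namely: since $g^\tau_{-t_n}y$ and $y$ differ by flowing along the $g_t$-orbit (indeed $y=g_{s}\,(g^\tau_{-t_n}y)$ for the appropriate $s=v(g^\tau_{-t_n}y,\dots)$, or more cleanly $y=g^\tau_{t_n}(g^\tau_{-t_n}y)$ and $g^\tau_{t_n}w=g_{\alpha(w,t_n)}w$ puts $y$ on the $g_t$-orbit of $w:=g^\tau_{-t_n}y$), the hypothesis applied to $w$ with the real number $\alpha(w,t_n)$ gives an $su$-path with respect to $g_t^\tau$ from $w$ to $g_{\alpha(w,t_n)}w=y$. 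Concatenating this with $\mathcal C'$ yields an $su$-path with respect to $g_t^\tau$ from $x$ to $y$. Since $y$ was arbitrary, $g_t^\tau$ is accessible.

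The main obstacle is purely bookkeeping rather than conceptual: one must be careful that the point reached by $\Phi_x(\mathcal C)$, namely $g^\tau_{-t_n}x_n$, differs from the target $x_n=y$ precisely by a flow displacement that the hypothesis is designed to absorb, and that the composite object (the original $su$-path for $g_t$ pushed through $\Phi_x$, followed by the hypothesized $su$-path correcting the flow shift) is again a finite concatenation of stable and unstable leaf segments for $g_t^\tau$ — which it is, since a concatenation of two $su$-paths is an $su$-path. It is also worth remarking that the statement "$g_t^\tau$ is accessible" is symmetric in endpoints, so connecting $x$ to every $y$ suffices and one need not separately verify the reverse direction.
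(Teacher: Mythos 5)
Your proposal is correct and follows essentially the same route as the paper: push an $su$-path for $g_t$ from $x$ to $y$ through $\Phi_x$ to land at $g^\tau_{-t_n}y$, then use the hypothesis to close the gap along the flow orbit back to $y$. You are in fact more careful than the paper's proof in spelling out that the gap between $g^\tau_{-t_n}y$ and $y$ is a $g_t$-flow displacement $g_{\alpha(w,t_n)}w$ (with $w=g^\tau_{-t_n}y$), which is precisely the form the hypothesis needs.
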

\begin{proof}
Since $g_t$ is accessible, then for any $x,y\in M$, there is an $su$ path w.r.t. $g_t$ connects $x$ and $y$. WLOG, let $\mathcal C=\{x,x_1,\ldots, x_n,y\}$ be the path. By Proposition \ref{key-p-2} and Definition \ref{key-d-4}, $\Phi_x\mathcal C$ is an $su$-path w.r.t. $g_t^\tau$ that connects $x$ and $g^\tau_{-PCF(\mathcal C)}(y)$. Now by the assumption, there is an $su$ path $\mathcal C'$ w.r.t. $g_t^\tau$ connects $g^\tau_{-PCF(\mathcal C)}(y)$ and $y$. 
Combining $\Phi_x\mathcal C$ and $\mathcal C'$, it follows that $x$ and $y$ are in the same accessibility class of $g_t^\tau$. Since $x$ and $y$ are chosen arbitrarily, $g_t^\tau$ is accessible.
\end{proof}

By an orbit segment, we mean a set $\{g_s(y):s\in I\}$ for some $y\in M$ and a compact interval $I\subset\mathbb R$.

\begin{proposition}\label{key-p-6}
If for some $x\in M$, the accessibility class $AC(x)$ w.r.t. $g_t^\tau$ contains an orbit segment, then $g^\tau_t$ is accessible.
\end{proposition}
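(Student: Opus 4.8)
\textbf{Proof proposal for Proposition \ref{key-p-6}.}

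The plan is to reduce the statement to Proposition \ref{key-p-5}, i.e.\ to show that once a single accessibility class of $g_t^\tau$ swallows an orbit segment, every point $y$ can be joined to $g_s y$ (for all $s\in\R$) by an $su$-path with respect to $g_t^\tau$. First I would observe that the accessibility class $AC(x)$ of $g_t^\tau$ is saturated by the stable and unstable foliations $\mathcal F^s,\mathcal F^u$, which are the \emph{same} foliations (as sets, by Proposition \ref{inv-p}) for $g_t$ and for $g_t^\tau$; only their leafwise parametrizations differ. Hence $AC(x)$ with respect to $g_t^\tau$ is exactly the accessibility class of $x$ with respect to $g_t$. Since $g_t$ is assumed accessible, its unique accessibility class is all of $M$; therefore $AC(x)$ with respect to $g_t^\tau$ is contained in $M$, and the content of the hypothesis is really that this class is a \emph{nontrivial} (nonempty) object. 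Wait — that reasoning shows $AC(x)$ is always all of $M$, which would make $g_t^\tau$ trivially accessible. Let me instead not assume $g_t$ accessible here and argue directly, since Proposition \ref{key-p-6} as stated does not invoke accessibility of $g_t$ and should be a general structural fact.

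So the genuine plan is: suppose $AC(x)$ (with respect to $g_t^\tau$) contains an orbit segment $J=\{g_s(y_0):s\in I\}$, $I=[a,b]$ with $a<b$. I first want to upgrade this to: $AC(x)$ contains a full orbit $\{g_s(y_0):s\in\R\}$. For this I use that $AC(x)$ is $\mathcal F^s$- and $\mathcal F^u$-saturated together with the contraction/expansion of the flow: a point $g_a(y_0)$ and $g_{a+\epsilon}(y_0)$ differ by flowing, and I can realize the difference between $g_a(y_0)$ and a nearby point obtained by an $su$-jump inside the already-captured segment, then let the flow push this comparison forward (or backward) in time. Concretely, for $z\in W^s_\tau(g_a y_0)$ close by we have $z\in AC(x)$; flowing forward by $g^\tau_t$ contracts $d(g^\tau_t(g_a y_0), g^\tau_t z)$ while sliding both points along (roughly) the $g_t$-orbit direction, and the image of $z$ stays in $AC(x)$ since $AC$ is $g_t^\tau$-invariant. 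Combining a small $su$-path near $g_a y_0$ with flowing and with the stable holonomy again, one walks along the orbit and extends $J$ to all positive times; the unstable manifolds and $(g_t^\tau)^{-1}$ handle negative times. This is essentially the standard ``an accessibility class that contains a $c$-arc is the whole orbit'' argument adapted to the time-changed flow.

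Next, once $AC(x)$ contains the entire $g_t$-orbit of $y_0$, I claim $AC(x)=M$. Indeed the orbit of $y_0$ is a one-dimensional center leaf, and for any $w\in M$ I can approach $w$ by a chain: pick a point on the orbit of $y_0$, apply an unstable holonomy, then a stable holonomy, etc., using that $\mathcal F^s,\mathcal F^u$ together with the center orbit locally span a neighborhood (this is where partial hyperbolicity, via $E^s\oplus E^c\oplus E^u=TM$, enters); iterating and using compactness of $M$ gives $AC(x)=M$. Alternatively, and more cleanly, I would invoke Proposition \ref{key-p-5}: having the whole orbit of $y_0$ inside one accessibility class means in particular that $y_0$ and $g_s y_0$ are $su$-connected for all $s$; by the cocycle/holonomy correspondence of Proposition \ref{key-p-2} and homogeneity of the argument (the construction near $y_0$ transports to any point of its accessibility class, which we are trying to show is everything), one gets the hypothesis of Proposition \ref{key-p-5} at every point, hence accessibility of $g_t^\tau$.

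The main obstacle I expect is the first upgrade step — promoting ``contains an orbit segment'' to ``contains a full orbit'' and then to ``equals $M$'' — because it requires carefully combining the leafwise (stable/unstable) moves with the flow direction while controlling that the comparison points genuinely lie in the same $g_t^\tau$-accessibility class; the bookkeeping of which parametrization ($g_t$ vs.\ $g_t^\tau$) governs each sub-path, via the functions $\beta^{s/u}$ and the cocycle $\alpha$, is the delicate part. The rest is soft: invariance of accessibility classes under the flow and the reduction to Proposition \ref{key-p-5} are formal.
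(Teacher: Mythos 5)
Your proposal has the right shape in places but contains a genuine gap and a misreading that would make the argument incorrect as written.

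First, the misreading. You explicitly decide to ``not assume $g_t$ accessible here,'' on the grounds that the proposition ``as stated does not invoke accessibility of $g_t$.'' But Proposition~\ref{key-p-6} sits in the subsection that opens with the standing hypothesis ``We assume here that $g_t$ is accessible,'' and this hypothesis is essential. Without it the statement is false: take $g_t$ to be an Anosov flow crossed with the identity flow on a circle. Then the accessibility class $AC(x)$ of any $x=(p,\theta)$ is the whole Anosov factor times $\{\theta\}$, which does contain the entire $g_t$-orbit of $x$, yet the flow is not accessible; the trivial time change $\tau\equiv 1$ then gives a counterexample. So any correct proof must use that $g_t$ is accessible, and yours cannot, as written.

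Second, the route to $AC(x)=M$ does not close. Your ``local spanning'' argument is not valid: $E^s\oplus E^c\oplus E^u=TM$ is automatic from partial hyperbolicity and says nothing about accessibility (it is also true, e.g., when $E^s\oplus E^u$ integrates to a codimension-one foliation). Your alternative, invoking Proposition~\ref{key-p-5}, is circular and you even say so in the parenthetical: you would need the hypothesis of Proposition~\ref{key-p-5} at every point, but the transport of the $su$-cycle near $y_0$ to another point only works for points already in $AC(y_0)$, which is precisely the set you are trying to show is all of $M$. Also note for completeness that your opening observation --- that the $g_t$- and $g_t^\tau$-stable foliations coincide as sets --- is incorrect (Proposition~\ref{inv-p} says the new leaves are graphs over the old ones along the flow direction, not equal to them); you do catch and discard this, but it is worth being clear that the two foliations genuinely differ.

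What the paper does is simpler and breaks the circularity by using the accessibility of $g_t$ directly. Step one: since accessibility classes are $g_t^\tau$-invariant and $g_t^\tau$-orbits coincide as sets with $g_t$-orbits, repeatedly pushing the captured orbit segment by $g_{\pm t}^\tau$ (small $t$) extends it in both directions, so $AC(x)$ contains the full $g_t$-orbit of $x$. This is pure flow invariance; no $su$-holonomy interplay is needed. Step two: for arbitrary $y\in M$, use accessibility of $g_t$ to choose an $su$-path $\mathcal C$ from $y$ to $x$ with respect to $g_t$, and apply the correspondence $\Phi_y$ of Proposition~\ref{key-p-2}/Definition~\ref{key-d-4} to obtain an $su$-path with respect to $g_t^\tau$ from $y$ to $g^\tau_{-PCF(\mathcal C)}(x)$. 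The latter point lies on the orbit of $x$, hence in $AC(x)$ by step one, so $y\in AC(x)$. Since $y$ was arbitrary, $g_t^\tau$ is accessible. Your first step (segment to full orbit) is in the right spirit but over-engineered; your second step is the one that needs to be replaced by the $\Phi_y$-transport argument, which requires the accessibility of $g_t$ you set aside.
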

\begin{proof}
By the assumption, it is easy to see by translating along the orbit
that the accessibility class $AC(x)$  w.r.t. $g_t^\tau$ contains the whole orbit of $x$, namely $\{g_t(x):t\in\mathbb R\}\subset AC(x)$. For any $y\in M$, let $\mathcal C=\{y,x_1,\ldots, x_n,x\}$ be an $su$ path connects $y$ and $x$. Similar to the proof of Proposition \ref{key-p-5}, we have $y\in AC(x)$. Thus $g_t^\tau$ is accessible.
\end{proof}

\begin{lemma}\label{le-every}
For any $x$, the accessibility class $AC(x)$ w.r.t. $g_t^\tau$ has nonempty intersection with every orbit. 
\end{lemma}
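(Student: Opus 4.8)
The plan is to use the accessibility of $g_t$ together with the correspondence of $su$-paths from Proposition \ref{key-p-2} (equivalently, the map $\Phi_x$ of Definition \ref{key-d-4}). Fix $x\in M$ and let $y\in M$ be arbitrary; it suffices to show that $AC(x)$ w.r.t. $g^\tau_t$ meets the $g_t$-orbit of $y$, since $g_t$ and $g^\tau_t$ have the same orbits (as unparametrized curves). Because $g_t$ is accessible, there is an $su$-path $\mathcal C=\{x,x_1,\ldots,x_n,y\}$ w.r.t. $g_t$ joining $x$ to $y$. Applying $\Phi_x$, Proposition \ref{key-p-2} gives an $su$-path $\Phi_x(\mathcal C)$ w.r.t. $g^\tau_t$ starting at $x$ and ending at $g^\tau_{-PCF(\mathcal C)(\tau)}(y)$. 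Since $g^\tau_{-PCF(\mathcal C)(\tau)}(y)$ lies on the $g_t$-orbit of $y$, the point $y$'s orbit intersects $AC(x)$, which is exactly the assertion.

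The only point requiring care is the bookkeeping in Proposition \ref{key-p-2}: one must check that the endpoint of $\Phi_x(\mathcal C)$ is indeed the single point $g^\tau_{-t_n}(y)=g^\tau_{-PCF(\mathcal C)(\tau)}(y)$ on the orbit through $y$, and in particular that the translation parameter $t_n=\sum_{i=0}^{n-1}PCF_{x_i,x_{i+1}}(\tau)$ is finite. Finiteness follows because each summand $PCF_{x_i,x_{i+1}}(\tau)$ is an absolutely convergent integral, by the exponential contraction/expansion along stable/unstable leaves combined with smoothness (hence Lipschitz continuity) of $\tau$, exactly as in the finiteness of $\beta^{s/u}$ in the proof of Proposition \ref{inv-p}. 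There is no genuine obstacle here: the lemma is an immediate consequence of accessibility of $g_t$ and the $su$-path correspondence, and its role is only to record that accessibility classes of the time change are ``large'' in the flow direction — this is what will be upgraded in the subsequent results (e.g. via Proposition \ref{key-p-6}) to full accessibility of $g^\tau_t$.
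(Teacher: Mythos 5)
Your proof is correct and takes essentially the same route as the paper's: the paper disposes of this lemma in one line by pointing to the proofs of Propositions \ref{key-p-5} and \ref{key-p-6}, whose shared key step is exactly yours, namely applying $\Phi_x$ to an $su$-path (w.r.t.\ $g_t$) from $x$ to $y$ to produce an $su$-path (w.r.t.\ $g^\tau_t$) from $x$ to $g^\tau_{-PCF(\mathcal C)(\tau)}(y)$, a point on the orbit of $y$. The extra remark on finiteness of $t_n$ is a sound (if already implicit) sanity check.
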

\begin{proof}
This follows similarly from the proofs of previous two propositions.
\end{proof}

\begin{proposition}\label{key-p-a}
For any $x$, the accessibility class $AC(x)$ w.r.t. $g_t^\tau$ is either the whole manifold, or a codimension one topological sub-manifold. 
\end{proposition}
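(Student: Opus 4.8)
The plan is to show that $AC(x)$ with respect to $g^\tau_t$ is a $C^0$-immersed submanifold whose dimension is either $\dim M$ (in which case accessibility follows from Proposition~\ref{key-p-6}) or $\dim M - 1$. The key structural input is the correspondence $\Phi_x\colon \mathcal P_x\to\mathcal P^\tau_x$ from Definition~\ref{key-d-4}, together with the fact (Lemma~\ref{le-every}) that $AC(x)$ meets every $g_t$-orbit. Since $g_t$ is accessible, $AC_{g_t}(x) = M$, so every point $y\in M$ is reached from $x$ by some $su$-path $\mathcal C$ with respect to $g_t$, and $\Phi_x\mathcal C$ connects $x$ to $g^\tau_{-PCF(\mathcal C)}(y)$ within $AC(x)$. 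Thus $AC(x)$ is obtained from $M$ by sliding each point along its $g_t$-orbit by the (multivalued) amount $-PCF(\mathcal C)$; the topology and dimension of $AC(x)$ are governed by how much the value $PCF(\mathcal C)$ can vary as $\mathcal C$ ranges over $su$-paths from $x$ to a fixed endpoint $y$.

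First I would recall the standard fact from the theory of partially hyperbolic systems (due to the work underlying Theorem~\ref{bw}, or more classically to the accessibility literature) that, since $g^\tau_t$ is dynamically coherent and center bunched, each accessibility class $AC(x)$ is a $C^0$-injectively-immersed submanifold of $M$, and moreover is either open (hence all of $M$, since accessibility classes partition $M$ and each is closed in the relevant sense) or has codimension at least one. So the content is to rule out codimension $\geq 2$. Here I would use the homogeneous/flow structure: the center direction for $g^\tau_t$ is one-dimensional (it is spanned by $X$, the generator of $g_t$), and $AC(x)$ always contains, through any of its points $z$, the full leaves $\mathcal F^{\tilde s}_\tau(z)$ and $\mathcal F^{\tilde u}_\tau(z)$. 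Hence the tangent space to $AC(x)$ at any point contains $\tilde E^s \oplus \tilde E^u$, which is a codimension-one subbundle (its complement being the one-dimensional center $\langle X\rangle$). Therefore $AC(x)$ has dimension at least $\dim M - 1$, giving the dichotomy.

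The main step requiring care is establishing that $AC(x)$ is genuinely a topological submanifold rather than merely a subset containing all stable and unstable leaves through its points — i.e., local structure. For this I would argue as in Proposition~\ref{key-p-5}/\ref{key-p-6}: locally near a point $z\in AC(x)$, a short $su$-path stays close to $z$, and by local accessibility of $g_t$ (which transfers to $g^\tau_t$ via $\Phi$, since the correction times $t_k$ are controlled by exponentially convergent $PCF$ integrals and hence small when the path is short) the set of points reachable from $z$ by short $su$-paths fills out a neighborhood of $z$ inside the set $\{g^\tau_s(w) : w \in W^{\tilde s}_\tau W^{\tilde u}_\tau(z),\ |s|\ \text{small}\}$ intersected with the level set of the ``net $PCF$'' function. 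Concretely: define on a small neighborhood the local holonomy-type map sending $y$ to the $g_t$-orbit-shift $-PCF(\mathcal C_y)$ for a canonically chosen short path $\mathcal C_y$; then $AC(x)\cap U$ is the graph over the $(\dim M - 1)$-dimensional ``$su$-slice'' of either a single-valued continuous function (codimension one) or, if the $PCF$ ambiguity vanishes locally, the whole slab is contained, forcing openness.

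The hard part will be controlling the multivaluedness of $PCF(\mathcal C)$ as a function of the endpoint — that is, understanding the image of the ``period map'' $\mathcal C \mapsto PCF(\mathcal C)$ restricted to $su$-cycles based at $x$ (this image is exactly the set of orbit-shifts by which $AC(x)$ can fail to be a graph). If that image is dense in $\mathbb R$ then $AC(x)$ contains an orbit segment and we are in the accessible case by Proposition~\ref{key-p-6}; if it is $\{0\}$ (or discrete and, by a connectedness/continuity argument, necessarily $\{0\}$) then $AC(x)$ is a genuine codimension-one graph. Showing there is no intermediate behavior — and in particular that a nonzero discrete period subgroup cannot occur, or if it can must still yield a codimension-one submanifold — is the delicate point; I expect it to hinge on the continuity of $y\mapsto PCF$ along stable/unstable leaves established in Proposition~\ref{inv-p} and on the fact that $su$-leaves are connected, so that the period set is a connected subset (hence interval) of $\mathbb{R}$ containing $0$, forcing it to be either $\{0\}$ or dense.
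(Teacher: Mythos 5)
The critical step in your proposal — the claim that ``the period set is a connected subset (hence interval) of $\mathbb{R}$ containing $0$, forcing it to be either $\{0\}$ or dense'' — does not hold. The set of values $\{PCF_{\mathcal C}(\tau) : \mathcal C \text{ an } su\text{-cycle at } x\}$ is, by concatenation and reversal of cycles, a \emph{subgroup} of $\mathbb{R}$, and subgroups of $\mathbb{R}$ are never connected intervals other than $\{0\}$ and $\mathbb{R}$. Connectedness of the space of cycles does not transfer to the image: concatenation is not a continuous path of cycles from the trivial cycle to a nontrivial one (the number of legs jumps), so you cannot invoke continuity of $PCF$ to conclude the image is an interval. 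In fact, nothing in your argument rules out the case where this subgroup is nontrivial and discrete, say $c\mathbb{Z}$ with $c>0$ — which is precisely the problematic case, since then $AC(x)$ contains neither an orbit segment (so Proposition~\ref{key-p-6} does not apply) nor is it a graph over the $su$-slice (since each $su$-slice meets $AC(x)$ in a $c\mathbb{Z}$-orbit of points). Moreover, a countable dense period group (e.g. $\mathbb{Z} + \sqrt{2}\mathbb{Z}$) also cannot be excluded and would make $AC(x)$ dense without being all of $M$; your argument offers no way to handle this.

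The paper's proof resolves exactly this difficulty by passing to the universal cover $\widetilde{M}$. Lifted $su$-paths starting at a fixed lift of $x$ trace out a connected lifted set $\widetilde{AC}(x)$, and the associated return-time group $\tilde H$ (coming only from homotopically trivial cycles downstairs) is shown to be either $\{0\}$ or $\mathbb{R}$ by regarding $\widetilde{AC}(x)$ as an $\tilde H$-bundle over $\widetilde{M}/\langle g_t^\tau\rangle$ and using connectedness of $\widetilde{AC}(x)$: a nontrivial discrete fiber would disconnect the total space. The codimension-one submanifold structure of $AC(x)$ is then read off from the homeomorphism of $\widetilde{AC}(x)$ with $\widetilde{M}/\langle g_t^\tau\rangle$ in the case $\tilde H = \{0\}$. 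None of this requires, as your proposal does at the outset, the assertion that accessibility classes of center-bunched dynamically coherent systems are automatically $C^0$-immersed submanifolds — which is essentially the content of the proposition itself (and is a deep theorem in other contexts, e.g. work of Rodriguez Hertz--Rodriguez Hertz--Ures or the results alluded to in the remark citing \cite{W}), so invoking it as a ``standard fact'' is circular here. The dimension-count heuristic (tangent space contains $\tilde E^s \oplus \tilde E^u$) only makes sense once the manifold structure is in hand; it cannot substitute for establishing it.
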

\begin{proof}
Observe that $g_t^\tau (AC(x))=AC(g_t^\tau(x))$ for any $x\in M$ and $t\in \mathbb R$. This means, in particular that for any $t\neq s$,
\begin{equation}\label{eq-dic}
 \text{ either }AC(g_t^\tau(x))=AC(g_s^\tau(x))\text{ or }AC(g_t^\tau(x))\cap AC(g_s^\tau(x))= \emptyset.
 \end{equation}Let $H(x)=\{t:g_t^\tau(x)\in AC(x)\}$. Then $H(x)$ is a group of $\R$, which is either discrete
(possibly dense) or equal to $\mathbb R$. We have that $H(x)$ is a group independent of $x$. Indeed, it is obvious to see that $H(x)$ is the same as long as $x$ lies in one orbit, then the claim follows from Lemma \ref{le-every} that it is also the same for different orbits.
 
 Let $\widetilde M$ be the universal cover of $M$, and for any $su$-path (w.r.t. $g_t$) on $M$, lift it to $\widetilde M$. Composing with $\Phi_x$, we obtain $su$-paths (w.r.t. $g_t^\tau$) on $\widetilde M$. This way, we can lift $AC(y)$ (w.r.t. $g_t^\tau$) to a set $\widetilde{AC}(y)$ in $\widetilde M$ satisfying that $y\in \widetilde{AC}(y)$. Notice that $\widetilde{AC}(y)$ is connected and $su$-path connected, and it is saturated by the lifted stable/unstable manifolds (w.r.t. $g_t^\tau$). Moreover, by Lemma \ref{le-every}, $\widetilde{AC}(y)$ has nonempty intersection with every lifted $g_t$ orbit on $\widetilde M$. 

Let $\tilde{H}(x)=\{t:g_t^\tau(x)\in \widetilde{AC}(x)\}$. Then $\tilde{H}(x)$ is a group of $\R$, which is either discrete (possibly dense) or equal to $\mathbb R$. Also, it is easy to see that $\tilde{H}(x)$ is a subgroup of $H(x)$. Now by \eqref{eq-dic}, similarly to the argument of $H(x)$, $\tilde{H}(x)$ is independent of the choice of $x$. Denote by $\tilde{H}$ 
the common value of $\tilde H(x)$
for all $x.$

Now consider the quotient space $\widetilde{AC}(x)/\langle g^\tau_t:t\in \tilde H\rangle$. It is a topological space with topology inducing from $\widetilde{AC}(x)$. Moreover, since $\widetilde{AC}(x)$ has nonempty intersection with any $g^\tau_t$ orbit, it follows that  $\widetilde{AC}(x)/\langle g^\tau_t:t\in \tilde H\rangle$ is homeomorphic to $\widetilde M/\langle g_t^\tau:t\in\R\rangle$. Therefore we can regard $\widetilde{AC}(x)$ as a $\tilde H$ bundle over $\widetilde M/\langle g_t^\tau:t\in\R\rangle$. Since $\widetilde{AC}(x)$ is connected, $\tilde H$ can not be discrete, which means that either $\tilde H=\{0\}$ or $\tilde H=\R$.

When $\tilde H=\R$, it implies that there is only one accessibility class, hence  $AC(x)$ is the whole manifold. If $\tilde H=\{0\}$ (in this case for any $x\in M$, $H(x)$ is discrete and countable), then $AC(x)$ is a codimension one topological manifold, as its lift $\widetilde{AC}(x)$ is homeomorphic to $\widetilde M/\langle g_t^\tau:t\in\R\rangle$.
\end{proof}

\begin{remark}
We believe, in the latter case of Proposition \ref{key-p-a}, one may establish the smoothness of the manifold, by the argument in \cite{W}.
\end{remark}

\begin{proposition}\label{key-p-7}
Let $\mathcal C$ be an $su$-path w.r.t. $g^\tau_t$ that connects $x$ to $g_t(x)$. Assume that $g_t(x)\neq x$. If $\mathcal C$ is homotopic to the orbit segment starting from $x$ to $g_t(x)$, then $g^\tau_t$ is accessible.
\end{proposition}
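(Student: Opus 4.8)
The plan is to invoke the dichotomy of Proposition~\ref{key-p-a} and use the homotopy hypothesis to exclude the codimension-one alternative. By Proposition~\ref{key-p-a}, $AC(x)$ (with respect to $g^\tau_t$) is either all of $M$ --- in which case $g^\tau_t$ is accessible and we are done --- or a codimension-one topological submanifold, in which case, in the notation of that proof, $\tilde H=\{0\}$, where $\tilde H$ denotes the common value of $\tilde H(y)=\{r\in\mathbb R:g^\tau_r(\tilde y)\in\widetilde{AC}(\tilde y)\}$ (independent of the point $y$ and of the chosen lift $\tilde y$). So it suffices to produce a nonzero element of $\tilde H$.

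First I would rewrite the orbit segment in the $g^\tau$-parametrization. Since $g_t(x)\neq x$ we have $t\neq 0$, hence $s:=\int_0^t\tau(g_r x)\,dr\neq 0$ because $\tau>0$, and $g_t(x)=g^\tau_{s}(x)$. Thus $\mathcal{C}$ is an $su$-path for $g^\tau_t$ joining $x$ to $g^\tau_{s}(x)$ with $s\neq 0$, and --- since homotopy is insensitive to reparametrization --- it is homotopic rel endpoints to the path $r\mapsto g^\tau_r(x)$, $r\in[0,s]$.

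Next I would pass to the universal cover $\widetilde M$, fixing a lift $\tilde x$ of $x$. The stable and unstable foliations of $g^\tau_t$ lift to those of the lifted time change, so the lift $\tilde{\mathcal{C}}$ of $\mathcal{C}$ starting at $\tilde x$ is again an $su$-path in $\widetilde M$, and hence its endpoint lies in $\widetilde{AC}(\tilde x)$. On the other hand, the lift of the orbit segment $r\mapsto g^\tau_r(x)$ starting at $\tilde x$ is $r\mapsto \tilde g^\tau_r(\tilde x)$, which ends at $\tilde g^\tau_{s}(\tilde x)$. By the homotopy lifting property, a homotopy rel endpoints between $\mathcal{C}$ and this orbit segment lifts to a homotopy between $\tilde{\mathcal{C}}$ and the lifted orbit segment, so the two lifts share an endpoint: $\tilde{\mathcal{C}}$ ends at $\tilde g^\tau_{s}(\tilde x)$. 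Hence $\tilde g^\tau_{s}(\tilde x)\in\widetilde{AC}(\tilde x)$, i.e.\ $s\in\tilde H$ with $s\neq 0$, forcing $\tilde H=\mathbb R$ and therefore, by the last paragraph of the proof of Proposition~\ref{key-p-a}, accessibility of $g^\tau_t$.

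The step requiring the most care --- the main obstacle --- is the lifting argument: one must make sure that $\widetilde{AC}(\tilde x)$, as constructed in the proof of Proposition~\ref{key-p-a}, genuinely coincides with the accessibility class of $\tilde x$ for the lifted partially hyperbolic structure (so that the assertion ``the endpoint of $\tilde{\mathcal{C}}$ lies in $\widetilde{AC}(\tilde x)$'' is legitimate), and that the hypothesis is used as homotopy \emph{rel endpoints}, not merely free homotopy, which is exactly what guarantees that the lifts of $\mathcal{C}$ and of the orbit segment terminate at the same point. The remaining ingredients --- the change of time parametrization and the positivity of $\tau$ --- are routine.
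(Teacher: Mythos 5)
Your proof follows essentially the same route as the paper's: lift $\mathcal C$ and the orbit segment to the universal cover, use the rel-endpoints homotopy to conclude they share an endpoint, and hence obtain a nonzero element of $\tilde H$, forcing $\tilde H=\mathbb R$ by the dichotomy established in the proof of Proposition~\ref{key-p-a}. Your version is somewhat more explicit (spelling out the reparametrization $s=\int_0^t\tau(g_r x)\,dr$ and the lifting argument that the paper compresses into ``As in the proof of Proposition~\ref{key-p-a}''), but the logical skeleton is identical.
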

\begin{proof}
Let $\tilde{ \mathcal C}$ be the lifted path of $\mathcal C$ on the universal cover fixing $x$. Notice that by the homotopy assumption, $\tilde{\mathcal C}$ starts at $x$ and ends at $g_t(x)$. As in the proof of Proposition \ref{key-p-a}, this implies $\tilde H=\R$. Hence it follows that the whole manifold is one accessibility class. Therefore $g_t^\tau$ is accessible.
\end{proof}

\begin{proposition}\label{key-p-8}
If $g_t$ is accessible, and $g^\tau_t$ is not accessible, then $PCF(\tau)=0$ for all homotopically trivial $su$ cycles w.r.t. $g_t$. 
\end{proposition}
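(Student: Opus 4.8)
The plan is to argue by contradiction. Assume $g_t$ is accessible, $g^\tau_t$ is \emph{not} accessible, yet there exists a homotopically trivial $su$-cycle $\mathcal C=\{x_0,x_1,\ldots,x_n=x_0\}$ with respect to $g_t$ for which $c:=PCF(\mathcal C)(\tau)\neq 0$. The idea is that, via the correspondence $\Phi$ of Proposition~\ref{key-p-2} and Definition~\ref{key-d-4}, such a cycle is transported to an $su$-path with respect to $g^\tau_t$ that joins $x_0$ to the point $g^\tau_{-c}(x_0)$ of its own orbit; because $\mathcal C$ was contractible, this displacement takes place ``purely in the orbit direction''. By the dichotomy extracted in the proof of Proposition~\ref{key-p-a}, this is exactly what forces an entire orbit — and hence, combined with accessibility of $g_t$, the whole manifold — into a single $g^\tau_t$-accessibility class, contradicting non-accessibility.

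To make ``purely in the orbit direction'' precise I would pass to the universal cover $\pi:\widetilde M\to M$, exactly as in the proof of Proposition~\ref{key-p-a}. Lift $\mathcal C$ to an $su$-path $\tilde{\mathcal C}$ with respect to the lifted flow $g_t$, based at a chosen lift $\tilde x_0$ of $x_0$; since $\mathcal C$ is homotopically trivial, its lift closes up, so the terminal point of $\tilde{\mathcal C}$ is again $\tilde x_0$. Now apply the correspondence $\Phi$ to $\tilde{\mathcal C}$ on $\widetilde M$ — legitimate because the flow, the function (lifted to $\tilde\tau=\tau\circ\pi$), the stable/unstable manifolds and the periodic cycle functional are all $\pi$-equivariant, so in particular $PCF_{\tilde x_i,\tilde x_{i+1}}(\tilde\tau)=PCF_{x_i,x_{i+1}}(\tau)$ and the shifts $t_k$ of Definition~\ref{key-d-4} are unchanged. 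By Proposition~\ref{key-p-2} the result is an $su$-path with respect to the lifted $g^\tau_t$ running from $\tilde x_0$ to $g^\tau_{-c}(\tilde x_0)$, and by uniqueness of path lifting it is exactly the lift of $\Phi_{x_0}(\mathcal C)$ starting at $\tilde x_0$. In particular it is one of the paths used in the proof of Proposition~\ref{key-p-a} to build the lifted accessibility class $\widetilde{AC}(\tilde x_0)$, whence $g^\tau_{-c}(\tilde x_0)\in\widetilde{AC}(\tilde x_0)$, i.e.\ $-c\in\tilde H(\tilde x_0)=\tilde H$. Since $g^\tau_t$ is not accessible, the proof of Proposition~\ref{key-p-a} gives $\tilde H\neq\R$, hence $\tilde H=\{0\}$, so $c=0$ — a contradiction. (One can also avoid the cover by checking that $\Phi_{x_0}(\mathcal C)$ is homotopic rel endpoints to the orbit segment from $x_0$ to $g^\tau_{-c}(x_0)$ and invoking Proposition~\ref{key-p-7}, but the cover formulation is cleaner and handles all cases, e.g.\ a periodic base point, uniformly.)

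The only genuinely non-routine point is the middle step: verifying carefully that ``lift $\mathcal C$, then apply $\Phi$'' produces precisely the lift of $\Phi_{x_0}(\mathcal C)$, including that each leg of the lifted path is a lift of the corresponding leg of $\Phi_{x_0}(\mathcal C)$ and lies in a single lifted stable or unstable leaf of the lifted $g^\tau_t$. This reduces to the $\pi$-equivariance of each map $\Phi^{s/u}_x$ and each number $\beta^{s/u}(x,y)$ — immediate from their definition as orbit integrals of $\tau$ — together with uniqueness of path lifting; once this is in place, the identification with a defining path of $\widetilde{AC}(\tilde x_0)$ and the appeal to the group $\tilde H$ from Proposition~\ref{key-p-a} are formal. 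I expect the bookkeeping of base points and of the homotopy classes of the individual $su$-legs upstairs to be the fiddliest part, but not a conceptual obstacle.
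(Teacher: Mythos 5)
Your proof is correct and takes essentially the same route as the paper: the paper applies $\Phi_x$ to $\mathcal C$ to obtain an $su$-path for $g^\tau_t$ from $x$ to $g^\tau_{-PCF_{\mathcal C}(\tau)}(x)$ and then invokes Proposition~\ref{key-p-7}, while you inline that proposition's universal-cover argument and phrase the conclusion as $-PCF_{\mathcal C}(\tau)\in\tilde H=\{0\}$. Your version is in fact slightly more careful, since it sidesteps the hypothesis $g_t(x)\neq x$ of Proposition~\ref{key-p-7} (needed to rule out, e.g., a periodic base point with $PCF_{\mathcal C}(\tau)$ a multiple of the period) by working directly with $\tilde H$ in the cover.
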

\begin{proof}
Assume there is a homotopically trivial $su$ cycle $\cC$, such that $PCF_\cC(\tau)\neq 0$.  Assume the cycle starts at $x$. Then after mapping $\cC$ by $\Phi_x$ to a $su$ path of $g^\tau_t$, it follows by Proposition \ref{key-p-2} that $x$ (the last point of $\cC$) will be mapped to $g^\tau_{-PCF_\cC(\tau)}(x)$. Therefore we obtain an $su$ path w.r.t. $g^\tau_t$ that satisfies the condition in Proposition \ref{key-p-7}. We then obtain the accessibility of $g^\tau_t$, contradicting to our assumption.
\end{proof}

The following is a generalization of the previous proposition, which plays an important role in our proof of accessibility.
\begin{proposition}\label{key-p-9}
Assume that $g_t$ is accessible, and $g^\tau_t$ is not accessible. Let $\mathcal C$ be an $su$ path w.r.t. $g_t$ that starts at $x$ and ends at $g_r(x)$ for some $r\ge 0$. Suppose $\mathcal C$ is homotopic to the orbit segment starting from $x$ to $g_r(x)$. Then $$PCF_{\mathcal C}(\tau)=\int_0^r\tau(g_t x)dt.$$
\end{proposition}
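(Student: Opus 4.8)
The plan is to lift the whole picture to the universal cover and keep careful track of the accumulated flow‑time, exactly as in the proof of Proposition~\ref{key-p-a}, but without discarding the contribution $\int_0^r\tau(g_t x)\,dt$. Throughout write $v:=\int_0^r\tau(g_t x)\,dt$. Recall from \S\ref{S-P} that $v=v(x,r)$ is precisely the $g^\tau$‑time needed to travel from $x$ to $g_r(x)$ along the orbit, i.e. $\alpha(x,v)=r$ and hence $g_r(x)=g^\tau_v(x)$; this identity lifts verbatim to the universal cover.

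Let $\pi\colon\widetilde M\to M$ be the universal cover, let $\tilde g_t$ and $\tilde g^\tau_t$ be the lifted flows (the latter being the time change of $\tilde g_t$ by $\tilde\tau:=\tau\circ\pi$), and fix a lift $\tilde x$ of $x$. First I would lift the $su$‑path $\mathcal C$ (with respect to $g_t$) to an $su$‑path $\tilde{\mathcal C}$ with respect to $\tilde g_t$ starting at $\tilde x$; since stable and unstable leaves are simply connected, this lift is canonically determined and its consecutive segments lie in single lifted leaves. Because $\mathcal C$ is homotopic rel endpoints to the orbit segment $\{g_t x:0\le t\le r\}$, the terminal point of $\tilde{\mathcal C}$ is the corresponding terminal point of the lifted orbit segment, namely $\tilde g_r(\tilde x)=\tilde g^\tau_{v}(\tilde x)$. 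Next I would apply the map $\Phi_{\tilde x}$ of Definition~\ref{key-d-4}, now built on $\widetilde M$ with the lifted data, to $\tilde{\mathcal C}$. The book‑keeping point is that each periodic cycle functional is unchanged under lifting, $PCF_{\tilde x_i,\tilde x_{i+1}}(\tilde\tau)=PCF_{x_i,x_{i+1}}(\tau)$, since these are integrals of $\tau$ along the stable/unstable orbit branches and $\tilde\tau=\tau\circ\pi$; hence the total shift is $PCF_{\mathcal C}(\tau)$. Therefore $\Phi_{\tilde x}(\tilde{\mathcal C})$ is an $su$‑path with respect to $\tilde g^\tau_t$ joining $\tilde x$ to $\tilde g^\tau_{-PCF_{\mathcal C}(\tau)}(\tilde g^\tau_{v}(\tilde x))=\tilde g^\tau_{\,v-PCF_{\mathcal C}(\tau)}(\tilde x)$. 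In the notation of the proof of Proposition~\ref{key-p-a}, this says exactly that $v-PCF_{\mathcal C}(\tau)\in\tilde H$, because $\tilde g^\tau_{\,v-PCF_{\mathcal C}(\tau)}(\tilde x)$ lies in $\widetilde{AC}(\tilde x)$.

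Finally, since $g^\tau_t$ is assumed not accessible, the dichotomy established in the proof of Proposition~\ref{key-p-a} (where $\tilde H=\mathbb R$ forces a single accessibility class, i.e. accessibility) forces $\tilde H=\{0\}$. Hence $v-PCF_{\mathcal C}(\tau)=0$, that is $PCF_{\mathcal C}(\tau)=v=\int_0^r\tau(g_t x)\,dt$, as desired. The only genuinely delicate points, and the place where I expect to spend the most care, are (i) that lifting an $su$‑path and then applying the $\Phi$‑construction agrees with applying $\Phi$ first and then lifting — equivalently, the invariance of $PCF$ under passage to the cover — and (ii) that the homotopy hypothesis pins down the terminal lift of $\tilde{\mathcal C}$ as $\tilde g_r(\tilde x)$ rather than some other lift of $g_r(x)$; both are routine covering‑space/path‑lifting arguments once one uses that $g_t$ and $g^\tau_t$ share their orbit foliation and their center‑stable/center‑unstable foliations. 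With these in hand, the statement is just the quantitative sharpening of Propositions~\ref{key-p-7} and \ref{key-p-8} obtained by not throwing away the flow‑time term.
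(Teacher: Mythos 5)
Your proposal is correct and takes essentially the same approach as the paper. The paper applies $\Phi_x$ to $\mathcal C$ on $M$, tracks the endpoint to obtain an $su$-path from $x$ to $g^\tau_{\,-PCF_{\mathcal C}(\tau)+R}(x)$ with $R=\int_0^r\tau(g_t x)\,dt$, and then cites Proposition~\ref{key-p-7} to force $-PCF_{\mathcal C}(\tau)+R=0$; you obtain the identical conclusion by lifting to $\widetilde M$, applying $\Phi_{\tilde x}$ there, and invoking the $\tilde H$-dichotomy from Proposition~\ref{key-p-a} directly, which is exactly what the proof of Proposition~\ref{key-p-7} does internally, so the two are the same argument packaged differently.
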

\begin{proof}
Arguing similarly to Proposition \ref{key-p-8}, we can map $\cC$ by $\Phi_x$ to obtain an $su$ path $\tilde\cC$ w.r.t. $g^\tau_t$. Here, by Proposition \ref{key-p-2}, $\tilde\cC$ starts at $x$ and ends at $g^\tau_{-PCF_\cC(\tau)}(g_rx)$. Let $R=\int_0^r\tau(g_t x)dt$. Then $g_r(x)=g^\tau_R(x)$. Hence $\tilde\cC$ ends at  $g^\tau_{-PCF_\cC(\tau)}(g_r(x))=g^\tau_{-PCF_\cC(\tau)+R}(x)$. Now by Proposition \ref{key-p-7}, we must have $-PCF_\cC(\tau)+R=0$, as desired.
\end{proof}

\section{Accessibility of smooth time changes}
\label{S-A}

\subsection{Time changes of homogeneous flows}
We assume $G$ is a (real or complex) semisimple Lie group without compact factors, $\Gamma$ is a cocompact lattice. Given an element $X\in$ Lie$(G)$, and assume it induces a partially hyperbolic flow $g_t$ (equivalently, $Ad(\exp(X))$ has at least one eigenvalue of modulus different from one.). Suppose $g_t$ is accessible, $m$ is the Haar measure. Let $\tau:G/\Gamma\to \mathbb R_+$ be a smooth function, and $\tau_0=\int_{G/\Gamma}\tau\ dm$. And denote $g_t^\tau$ the time change of $g_t$ by $\tau$. 

Let  $G^{s}$ ($G^u$ respectively) be the stable (unstable respectively) horospherical group of $g_t$, namely
$$G^{s}:=\{h\in G: \exp(tX)h\exp(-tX)\to Id,\;\text{as}\; t\to+\infty\},$$$$G^{u}:=\{h\in G:  \exp(tX)h\exp(-tX)\to Id,\;\text{as}\; t\to-\infty\}.$$

We are ready to prove Theorem \ref{main-a} in the following.

\begin{theorem}\label{new}
$g_t^\tau$ is accessible.
\end{theorem}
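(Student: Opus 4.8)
The plan is to apply the sufficient conditions developed in Section~\ref{S-PA}, in particular Proposition~\ref{key-p-9} (or rather its contrapositive), and to derive a contradiction from the \emph{homogeneity} of the flow. Suppose for contradiction that $g_t^\tau$ is not accessible. Since $g_t$ is accessible, Proposition~\ref{key-p-8} already tells us that $PCF_{\mathcal C}(\tau)=0$ for every homotopically trivial $su$-cycle of $g_t$, and Proposition~\ref{key-p-9} pins down $PCF_{\mathcal C}(\tau)$ for any $su$-path homotopic to an orbit segment. The key point I would exploit is that in the homogeneous setting, $su$-paths are produced by multiplying on the left by elements of the horospherical subgroups $G^s$ and $G^u$, so that a concatenation $h_n^{\pm}\cdots h_1^{\pm}$ with $h_i^{\pm}\in G^{s/u}$ that lies in (or near) the centralizer direction $\exp(\R X)$ gives exactly the kind of path covered by Proposition~\ref{key-p-9}. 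Because $G$ is semisimple without compact factors and $g_t$ is partially hyperbolic, $G^s$ and $G^u$ together generate $G$ (this is essentially why accessibility holds), and in particular one can find elements of $\exp(\R X)$, arbitrarily small, that are written as words in $G^s\cup G^u$ in many different ways.

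First I would set up, for each pair $(x,g_r x)$ with $g_r x$ in the accessibility class reached from $x$ by a path homotopic to the orbit segment, the identity $PCF_{\mathcal C}(\tau)=\int_0^r \tau(g_t x)\,dt$ from Proposition~\ref{key-p-9}. Then I would use the group structure to obtain \emph{two} different $su$-paths $\mathcal C_1,\mathcal C_2$ from $x$ to the same point $g_r x$, both homotopic to the orbit segment (this requires a small homotopy argument, using that $G/\Gamma$ has the relevant fundamental group and that one can reroute words in $G^s, G^u$ within a contractible neighborhood). Subtracting the two instances of Proposition~\ref{key-p-9} gives $PCF_{\mathcal C_1}(\tau)=PCF_{\mathcal C_2}(\tau)$, i.e.\ $PCF$ of the cycle $\mathcal C_1 \bar{\mathcal C_2}$ vanishes; more usefully, varying $x$ and differentiating (or using density of the $\exp(\R X)$-orbit directions inside the group generated by $G^s,G^u$) should force $\tau$ to be cohomologous to the constant $\tau_0$ along the flow, i.e.\ $\tau = \tau_0 + X\psi$ for a smooth $\psi$. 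But then the time change $g_t^\tau$ is smoothly conjugate to the constant-time reparametrization of $g_t$ (via $h(x)=g^\tau_{\psi(x)/\tau_0}$-type conjugacy, cf.\ the cohomology remark in \S\ref{S-P}), hence conjugate to $g_{t/\tau_0}$ itself, which \emph{is} accessible --- contradiction.

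The main obstacle I expect is the homotopy bookkeeping in the middle step: showing that $su$-paths realizing the same group element $\exp(rX)$ (times a lattice element) can be taken homotopic to the orbit segment, and extracting from the resulting family of $PCF$ identities the strong conclusion that $\tau-\tau_0$ is a coboundary over $g_t$. Concretely, one must argue that the set of $r$ (and basepoints $x$) for which Proposition~\ref{key-p-9} applies is rich enough --- ideally, that the map $\mathcal C \mapsto (\text{endpoint}, \text{homotopy class})$ surjects onto a neighborhood of the orbit direction --- so that the functional $\phi \mapsto PCF_{\mathcal C}(\phi) - \int_0^{r(\mathcal C)}\phi(g_tx)\,dt$ vanishing for $\phi=\tau$ on all such $\mathcal C$ becomes a genuine cohomological constraint. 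This is exactly where ``the homogeneous structure plays an important role'' as advertised: the abundance of horospherical directions and the algebraic description of their products is what makes the family of admissible paths large enough. A secondary, more routine obstacle is checking that the conjugacy produced from $\tau=\tau_0+X\psi$ is smooth and genuinely intertwines the flows, but this is standard (Livšic-type) and follows from the cocycle discussion in \S\ref{S-P}.
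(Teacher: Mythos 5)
Your instinct to lean on Proposition~\ref{key-p-9} and to exploit homogeneity is exactly right, but the concrete route you sketch --- forcing $\tau-\tau_0$ to be a coboundary over the flow and then deducing accessibility via a conjugacy --- is not the route the paper takes, and in the generality of Theorem~\ref{new} it has a genuine gap. What you are describing is essentially the argument of Theorem~\ref{t-5-1}, which needs the extra hypothesis that $\pi_1(M)$ admits no nontrivial homomorphism to $\R$. Without that hypothesis the implication ``$PCF$ vanishes on homotopically trivial cycles $\Rightarrow$ $PCF$ vanishes on all cycles $\Rightarrow$ $\tau-\tau_0$ is a coboundary'' breaks down: all the identities you get from Propositions~\ref{key-p-8} and~\ref{key-p-9} are on homotopically constrained paths, and the leftover obstruction is precisely a homomorphism $\pi_1(M)\to\R$, which can be nontrivial (the paper itself notes $SO(n,2)$ and $Sp(2n,\R)$ as sources of $\mathbb Z$ in $\pi_1(G)$). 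Your appeal to ``varying $x$ and differentiating'' or ``density of $\exp(\R X)$-directions'' does not close this; there is no reason the constraints you produce exhaust all cycles, and you have not supplied the step that upgrades the constraints to a full Liv\v sic-type cohomological conclusion without the $\pi_1$ hypothesis. You also quietly invoke Wilkinson's theorem to pass from $PCF\equiv 0$ to a smooth transfer function, which deserves to be made explicit.

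The paper's proof bypasses the cohomology question entirely with an averaging trick, and this is the sense in which homogeneity is really used. By local accessibility one builds a single $su$-path $\cC$ from $x$ to $g_r x$ with $r>0$, homotopic to the orbit segment; writing $\cC$ as successive left multiplications by elements $h_i\in G^{s/u}$, the same word gives a path $\cC_y$ with the same three properties for \emph{every} basepoint $y$. If $g_t^\tau$ were not accessible, Proposition~\ref{key-p-9} would give
$$PCF_{\cC_y}(\tau)=\int_0^r\tau(g_t y)\,dt \quad\text{for all }y\in G/\Gamma.$$
Integrating in $y$ against Haar measure $m$: the left side vanishes because $m$ is invariant under the left translations by each $h_i$ (so $\int PCF_{x_i,x_{i+1}}(\tau)\,dm=0$ term by term), while the right side equals $r\tau_0>0$ by Fubini. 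That contradiction finishes the proof. So the two approaches differ fundamentally: yours tries to conclude a global cohomological identity (which requires topological input you do not have), whereas the paper only needs a single integrated identity, which fails for free once you average.
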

\begin{proof}
Firstly, notice that $g_t$ is locally accessible (Definition \ref{def-local-A}). Indeed, since $g_t$ is accessible, it implies that the stable bundle and unstable bundle together with their Lie brackets generate the whole tangent bundle. In particular, any two points in a small neighborhood can be connected by an $su$-path. Then it follows easily that $g_t$ is locally accessible. 

Following local accessibility, we have that there is a path $\cC$ w.r.t. $g_t$  satisfying the following properties:
\begin{itemize}
\item[(A)] it is not a cycle, 
\item[(B)] it starts at $x$ and ends at $g_r(x)$ for some $r> 0$,
\item[(C)] it is homotopic to the orbit segment starting from $x$ to $g_r(x)$.
\end{itemize}
Now assume that $\cC=\{x,h_1(x),h_2h_1(x),\cdots, h_nh_{n-1}\cdots h_1 (x)\}$ with $h_i\in G^{s/u}$. In particular, property (B) above means that $h_nh_{n-1}\cdots h_1=g_r$. Therefore, it is easy to see that for any $y$, $\cC_y:=\{y,h_1(y),h_2h_1(y),\cdots, h_nh_{n-1}\cdots h_1 (y)\}$ satisfies the properties (A), (B), (C).

Now assume $g^\tau_t$ is not accessible. Notice that $\cC_y$ satisfies the conditions in Proposition \ref{key-p-9}, for any $y$. Then by Proposition \ref{key-p-9}, it follows that for any $y$, $$PCF_{\mathcal C_y}(\tau)=\int_0^r\tau(g_t y)dt.$$Integrating the above for all $y$, we have \be\label{e-c-1}\int_{G/\Gamma}PCF_{\cC_y}(\tau)dm(y)=\int_{G/\Gamma}\int_0^r\tau(g_ty)dtdm(y).\ee
On one hand, it is easy to see from the definition of PCF and invariance of the measure $m$ under left translations, that \be\label{e-c-2}\int_{G/\Gamma}PCF_{\cC_y}(\tau)dm(y)=0.\ee
On the other hand, by switching the order of the integrals, we have
\be\label{e-c-3}
\int_{G/\Gamma}\int_0^r\tau(g_ty)dtdm(y)=\int_0^r\int_{G/\Gamma}\tau(g_ty)dm(y)dt=r\tau_0>0.\ee
Thus by \eqref{e-c-1}, \eqref{e-c-2} and \eqref{e-c-3}, we reach to a contradiction. Hence $g^\tau_t$ is accessible.
\end{proof}

\subsection{Fundamental group and accessibility}

If the dynamical system does not have homogeneous structure,
we can still have the following result, by using property of fundamental group of the manifold.
\begin{theorem}\label{t-5-1}
Let $N$ be a smooth manifold, and $\phi_t$ be a patially hyperbolic flow on $(N,\mu)$, with $\mu$ a smooth measure. Assume $\phi_t$ is accessible, and there is no nontrivial homomorphism from the fundamental group $\pi_1(N)$ to $\mathbb R$. Let $\tau:N\to\mathbb R_+$ be a smooth function, and denote $\phi_t^\tau$ the time change of $\phi_t$ by $\tau$. Then $\phi_t^\tau$ is accessible.
\end{theorem}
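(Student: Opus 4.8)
\textbf{Proof proposal for Theorem \ref{t-5-1}.}

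The plan is to run the same argument as in the proof of Theorem \ref{new}, but replace the role played by the homogeneous structure (which gave us, for a \emph{fixed} combinatorial $su$-path $\mathcal C$, a whole family $\mathcal C_y$ of translated paths satisfying properties (A), (B), (C) simultaneously for every $y$) by a topological mechanism. The key observation is that in the proof of Theorem \ref{new} the homogeneity was only used to produce, for each $y$, a path from $y$ to $g_{r}(y)$ homotopic to the orbit segment, and to conclude by an averaging identity; the hypothesis on $\pi_1(N)$ will let us bypass the homotopy bookkeeping entirely. Concretely, suppose for contradiction that $\phi_t^\tau$ is not accessible. Then by Proposition \ref{key-p-8} (whose proof uses only accessibility of $\phi_t$, the correspondence $\Phi_x$ of Proposition \ref{key-p-2}, and Proposition \ref{key-p-7}, all of which hold for a general partially hyperbolic flow on a smooth manifold), we get $PCF_{\mathcal C}(\tau)=0$ for every homotopically trivial $su$-cycle $\mathcal C$ of $\phi_t$.

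The next step is to upgrade this to: $PCF_{\mathcal C}(\tau)=0$ for \emph{every} $su$-cycle $\mathcal C$, homotopically trivial or not. This is exactly where the assumption $\mathrm{Hom}(\pi_1(N),\mathbb R)=0$ enters. First I would check that $\mathcal C\mapsto PCF_{\mathcal C}(\tau)$ is a well-defined additive function on the group of $su$-cycles based at a fixed point $x$ (additivity under concatenation is immediate from the definition of $PCF$ as a sum of the one-leg integrals, and the formula $PCF(\mathcal C)=\beta^{s/u}$-type expressions shows it depends only on the endpoints within each leg). Since it vanishes on homotopically trivial cycles, it descends to a homomorphism from a quotient of $\pi_1(N)$ — more precisely, from the image of the based $su$-cycle group in $\pi_1(N,x)$, which by accessibility and local accessibility is \emph{all} of $\pi_1(N,x)$ — into $(\mathbb R,+)$. (Here one uses that $su$-paths are genuine paths in $N$, so an $su$-cycle has a well-defined homotopy class, and that by accessibility every loop is homotopic to an $su$-cycle, e.g.\ by covering it with small balls and replacing each small arc by a short $su$-path using local accessibility.) By hypothesis this homomorphism is zero, so $PCF_{\mathcal C}(\tau)=0$ for all $su$-cycles.

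Finally I would derive the contradiction exactly as in Theorem \ref{new}. Pick, using local accessibility, a single $su$-path $\mathcal C$ that is not a cycle, runs from some $x$ to $\phi_r(x)$ with $r>0$, and is homotopic to the orbit segment from $x$ to $\phi_r(x)$. Proposition \ref{key-p-9} (again general, needing only the non-accessibility of $\phi_t^\tau$ and Proposition \ref{key-p-7}) gives $PCF_{\mathcal C}(\tau)=\int_0^r\tau(\phi_t x)\,dt>0$. But $\int_0^r\tau(\phi_tx)\,dt>0$ since $\tau>0$, whereas I claim $PCF_{\mathcal C}(\tau)=0$: append to $\mathcal C$ the backward orbit segment from $\phi_r(x)$ to $x$; the result is an $su$-cycle only if that orbit segment is itself an $su$-path, which in general it is not, so this last reduction needs a little more care. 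The clean way is instead to note that $\mathcal C$ concatenated with the \emph{time-changed} orbit arc closes up, and to use the cycle-case remark after Proposition \ref{key-p-2}: comparing $\Phi_x(\mathcal C)$ with what the vanishing of all periodic cycle functionals forces, one concludes $\int_0^r\tau(\phi_tx)\,dt=0$, the contradiction.

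\textbf{Main obstacle.} The genuinely delicate point is the middle step: showing that every element of $\pi_1(N,x)$ is represented by an $su$-cycle, and that $PCF(\cdot)(\tau)$ is genuinely a well-defined group homomorphism on $\pi_1(N,x)$ (i.e.\ homotopic $su$-cycles have equal $PCF$). The first half follows from accessibility plus local accessibility (Definition \ref{def-local-A}) by a standard covering-and-refinement argument, but the second half — homotopy invariance of $PCF(\mathcal C)(\tau)$ — must be reduced to the vanishing on homotopically trivial cycles via additivity, which is why Proposition \ref{key-p-8} is the real engine and the $\pi_1$ hypothesis is exactly what converts its conclusion into the global statement needed to mimic Theorem \ref{new}.
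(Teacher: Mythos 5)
Your first two-thirds match the paper's strategy closely: reduce by Proposition \ref{key-p-8} to the vanishing of $PCF(\tau)$ on homotopically trivial $su$-cycles, then use the hypothesis $\mathrm{Hom}(\pi_1(N),\mathbb R)=0$ (plus the fact that, by accessibility and local accessibility, every loop is homotopic to an $su$-cycle, so the $PCF$-homomorphism is defined on all of $\pi_1(N,x)$) to upgrade this to vanishing on \emph{all} $su$-cycles. This is exactly how the paper sets up the group $H(x)$ and shows it must be $\{0\}$.

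The final step is where you and the paper part ways, and it is where your argument has a genuine gap. Once $PCF(\tau)$ vanishes on all $su$-cycles, you try to reach a contradiction by applying Proposition \ref{key-p-9} to a single $su$-path $\mathcal C$ from $x$ to $\phi_r(x)$ homotopic to the orbit segment and arguing that $PCF_{\mathcal C}(\tau)$ should be zero. But $\mathcal C$ is not a cycle, and nothing you have proved controls $PCF$ on non-cycles; closing it up with an orbit arc (whether for $\phi_t$ or the time-changed flow) does not produce an $su$-cycle, so the ``cycle-case remark after Proposition \ref{key-p-2}'' cannot be invoked, and your suggested ``clean way'' does not go through. The averaging trick of Theorem \ref{new} is also unavailable here precisely because there is no homogeneous structure with which to translate $\mathcal C$. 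What the paper actually does at this point is appeal to Wilkinson's cohomological equation theorem \cite{W}: vanishing of $PCF(\tau)$ on all $su$-cycles (for the accessible, center bunched flow $\phi_t$) forces $\tau-\int\tau\,d\mu$ to be a \emph{smooth} coboundary over the flow. This yields a smooth conjugacy between $\phi_t^\tau$ and a constant rescaling of $\phi_t$, which manifestly preserves accessibility, giving the contradiction. You need this (or some other mechanism converting ``$PCF\equiv 0$ on cycles'' into a dynamical statement about the time change) to finish; the direct comparison with Proposition \ref{key-p-9} as you wrote it does not close the argument.
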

\begin{proof}
We prove it by contradiction. Assume that $\phi_t^\tau$ is not accessible.

For any point $x\in N$, let $\mathcal Cyc(x)$ be the set of all $su$-paths w.r.t. $\phi_t$ starting and ending at $x$. Let$$H(x):=\{u\in\mathbb R:  \exists\,\mathcal C\in \mathcal Cyc(x),\;s.t.\text{ $\Phi_x(\mathcal C)$ connects $x$ to }\phi^\tau_u(x)\}.$$ It is clear that $H(x)$ is a subgroup of $\mathbb R$.

Let $\mathcal C\in \mathcal Cyc(x)$ and assume $\mathcal C$ is homotopy trivial, then by Definition \ref{key-d-4}, $\Phi_x(\mathcal C)$ is an $su$ path w.r.t. $\phi_t^\tau$ and it connects $x$ with $\phi_a(x)$ for some $a\in\mathbb R$. Now by Proposition \ref{key-p-8}, $a=0$. Since by Proposition \ref{key-p-2}, the value $a$ is equal to the periodic cycle functional evaluated on the cycle $\mathcal C$, therefore the periodic cycle functional vanishes over all homotopy trivial cycles. 

With this observation, it follows that there is a homomorphism from the fundamental group $\pi_1(N)$ to $H(x)\subset \mathbb R$. By the assumption on $\pi_1(N)$, $H(x)$ must be trivial. This means that the periodic cycle functional vanishes over all cycles. Hence by \cite{W}, $\tau-\int\tau$ is a (smooth) coboundary with respect to the vector field generating $\phi_t$. This in turn gives that $\phi_t^\tau$ is smoothly conjugate to $\phi_t$, up to a constant time change. By this conjugacy, $\phi_t^\tau$ must be accessible since $\phi_t$ is. This is a contradiction to the assumption we start with.
\end{proof}

We can apply Theorem \ref{t-5-1} to certain homogeneous flows.
We make the following assumptions on the homogeneous space: 
\begin{itemize}
\item[(i)] $G$ is a connected semisimple Lie group without compact factors, 
\item[(ii)] The real rank of $G$ is greater than one,
\item[(iii)] The fundamental group of $G$ is finite cyclic,
\item[(iv)] $\Gamma<G$ is an irreducible cocompact lattice.
\end{itemize}

Under these assumptions, $G$ does not have any factor from the following list
\begin{itemize}
\item $SO(n,2)$ $(n\ge 1)$, $Sp(2n,\mathbb R)$ $(n\ge 2)$ (The fundamental group contains $\mathbb Z$.)
\end{itemize}

Let $M=G/\Gamma$. Given an element $X\in$ Lie$(G)$, and assume it induces a partially hyperbolic flow $g_t$. It follows from \cite{ps} that $g_t$ is accessible if and only if $Ad(\exp(X))$ restricted to each factor of $G$ has at least one eigenvalue of modulus different from one. Let $\tau:M\to \mathbb R_+$ be a smooth function, and denote $g_t^\tau$ the time change of $g_t$ by $\tau$. Our main result in this section is the following.

\begin{theorem}
$g_t^\tau$ is accessible.
\end{theorem}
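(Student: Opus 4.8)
The plan is to derive the statement from Theorem \ref{t-5-1}, applied with $N = M = G/\Gamma$, flow $\phi_t = g_t$, and smooth measure $\mu = m$ the Haar measure. We are in the situation where $g_t$ is partially hyperbolic and accessible — by the criterion recalled above this amounts to $\mathrm{Ad}(\exp X)$ having, on each simple factor of $G$, an eigenvalue off the unit circle — so the only remaining hypothesis of Theorem \ref{t-5-1} to be checked is that there is no nontrivial homomorphism $\pi_1(M)\to\mathbb{R}$. Equivalently, I need $H^1(M;\mathbb{R}) = 0$, i.e. that the abelianization of $\pi_1(M)$ is finite.

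The first step is to identify $\pi_1(M)$. Let $p\colon\widetilde G\to G$ be the universal cover; by assumption (iii) its kernel $\pi_1(G)$ is a finite cyclic central subgroup of $\widetilde G$. Put $\widetilde\Gamma := p^{-1}(\Gamma)$, a cocompact lattice in the simply connected group $\widetilde G$. The right-translation action of $\widetilde\Gamma$ on $\widetilde G$ is free and properly discontinuous with quotient $\widetilde G/\widetilde\Gamma = G/\Gamma = M$, and since $\pi_1(\widetilde G)=1$ this exhibits $\widetilde G\to M$ as the universal covering; hence $\pi_1(M)\cong\widetilde\Gamma$. Moreover $\widetilde\Gamma$ is a central extension
\[
1\longrightarrow \pi_1(G)\longrightarrow \widetilde\Gamma\longrightarrow \Gamma\longrightarrow 1
\]
with finite kernel, so any homomorphism $\widetilde\Gamma\to\mathbb{R}$ annihilates the torsion subgroup $\pi_1(G)$ and descends to $\Gamma$. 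Thus $\mathrm{Hom}(\pi_1(M),\mathbb{R})\cong\mathrm{Hom}(\Gamma,\mathbb{R})$, and it suffices to prove that the abelianization $\Gamma^{\mathrm{ab}}$ is finite.

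For this I would invoke Margulis' normal subgroup theorem: by assumptions (i), (ii), (iv) — reducing to the adjoint group $G/Z(G)$ and the image of $\Gamma$ if the centre is infinite, which is harmless here since the central kernel is abelian — every normal subgroup of $\Gamma$ is either finite or of finite index. Applying this to $[\Gamma,\Gamma]$: were it finite, $\Gamma$ would be an extension of an abelian group by a finite group, hence amenable, contradicting that a lattice in the nonamenable group $G$ is nonamenable. Therefore $[\Gamma,\Gamma]$ has finite index, $\Gamma^{\mathrm{ab}}$ is finite, $\mathrm{Hom}(\Gamma,\mathbb{R})=0$, and so $\mathrm{Hom}(\pi_1(M),\mathbb{R})=0$; Theorem \ref{t-5-1} then gives that $g_t^\tau$ is accessible. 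The genuinely substantive ingredient is the appeal to Margulis' theorem — that is, checking that the standing hypotheses (i)--(iv) really place us within its scope, and in particular dealing cleanly with a possibly infinite centre of $G$; once that is granted, the finiteness of $\Gamma^{\mathrm{ab}}$, and with it the theorem, is immediate, the remaining steps (the covering-space computation and the descent through the central extension) being routine.
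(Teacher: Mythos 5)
Your proposal is correct and follows essentially the same route as the paper: reduce via Theorem \ref{t-5-1} to showing $\mathrm{Hom}(\pi_1(G/\Gamma),\R)=0$, identify $\pi_1(G/\Gamma)$ as a central extension of $\Gamma$ by the finite group $\pi_1(G)$ so that any homomorphism to $\R$ descends to $\Gamma$, and then invoke Margulis' normal subgroup theorem to get $\mathrm{Hom}(\Gamma,\R)=0$. The paper simply cites Margulis at that last step, whereas you spell out the deduction (finite-or-finite-index applied to $[\Gamma,\Gamma]$, amenability ruling out the finite case) and flag the finite-center hypothesis for NST; these are welcome amplifications, not a different argument.
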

\begin{proof}
This follows from Theorem \ref{t-5-1}.  It suffices to check the condition on the fundamental group of $G/\Gamma$. By assumption (iii), $\pi_1(G/\Gamma)$ is a finite extension of $\Gamma$. But now by assumption (ii) and Margulis normal subgroup theorem \cite{ma-1},\cite[Theorem 4', Introduction]{ma-2}, there is no nontrivial homomorphism from $\Gamma$ to $\mathbb R$. Hence there is no nontrivial homomorphism from $\pi_1(G/\Gamma)$ to $\mathbb R$.
\end{proof}

\section{Stable ergodicity}

\label{S-C}

In our setting, it is known that stable accessibility implies stable ergodicity (by Theorem \ref{bw}). So we just need to show
\begin{theorem}
Under the assumptions of Theorem \ref{main}, the time change $g_t^\tau$ is stably accessible.
\end{theorem}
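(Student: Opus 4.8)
The plan is to upgrade the argument behind Theorem~\ref{main-a} to a robust one, showing that the accessibility classes of the time change are open in a way that persists under $C^1$ perturbations; this is the \emph{center engulfing} property promised in the introduction, and it is the mechanism by which stable accessibility is typically established (cf.\ \cite{ps}). Fix $t_0\neq 0$ and write $F=g^\tau_{t_0}$. By Theorem~\ref{m-c-b}, $F$ is partially hyperbolic and center bunched, and both are $C^1$-open conditions, so there is a $C^1$-neighborhood $\mathcal U$ of $F$ in which every $g$ is again partially hyperbolic and center bunched, with stable and unstable plaques depending continuously on $g$ in the $C^1$ topology. By Theorem~\ref{bw} it then suffices to show, after shrinking $\mathcal U$ if necessary, that every $g\in\mathcal U$ is accessible. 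I will deduce this from the following robust mechanism: for every $g\in\mathcal U$ and every $x$ there is a smooth finite-dimensional family of $su$-paths for $g$ issuing from $x$ whose endpoint map is a submersion at some parameter value. Such a submersion forces $AC_g(x)$ to contain an open set, hence to be open (the $su$-saturate of an open set is open); and since the accessibility classes partition the connected manifold $G/\Gamma$ into open sets, there is only one, so $g$ is accessible.

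The parameter family is built from the homogeneous structure together with the correspondence $\Phi_x$ of Definition~\ref{key-d-4}. Accessibility of $g_t$ is equivalent to $\mathfrak g^{s}+\mathfrak g^{u}$ generating $\mathrm{Lie}(G)$ as a Lie algebra (cf.\ \S\ref{S-A} and \cite{ps}); hence, for a sufficiently long pattern $(\epsilon_1,\dots,\epsilon_n)\in\{s,u\}^n$, the word map $W:G^{\epsilon_1}\times\cdots\times G^{\epsilon_n}\to G$, $W(h_1,\dots,h_n)=h_n\cdots h_1$, is a submersion at a dense set of configurations, and in particular at some $h^0=(h_1^0,\dots,h_n^0)$ arbitrarily close to the identity (the standard consequence of the bracket-generating condition via the orbit theorem). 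For each $x\in G/\Gamma$ this gives an $su$-path $\cC_{x,h}=\{x,h_1x,\dots,W(h)x\}$ for $g_t$, smooth in $h$; applying $\Phi_x$ produces, by Proposition~\ref{key-p-2}, an $su$-path for $g^\tau_t$ from $x$ to $\Psi_x(h):=g^\tau_{-PCF_{\cC_{x,h}}(\tau)}(W(h)x)$, a smooth function of $h$. Its differential at $h^0$ is onto $T_{\Psi_x(h^0)}(G/\Gamma)$: the orbit map $g\mapsto g\cdot x$ of $G$ on $G/\Gamma$ is a local diffeomorphism, so submersivity of $W$ at $h^0$ already makes the $W(h)x$-part of $\Psi_x$ submersive there, and the reparametrization factor $g^\tau_{-PCF(\cdot)}$ can only enlarge the image. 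Thus $\Psi_x$ is a submersion at $h^0$ for \emph{every} $x$, and $AC_{g^\tau_{t_0}}(x)$ is open for every $x$; this re-derives accessibility of $g^\tau_{t_0}$ in a form that will survive perturbation.

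For the robustness, fix $g\in\mathcal U$ and $x$, and form the $su$-path for $g$ with the same pattern $(\epsilon_1,\dots,\epsilon_n)$ and the same small leg-parameters as $\Phi_x(\cC_{x,h})$, now following the leaves of $\mathcal F^s_g$ and $\mathcal F^u_g$. The resulting endpoint map $\Psi^g_x(h)$ is $C^1$ in $h$, coincides with $\Psi_x(h)$ when $g=F$, and depends continuously on $g$ together with its $h$-derivative, because the plaques of $\mathcal F^s_g$ and $\mathcal F^u_g$, and their dependence on the basepoint, vary continuously with $g$ in the $C^1$ topology. Surjectivity of $D_h\Psi^g_x|_{h^0}$ is an open condition, so after shrinking $\mathcal U$ it holds for all $g\in\mathcal U$ and all $x$; hence every $AC_g(x)$ is open and $g$ is accessible. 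Combined with Theorem~\ref{m-c-b} and Theorem~\ref{bw}, this shows that every $g$ in a $C^2$-neighborhood of $F$ is ergodic and K, so $g^\tau_{t_0}$ is stably accessible --- indeed stably ergodic and stably K --- for every $t_0\neq0$, which also yields Theorem~\ref{main-stable}.

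I expect the main obstacle to be the robustness bookkeeping: making precise which data of a (possibly long) $su$-path transfers to a perturbation $g$, and checking that $\Psi^g_x$ and its $h$-derivative depend continuously on $g$, uniformly in $x$, from the $C^1$-continuous dependence of the stable and unstable plaques and their basepoint dependence. One should also verify carefully that $W$ is genuinely submersive at configurations close to the identity --- so that $\cC_{x,h^0}$ lies in a small ball, and in particular is homotopic to a short orbit segment should one prefer the homotopy picture of \S\ref{S-PA} --- which is exactly where accessibility of $g_t$, equivalently the bracket-generating hypothesis, is used. The integration identity $\int_{G/\Gamma}PCF_{\cC_{x,h}}(\tau)\,dm(x)=0$ from the proof of Theorem~\ref{new} is not needed along this route, although it offers another way of seeing that the endpoint moves.
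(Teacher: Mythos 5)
Your overall strategy---trading the paper's degree-theoretic \emph{centre engulfing} argument for an infinitesimal submersion argument---is a legitimate alternative route to stable accessibility, but as written it has a genuine gap at its central step.

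The gap is the claim that the endpoint map
$\Psi_x(h) = g^\tau_{-PCF_{\cC_{x,h}}(\tau)}(W(h)x)$
is a submersion at $h^0$ merely because $h\mapsto W(h)x$ is. You assert ``the reparametrization factor $g^\tau_{-PCF(\cdot)}$ can only enlarge the image,'' but that is not how a composition with an $h$-dependent flow time behaves. Writing $\Psi_x(h)=g^\tau_{T(h)}(Y(h))$ with $Y(h)=W(h)x$ and $T(h)=-PCF_{\cC_{x,h}}(\tau)$, the differential is
$D_h\Psi_x \cdot v = Dg^\tau_{T(h^0)}\bigl(D_hY\cdot v\bigr) + \bigl(D_hT\cdot v\bigr)\,\tau^{-1}X$,
a surjective map $A=Dg^\tau_{T(h^0)}\circ D_hY$ corrected by a rank-one term $\phi(v)\,w_0$ with $\phi=D_hT$ and $w_0=\tau^{-1}X$. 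Such a correction \emph{can} drop rank: if $\ker A\subset\ker\phi$ and $Av_1+w_0\in A(\ker\phi)$ for $\phi(v_1)=1$, the image is a proper subspace. Nothing in the bracket-generating hypothesis prevents this from happening at the particular $h^0$ you chose, and the very phenomenon you would need to rule out is exactly the one the paper has to confront explicitly: after applying $\Phi_x$, the endpoints of the transported family land on an orbit segment $\mathcal O_1=\{g^\tau_t(x):t\in[L_x,R_x]\}$ that may or may not contain $x$. The paper resolves this by establishing $\mathcal O_1$ is a \emph{nontrivial} segment (via the same Haar-integration computation as in Theorem~\ref{new}), treating separately the cases $x\in\mathcal O_2$ and $x\notin\mathcal O_2$, and in the latter case concatenating an additional $su$-path to relocate the base point; the degree of the induced map on $H_d(Z,\partial Z)$ is then preserved because it is a topological invariant of the family, not an infinitesimal rank condition. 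A degree $\neq 0$ is robust under $C^1$-perturbation without any transversality assumption; a submersion is not unless you prove it holds, and at present you do not. To salvage your route you would either have to show that for a \emph{suitable} choice of $h^0$ the rank does not drop (and make this choice uniform in $x$), or else replace transversality with a degree argument---at which point you have essentially reproduced the centre-engulfing proof. The correct reference for what you are after is the engulfing criterion of \cite{BPW,BW-1}, which the paper cites and which you implicitly reinvent in a more fragile form.

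A secondary (but fixable) issue: even assuming the submersion, your robustness paragraph needs a compactness/uniformity step to pass from ``for each $x$, submersivity at $h^0$ persists for $g$ $C^1$-near $F$'' to ``there is a single neighborhood $\mathcal U$ of $F$ working for all $x$ simultaneously,'' and the parametrization of $su$-legs of a perturbed $g$ by ``the same small leg-parameters'' needs to be made precise (e.g.\ via exponential charts in $E^{s/u}_g$ whose $C^1$ dependence on $g$ is provided by HPS theory). Neither is hard, but neither is free.
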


In order to prove stable accessibility, it is convenient to use the notion of centre engulfing, which was introduced in \cite{BPW,BW-1}.

Given a partially hyperbolic diffeomorphism $f: N \to N$, let $d$ be the
dimension of the centre bundle $E^c$. The diffeomorphism $f : N \to N$ is {\it centre engulfing} at a point $x \in N$ if:
\begin{itemize}
\item[({\bf a1})] there is a continuous family of $su$-paths $\Psi_z : [0,1] \to N$ indexed by points $z \in Z$, where $Z$ is a compact $d$-dimensional manifold with boundary;
\item[({\bf a2})] the number of legs in $\Psi_z$ is uniformly bounded;
\item[({\bf a3})] each path $\Psi_z$ begins at $x$ and ends in the centre manifold $W^c(x)$ through $x$ (i.e. $\Psi_z(0) = x$ and $\Psi_z(1) \in W^c(x)$ for all $z \in Z$);
\item[({\bf a4})] the paths $\Psi_z$ corresponding to $z \in \partial Z$ do not end at $x$; and finally
\item[({\bf a5})] the map $ (Z,\partial Z) \to (W^c(x),W^c(x) -\{x\})$ defined by $z\mapsto \Psi_z(1)$ wraps around $x$ non-trivially (i.e. the map has non-zero degree in the sense that the induced map on homology $H_d(Z, \partial Z, \mathbb Z) = \mathbb Z$ to $H_d(W^c(x), W^c(x) -\{x\}, \mathbb Z) = \mathbb Z $ is nonzero).
\end{itemize}
Observe that these five conditions are open under $C^1$-small perturbations of $f$. Moreover, centre engulfing with accessibility implies stable accessibility as stated below.

\begin{lemma}[\cite{BW-1}, Corollary 5.3]
If $f : N \to N$ is centrally engulfing at one point and it is possible to join any point of $N$ to $x$ by an $su$-path, then $f$ is stably accessible.
\end{lemma}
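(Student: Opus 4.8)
The goal is to produce a $C^1$-neighborhood $\mathcal U$ of $f$ all of whose elements are accessible. Two ingredients go into this: first, the five conditions (\textbf{a1})--(\textbf{a5}) defining centre engulfing are $C^1$-open, so for $g$ close to $f$ they persist at a continuation point $x_g$ (with $x_f=x$), and this forces the accessibility class $AC_g(x_g)$ to be \emph{open}; second, the accessibility of $f$ lets us steer every point of $N$ into that open set by a $g$-$su$-path, whence $AC_g(x_g)=N$.

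First I would fix a $C^1$-neighborhood $\mathcal U$ of $f$ on which the stable and unstable foliations $\mathcal F^s_g,\mathcal F^u_g$ and the centre leaf $W^c_g(x_g)$ vary continuously, and on which the family $\Psi^g_z$ of $su$-paths from (\textbf{a1}) survives: (\textbf{a1})--(\textbf{a2}) are manifestly open, (\textbf{a3})--(\textbf{a4}) follow from continuity of $W^c_g$ and of the endpoints, and (\textbf{a5}) because the degree of a map is invariant under $C^1$-small (in particular $C^0$-small) perturbation. Thus the map $\theta_g:(Z,\partial Z)\to(W^c_g(x_g),W^c_g(x_g)\setminus\{x_g\})$, $z\mapsto\Psi^g_z(1)$, still has nonzero degree. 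By degree theory its image contains a relatively open neighborhood $V_g$ of $x_g$ inside $W^c_g(x_g)$; since $\theta_g$ is $C^0$-close to $\theta_f$, one may shrink $\mathcal U$ so that the size of $V_g$ is bounded below uniformly in $g\in\mathcal U$. Every point of $V_g$ is the endpoint of an $su$-path issued from $x_g$, so $V_g\subset AC_g(x_g)$. Since $AC_g(x_g)$ is saturated by $\mathcal F^s_g$ and $\mathcal F^u_g$, and by the local product structure of partially hyperbolic systems the $su$-saturation of a relatively open subset of a centre leaf is open in $N$, we conclude that $AC_g(x_g)$ is open, and in fact contains a ball $B(x_g,\epsilon_0)$ with $\epsilon_0>0$ uniform over $\mathcal U$.

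Next I would bring in the accessibility of $f$ through a compactness argument. Accessibility of $f$ on the compact manifold $N$ implies \emph{uniform} accessibility: there are $M\in\mathbb N$ and $L>0$ such that any two points of $N$ are joined by an $f$-$su$-path with at most $M$ legs, each of intrinsic $\mathcal F^{s/u}_f$-length at most $L$. (This follows from local accessibility, Definition \ref{def-local-A}, which is itself a consequence of accessibility, together with the connectedness and compactness of $N$; making this precise is where the routine-but-nontrivial bookkeeping lies.) Shrinking $\mathcal U$ once more, given $g\in\mathcal U$ and an arbitrary $y\in N$, take an $f$-$su$-path $y=q_0,q_1,\dots,q_k=x$ with $k\le M$ and leg-lengths $\le L$, and build a $g$-$su$-path $y=q_0',q_1',\dots,q_k'$ inductively by replacing each leg by the ``parallel'' leg along the corresponding leaf of $\mathcal F^{s/u}_g$ through $q_i'$; at the bounded scale $L$ the per-step displacement is $O(\|f-g\|_{C^1}+d(q_i,q_i'))$, so after at most $M$ steps $d(q_k',x)\to 0$ as $g\to f$, uniformly in $y$. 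Choosing $\mathcal U$ small enough that $d(q_k',x)<\epsilon_0/2$ and $d(x_g,x)<\epsilon_0/2$, we get $q_k'\in B(x_g,\epsilon_0)\subset AC_g(x_g)$, hence $y\in AC_g(x_g)$. As $y$ was arbitrary, $AC_g(x_g)=N$, i.e.\ $g$ is accessible; since $\mathcal U$ is a fixed neighborhood, $f$ is stably accessible.

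The main obstacle is twofold. The heart of the matter is the persistence of condition (\textbf{a5}) and the resulting quantitative statement that the centre-neighborhood $V_g$ has size bounded below as $g\to f$: this is exactly why ``centre engulfing'' is the right open, non-degenerate strengthening of bare local accessibility (which by itself is not a $C^1$-stable property). The second, more technical, difficulty is the uniform-complexity statement for the accessible map $f$ and the robustness of the parallel-$su$-path construction: the naive worry is that the number of legs $k=k(y)$ could be unbounded in $y$, so that no single neighborhood $\mathcal U$ would work simultaneously for all $y$; ruling this out is precisely the content of the compactness argument in the previous paragraph.
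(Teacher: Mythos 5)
This lemma is quoted in the paper from \cite{BW-1} (Corollary 5.3 there) and is not proved in the paper, so there is no in-paper argument to compare your reconstruction against; what follows is an assessment on its own merits.

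Your overall strategy is the standard and, as far as I can tell, the correct one: the five conditions ({\bf a1})--({\bf a5}) are $C^1$-open (the paper itself observes this right after listing them), the nonzero degree in ({\bf a5}) forces, via local product structure, the accessibility class $AC_g(x_g)$ of the continuation point to contain a ball $B(x_g,\epsilon_0)$ of uniform size, and accessibility of $f$ together with $C^1$-robustness of $su$-paths steers every point of $N$ into that ball. Your diagnosis that centre engulfing is precisely the open, quantitative strengthening of ``$AC(x)$ has interior'' is exactly the point of the definition.

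The one step that is overstated is the parenthetical claim that accessibility of $f$ implies local accessibility in the sense of Definition~\ref{def-local-A}, and thence uniform accessibility with global bounds $M$, $L$. For \emph{homogeneous} flows the paper derives local accessibility from the bracket-generating property of $E^s\oplus E^u$; for a general partially hyperbolic $f$ this implication is not obvious and, in the strong form of Definition~\ref{def-local-A}, not something I would take for granted. Fortunately the proof does not need it. Replace ``uniform accessibility'' by a direct compactness argument: for each $y\in N$ fix a single $f$-$su$-path $P_y$ from $y$ to $x$; continuity of $\mathcal F^{s}_g$ and $\mathcal F^{u}_g$ in both the base point and the map $g$ (at the bounded scale of $P_y$) gives neighborhoods $V_y\ni y$ and $\mathcal U_y\ni f$ so that for $y'\in V_y$ and $g\in\mathcal U_y$ the ``parallel'' $g$-$su$-path starting at $y'$ ends within $\epsilon_0/2$ of $x$; finitely many $V_{y_1},\dots,V_{y_k}$ cover $N$ by compactness, and $\mathcal U=\bigcap_i\mathcal U_{y_i}$ intersected with the centre-engulfing-persistence neighborhood does the job. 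This yields the uniform bound on legs and lengths as a by-product rather than as an input. With that substitution the argument closes cleanly.
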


\subsection{Centre engulfing}

This subsection is devoted to the proof of centre engulfing for smooth time changes. To start with, we first notice that
the stable accessibility (and centre engulfing) for the algebraic flow $g_t$ is proven in \cite{BP,ps1,ps}. We make the statement in a precise way as follows.

\begin{proposition}[\cite{BP, ps1,ps}]\label{pp-s}
If $g_t$ is accessible, then it is centre engulfing, and hence stably accessible.
\end{proposition}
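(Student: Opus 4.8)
The plan is to reduce the statement to the Lie-theoretic description of the horospherical and centre subgroups, to run the local centre-engulfing construction of \cite{BP,ps1,ps} in that description, and then to invoke Corollary~5.3 of \cite{BW-1} (the lemma quoted just above), which upgrades accessibility together with centre engulfing to stable accessibility. So first I would set up the algebra. Write $\g=\mathrm{Lie}(G)$ and decompose $\g=\g^s\oplus\g^c\oplus\g^u$ into the generalized eigenspaces of $\mathrm{ad}\,X$ with eigenvalues of negative, zero and positive real part (equivalently of $Ad(\exp X)$ of modulus $<1$, $=1$, $>1$). Then $\g^s,\g^u$ are the nilpotent Lie algebras of $G^s,G^u$, while $\g^c$ is a subalgebra containing $X$ whose connected subgroup $G^c$ satisfies $W^{s/u}(x)=G^{s/u}x$ and $W^c_{\mathrm{loc}}(x)=G^c x$ locally (recall $g_t$ is dynamically coherent); set $d=\dim\g^c$, the dimension of the centre bundle. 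Since $g_t$ is accessible, $G$ has no compact factors, and by \cite{ps} accessibility is equivalent to $G=\langle G^s,G^u\rangle$. Hence the product maps
\[
P_k:(G^s\times G^u)^k\to G,\qquad (a_1,b_1,\dots,a_k,b_k)\mapsto b_ka_k\cdots b_1a_1,
\]
are surjective onto a neighbourhood of $e$ for $k$ large enough; concatenating two such words as in \cite{BP} then yields, for some $k$, a point $p$ with $P_k(p)=e$ at which $P_k$ is a submersion onto $\g$.

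Next I would build the engulfing family at a point $x$ out of $P_k$. Using submersivity at $p$, the implicit function theorem (restricting to the preimage of $G^c$ and taking a transversal to the fibres) gives a smooth closed $d$-disk $Z\ni p$ inside $(G^s\times G^u)^k$ on which $P_k$ restricts to a diffeomorphism onto a neighbourhood $D$ of $e$ in $G^c$. For $z=(a_1(z),b_1(z),\dots,a_k(z),b_k(z))\in Z$, let $\Psi_z\colon[0,1]\to M$ be the $su$-path that applies the legs $a_1(z)\in G^s$, then $b_1(z)\in G^u$, and so on, successively to $x$; it has $2k$ legs (so the number of legs is uniformly bounded), depends continuously on $z$, satisfies $\Psi_z(0)=x$, and ends at $\Psi_z(1)=P_k(z)\,x\in W^c(x)$. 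Shrinking $Z$ so that $P_k(\partial Z)$ misses $e$ ensures the boundary paths do not end at $x$, giving (a1)--(a4). Finally, since $z\mapsto P_k(z)$ is a diffeomorphism of $(Z,\partial Z)$ onto $(D,D\setminus\{e\})$ and the local orbit map $G^c\to W^c_{\mathrm{loc}}(x)$ is an embedding near $e$, the endpoint map $z\mapsto\Psi_z(1)$ is a homeomorphism onto a neighbourhood of $x$ in $W^c(x)$, hence has degree $\pm1$ as a map $(Z,\partial Z)\to(W^c(x),W^c(x)\setminus\{x\})$; this is (a5). Thus $g_t$ is centre engulfing at $x$, and together with accessibility, \cite{BW-1} (Corollary~5.3) yields that $g_t$ is stably accessible.

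I expect the only genuine difficulty to be the last clause of the first paragraph: exhibiting a horospherical word $P_k$ and a point $p\in P_k^{-1}(e)$ at which $dP_k$ already surjects onto $\g$. This is exactly the local analysis of \cite{BP,ps1}: one uses that $G=\langle G^s,G^u\rangle$ with $G$ connected forces $\g$ to be generated, as a Lie algebra, by $\g^s$ and $\g^u$, so iterated brackets of left-invariant vector fields coming from $\g^s$ and $\g^u$ span $\g$; translating this into the derivatives of a sufficiently long alternating word, and using the concatenation trick to land exactly at $e$ while keeping the differential surjective, produces the required submersion. The remaining points --- continuity of $z\mapsto\Psi_z$, the uniform bound on the number of legs, and the behaviour on $\partial Z$ --- are then routine, and the passage from centre engulfing to stable accessibility is the black box provided by \cite{BW-1}.
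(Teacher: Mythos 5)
The paper does not prove this proposition: it simply cites \cite{BP,ps1,ps} for it. Your argument correctly reconstructs the standard Lie-theoretic proof found in those references --- the decomposition $\g=\g^s\oplus\g^c\oplus\g^u$, the surjectivity and eventual submersivity of the horospherical word map $P_k$ (via Lie-bracket generation of $\g$ by $\g^s,\g^u$ together with the Brin--Pesin concatenation trick to land at $e$), the implicit-function-theorem extraction of a $d$-disk $Z$ on which $P_k$ is a diffeomorphism onto a neighbourhood of $e$ in $G^c$, verification of (a1)--(a5), and finally Corollary~5.3 of \cite{BW-1}. This is essentially the same route as the cited sources, and your construction in fact yields the stronger properties singled out later in Remark~\ref{re-s} (the endpoint map is a smooth embedding), so no gap.
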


\begin{remark}\label{re-s}
In fact, the arguments in \cite{BP, ps1,ps} (e.g. \cite[Theorem 3.2]{ps1}) yield to a much stronger result than centre engulfing. Namely, in addition to the properties of centre engulfing, the map $\Psi_{(\cdot)}(1):Z\to W^c(x)$ can be chosen to be one to one, and smooth, and the path $\Psi_z(\cdot):[0,1]\to N$ is homotopic to the segment (inside $W^c(x)$) connects $x$ to $\Psi_z(1)$ for any $z\in Z$. Moreover, the image of $Z$ can be restricted in a sufficiently small ball around $x$. 
\end{remark}

For any two $su$ paths $\mathcal C_1=\{x,\cdots,y\}$ and $\mathcal C_2=\{y,\cdots,z\}$, define $\star$ as follows: $$\mathcal C_1\star\mathcal C_2=\{x,\cdots, y,\cdots, z\}.$$If $\mathcal C=\{x_0,x_1,\cdots,x_n\}$, then denote $[\mathcal C]^{-1}:=\{x_n,x_{n-1},\cdots,x_0\}$.

We are now ready to prove

\begin{proposition}
The time change $g_t^\tau$ is centre engulfing.
\end{proposition}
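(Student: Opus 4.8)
The plan is to transfer the centre engulfing structure of $g_t$ (Proposition \ref{pp-s} and the stronger Remark \ref{re-s}) to $g_t^\tau$ via the correspondence $\Phi_x$ of Definition \ref{key-d-4}, using the accessibility of $g_t^\tau$ established in Theorem \ref{new} to ``correct'' the endpoints so that they land exactly on $W^c(x)$. Fix a point $x\in M$. By Remark \ref{re-s}, for $g_t$ we have a compact $d$-dimensional manifold with boundary $Z$, a continuous family of $su$-paths $\Psi_z$ (w.r.t.\ $g_t$) with uniformly bounded number of legs, each beginning at $x$, with $\Psi_z(1)\in W^c(x)$, the map $z\mapsto \Psi_z(1)$ a smooth embedding onto a small disk in $W^c(x)$ wrapping nontrivially around $x$, and each $\Psi_z$ homotopic (rel endpoints) to the segment inside $W^c(x)$ from $x$ to $\Psi_z(1)$. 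Applying $\Phi_x$ to each $\Psi_z$ gives $su$-paths $\Phi_x(\Psi_z)$ w.r.t.\ $g_t^\tau$, again starting at $x$, with uniformly bounded number of legs (the number of legs is unchanged), depending continuously on $z$ by continuity of the maps $\Phi^{s/u}$ in Proposition \ref{inv-p} and of the $PCF_{x_i,x_{i+1}}(\tau)$.

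The issue is that $\Phi_x(\Psi_z)$ does not end at $\Psi_z(1)$ but, by Proposition \ref{key-p-2}, at $g^\tau_{-PCF_{\Psi_z}(\tau)}(\Psi_z(1))$ — and since $\Psi_z(1)$ lies in the $g_t$-orbit direction only through the center, this is a point in $W^c(x)$ shifted along the flow by a small amount (for $Z$ restricted to a small ball, $PCF_{\Psi_z}(\tau)$ is uniformly small, since $PCF_{\Psi_z}(\tau)\to 0$ as the path shrinks). So I would next append a correcting flow segment: since $g_t^\tau$ is accessible (Theorem \ref{new}), and in fact locally accessible by the argument opening the proof of Theorem \ref{new}, for each $z$ there is a short $su$-path $\mathcal{D}_z$ w.r.t.\ $g_t^\tau$ connecting $g^\tau_{-PCF_{\Psi_z}(\tau)}(\Psi_z(1))$ back to $\Psi_z(1)$, lying in a small neighborhood, with uniformly bounded number of legs and continuous in $z$; this uses that the orbit segment $\{g^\tau_s(\Psi_z(1)): s\}$ is short and local accessibility reconnects its endpoints within a controlled neighborhood. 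Setting $\tilde\Psi_z := \Phi_x(\Psi_z)\star \mathcal{D}_z$, we obtain $su$-paths w.r.t.\ $g_t^\tau$ starting at $x$ and ending at $\Psi_z(1)\in W^c(x)$, with uniformly bounded legs and continuous dependence on $z$, so conditions ({\bf a1}), ({\bf a2}), ({\bf a3}) hold.

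For ({\bf a4}) and ({\bf a5}): the endpoint map $z\mapsto \tilde\Psi_z(1)=\Psi_z(1)$ is \emph{exactly the same} smooth embedding $Z\to W^c(x)$ as for $g_t$, so it still wraps nontrivially around $x$ (the induced map $H_d(Z,\partial Z)\to H_d(W^c(x), W^c(x)-\{x\})$ is nonzero), and for $z\in\partial Z$ it does not end at $x$. Combined with the accessibility of $g_t^\tau$ (every point joins to $x$ by an $su$-path w.r.t.\ $g_t^\tau$, by Theorem \ref{new}), the cited Lemma (\cite{BW-1}, Corollary 5.3) then yields stable accessibility. I expect the main obstacle to be making the correcting paths $\mathcal{D}_z$ genuinely continuous in $z$ with uniformly bounded number of legs while keeping them in a small neighborhood; this requires a uniform (local) accessibility statement for $g_t^\tau$ with continuous dependence on base point, which follows from the homogeneous structure — the $su$-paths reconnecting nearby points can be taken of the algebraic form $\{y, h_1 y, h_2 h_1 y, \dots\}$ with $h_i$ in the stable/unstable horospherical subgroups of $g_t^\tau$, depending smoothly on the pair of endpoints — but it must be spelled out carefully, and one must check the orbit-direction discrepancy $PCF_{\Psi_z}(\tau)$ is controlled uniformly over $z\in Z$ once $Z$ is shrunk.

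\begin{proof}
Fix $x\in M$. By Proposition \ref{pp-s} and Remark \ref{re-s}, applied to $g_t$, there exist a compact $d$-dimensional manifold with boundary $Z$ (with $d=\dim E^c$) and a continuous family of $su$-paths $\Psi_z:[0,1]\to M$ w.r.t.\ $g_t$, indexed by $z\in Z$, such that:
\begin{itemize}
\item[(1)] the number of legs in $\Psi_z$ is uniformly bounded, say by $N$;
\item[(2)] $\Psi_z(0)=x$ for all $z$, and $\Psi_z(1)\in W^c(x)$;
\item[(3)] the map $\Theta:=\Psi_{(\cdot)}(1):Z\to W^c(x)$ is a smooth embedding whose image is contained in an arbitrarily small ball $B_\eta(x)$, and which wraps around $x$ nontrivially: the induced map $H_d(Z,\partial Z;\mathbb Z)\to H_d(W^c(x), W^c(x)-\{x\};\mathbb Z)$ is nonzero, and $\Theta(z)\neq x$ for $z\in\partial Z$;
\item[(4)] for each $z$, the path $\Psi_z$ is homotopic rel endpoints to the segment inside $W^c(x)$ joining $x$ to $\Theta(z)$.
\end{itemize}
We will modify this family to produce the required family for $g_t^\tau$.

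Apply the correspondence $\Phi_x$ of Definition \ref{key-d-4} to each $\Psi_z$. Writing $\Psi_z=\{x, x_1^z,\ldots, x_{n_z}^z\}$ with $n_z\le N$, Proposition \ref{key-p-2} gives that $\Phi_x(\Psi_z)$ is an $su$-path w.r.t.\ $g_t^\tau$ with the same number $n_z$ of legs, starting at $x$, and ending at
$$ g^\tau_{-PCF_{\Psi_z}(\tau)}\bigl(x_{n_z}^z\bigr)=g^\tau_{-PCF_{\Psi_z}(\tau)}\bigl(\Theta(z)\bigr). $$
By continuity of the maps $\Phi_x^{s/u}$ from Proposition \ref{inv-p} and continuity of each partial $PCF$ functional in its endpoints, the family $z\mapsto \Phi_x(\Psi_z)$ is continuous in $z$. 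Moreover, since $PCF_{x_i,x_{i+1}}(\tau)\to 0$ as the $su$-path shrinks to a point, by taking $\eta$ small we may ensure
$$ \sup_{z\in Z}\,|PCF_{\Psi_z}(\tau)|<\epsilon_0 $$
for any prescribed $\epsilon_0>0$; in particular the endpoint $g^\tau_{-PCF_{\Psi_z}(\tau)}(\Theta(z))$ lies in a small neighborhood of $\Theta(z)$, hence in a small ball around $x$.

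Next we correct the endpoints. By Theorem \ref{new}, $g_t^\tau$ is accessible; moreover, arguing as in the opening of the proof of Theorem \ref{new}, $g_t^\tau$ is locally accessible (Definition \ref{def-local-A}) and, because of the homogeneous structure, the reconnecting $su$-paths can be chosen of the algebraic form $\{y, h_1 y,\ldots, h_\ell y\}$ with each $h_i$ in the stable or unstable horospherical subgroup of $g_t^\tau$ and depending smoothly on the ordered pair of endpoints, with $\ell$ bounded uniformly and the whole path contained in a small neighborhood of its endpoints. Apply this with the pair of endpoints $\bigl(g^\tau_{-PCF_{\Psi_z}(\tau)}(\Theta(z)),\ \Theta(z)\bigr)$: we obtain, for each $z\in Z$, an $su$-path $\mathcal D_z$ w.r.t.\ $g_t^\tau$, continuous in $z$, with uniformly bounded number of legs, contained in a small ball around $x$, beginning at $g^\tau_{-PCF_{\Psi_z}(\tau)}(\Theta(z))$ and ending at $\Theta(z)$. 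Set
$$ \tilde\Psi_z:=\Phi_x(\Psi_z)\star \mathcal D_z. $$
Then $\tilde\Psi_z$ is an $su$-path w.r.t.\ $g_t^\tau$, continuous in $z$, with number of legs bounded by $N+\sup_z(\text{legs of }\mathcal D_z)$, beginning at $x$ and ending at $\Theta(z)\in W^c(x)$. This verifies conditions ({\bf a1}), ({\bf a2}), ({\bf a3}) for $g_t^\tau$ at $x$, with the same index manifold $Z$.

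Finally, the endpoint map of the new family is $\tilde\Psi_{(\cdot)}(1)=\Theta$, literally the same smooth embedding $Z\to W^c(x)$ as for $g_t$. Hence for $z\in\partial Z$ we have $\Theta(z)\neq x$, which is ({\bf a4}), and the induced map on homology $H_d(Z,\partial Z;\mathbb Z)\to H_d(W^c(x), W^c(x)-\{x\};\mathbb Z)$ is nonzero, which is ({\bf a5}). Therefore $g_t^\tau$ is centre engulfing at $x$. Since by Theorem \ref{new} every point of $M$ can be joined to $x$ by an $su$-path w.r.t.\ $g_t^\tau$, the cited Lemma (\cite{BW-1}, Corollary 5.3) applies and shows that $g_t^\tau$ is stably accessible.
\end{proof}
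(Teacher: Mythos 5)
Your approach is appealing but has a genuine gap in the construction of the correcting family $\mathcal D_z$. You need, for each $z\in Z$, an $su$-path \emph{with respect to} $g_t^\tau$ connecting the shifted endpoint $g^\tau_{-PCF_{\Psi_z}(\tau)}(\Theta(z))$ to $\Theta(z)$, and these must depend continuously on $z$ with a uniform bound on the number of legs. Your justification is that ``the reconnecting $su$-paths can be taken of the algebraic form $\{y,h_1 y,\ldots,h_\ell y\}$ with each $h_i$ in the stable or unstable horospherical subgroup of $g_t^\tau$.'' But $g_t^\tau$ is not a homogeneous flow and has no horospherical subgroups: its stable and unstable manifolds are only (H\"older) graphs over those of $g_t$, curved by the functions $\beta^{s/u}(y,\cdot)$ which depend on the base point $y$ (Proposition \ref{inv-p}); in particular $W^{s}_\tau(y)$ is not a left-translate of $W^s_\tau(y')$. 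Local accessibility of $g_t^\tau$ (which one can extract from the transfer map $\Phi$ together with Proposition \ref{key-p-6}) guarantees only the \emph{existence} of connecting $su$-paths, not a continuous selection; and the bracket argument you invoke from the opening of the proof of Theorem \ref{new} concerns $g_t$, not $g_t^\tau$, whose bundles $\tilde E^{s/u}$ are merely H\"older and so have no Lie brackets. Moreover, the only mechanism available for producing $g_t^\tau$-$su$-paths is $\Phi$ itself, and $\Phi$ shifts the final endpoint along the orbit by exactly a $PCF$ value, which is the quantity you are trying to cancel — so a naive attempt to build $\mathcal D_z$ by another transfer leads to an infinite regress.

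The paper avoids this difficulty entirely and does something structurally different. It does \emph{not} try to correct the endpoints to land on $\Theta(Z)$. Instead it keeps the uncorrected family $\Phi_x\circ\Psi_z^x$, shows that its endpoint set in $W^c(x)$ contains an orbit segment $\{g^\tau_t(x): t\in[L_x,R_x]\}$ whose extremities come from boundary parameters $z_1,z_2\in\partial Z$, and then — this is the essential missing ingredient in your proposal — re-runs the integration argument from the proof of Theorem \ref{new} (averaging the $PCF$ identity over the base point with respect to Haar measure) to produce a base point $x$ for which the strict inequality $L_x<R_x$ in \eqref{eq-neq} holds, so that the segment is nondegenerate. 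It then treats two cases: if $x$ lies in the open segment, the uncorrected family already satisfies ({\bf a4}) and ({\bf a5}); if not, a point $y$ in the open segment is chosen and a \emph{single, fixed} $su$-path from $y$ to $x$ (existence by accessibility) is prepended, establishing centre engulfing at $y$. Thus accessibility is used to add one fixed leg, not a $z$-dependent continuous family of corrections — a far weaker requirement that is actually available. Without the averaging step there is no guarantee the uncorrected endpoint set wraps around any point at all, and your correcting family cannot be legitimately constructed.
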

\begin{proof}
Notice first that $g_t$ and its time change $g_t^\tau$ share the same center foliation. Fix a point $x\in M$ such that $g_t(x)\neq x$ for $t\neq 0$. By Proposition \ref{pp-s}, it follows that $g_t$ is centre engulfing at $x$. More precisely, there exists a continuous family of $su$-paths $\Psi_z : [0,1] \to M$ indexed by points $z \in Z$, where $Z$ is a compact $d$-dimensional manifold with boundary, satisfying ({\bf a2}), ({\bf a3}), ({\bf a4}), and ({\bf a5}). And by Remark \ref{re-s}, we assume that the image $\Psi_{Z}(1)\subset W^c(x)$ is a ball of radius $r$, for $r>0$ small, and  the path $\Psi_z(\cdot):[0,1]\to N$ is homotopic to the segment (inside $W^c(x)$) connects $x$ to $\Psi_z(1)$ for any $z\in Z$.
A geometric picture of the map $\Psi_{(\cdot)}(1)$ is provided below as Figure \ref{Z-1}.

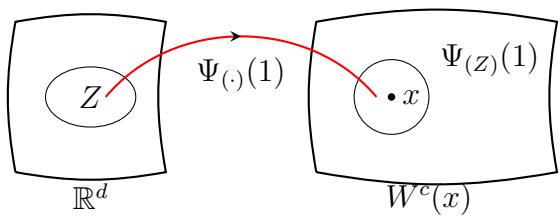
\begin{figure}[h]
\begin{center}
\begin{tikzpicture}

\coordinate (A) at (0,0);
\coordinate (B) at (2,0);
\coordinate (C) at (2,2);
\coordinate (D) at (0,2);

\coordinate (E) at (4,0);
\coordinate (F) at (7.2,0);
\coordinate (G) at (7.2,2);
\coordinate (H) at (4,2);
\coordinate (I) at (5,1);
\coordinate (J) at (1,1);
\node at (1,0) [below] {$\R^d$};
\node at (5.5,0) [below] {$W^c(x)$};
\node at (I) [circle, fill ,inner sep=1pt] {};
\node at (I) [draw, circle,inner sep=10pt] {};
\node at (I) [right] {$x$};
\node at (5.5,1.5) [right] {$\Psi_{(Z)}(1)$};
\node at (J) [] {$Z$};
\node at (3,1.3) [] {$\Psi_{(\cdot)}(1)$};

\draw (1,1) ellipse (.6cm and .4cm);
\draw[thick,black] (A) to [bend right=10] (B) to [bend left=10] (C) to [bend right=10] (D)to [bend right=10] (A);

\draw[thick,black] (E) to [bend right=10] (F) to [bend left=10] (G) to [bend right=10] (H)to [bend right=10] (E);

\draw[thick, red, postaction={on each segment={mid arrow=black}}] (1.2,1) to [bend left=50] (4.8,1);

\end{tikzpicture}

\caption{The map $\Psi_{(\cdot)}(1)$}
\label{Z-1}
\end{center}\end{figure}

Notice that the assumptions above enable us to obtain a family of maps $\Psi$ starting at any point $y\in M$. Namely, for any $z\in Z$, $\Psi_z$ represents a path $$\cC_x(z)=\{x,h_1(x),h_2h_1(x),\cdots, h_nh_{n-1}\cdots h_1 (x)\}$$ with $h_i\in G^{s/u}$, then for any $y$, define $\cC_y:=\{y,h_1(y),h_2h_1(y),\cdots, h_nh_{n-1}\cdots h_1 (y)\}$ (we denote this map as $\Psi_Z^y:[0,1]\to W^c(y)$), thus we immediately have $g_t$ is centre engulfing at $y$. Here, the homotopy assumption is important.

For any $x$, we consider $\Phi_x\circ\Psi_z^x$ indexed by $z\in Z$, where $\Phi_x$ is the map from Definition \ref{key-d-4}. Since $\Phi_x$ maps continuously $su$-paths w.r.t. $g_t$ to $su$-paths w.r.t. $g_t^\tau$, it follows that indexed by $Z$, $\Phi_x\circ\Psi_z^x: [0,1]\to M$ is a continuous family of $su$-paths w.r.t. $g_t^\tau$. It is straightforward that this map $\Phi_x\circ\Psi_z^x$ satisfies ({\bf a1}), ({\bf a2}) and ({\bf a3}), however there are problems for ({\bf a4}) and ({\bf a5}). This is due to the fact that the flow orbit lies in the center manifold and the map $\Phi_x$ acts essentially by sliding paths along the orbits. On one hand, it is possible that the set $\{\Phi_x\circ\Psi_z^x(1):z\in \partial Z\}$ contains $x$, which violates ({\bf a4}). On the other hand, it may be even worse that the set $\{\Phi_x\circ\Psi_z^x(1):z\in  Z\}$ does not contain $x$, certainly violating ({\bf a5}). To overcome these two ``bad" situations (if needed), we will add a few legs in the path $\Phi_x\circ\Psi_z^x$ by using the operation $\star$.

We first focus on the orbit of $x$.
Assume $t_1<0$ and $t_2>0$ are such that
$$\{g_t(x):t_1\le t\le t_2\}\cap \{\Psi_z^x(1):z\in\partial Z\}=\{g_{t_1}(x),g_{t_2}(x)\}.$$ Both $t_1$ and $t_2$ are finite, due to the fact that $\Psi_z^x$ satisfies ({\bf a4}) and ({\bf a5}). By compactness of $Z$, it follows that $ \{g_t(x):t_1< t< t_2\}\subset \{\Psi_z^x(1):z\in Z-\partial Z\}$.

Let $z_i\in \partial Z$ be such that the path $\Psi_{z_i}^x$ ends at $g_{t_i}(x)$, for $i=1,2$. By the definition of $\Phi_x$, we have that $$\Phi_x\circ\Psi_{z_1}^x(1)=g^\tau_{-PCF_{\Phi_x\circ\Psi_{z_1}^x}-\int_{t_1}^0\tau(g_t(x))dt}(x),\text{  }\Phi_x\circ\Psi_{z_2}^x(1)=g^\tau_{-PCF_{\Phi_x\circ\Psi_{z_2}^x}+\int^{t_2}_0\tau(g_t(x))dt}(x).$$
Since by our assumption on the map $\Psi_{Z}^{(\cdot)}$ (which follows from Remark \ref{re-s}), the path $\Psi_{z_i}^{y}$ is homotopic to the orbit segment from $y$ to $g_{t_i}(y)$. Arguing similarly to the proof of Theorem \ref{new}, we obtain that,
there is a point $x$ such that \be\label{eq-neq}-PCF_{\Phi_x\circ\Psi_{z_1}^x}-\int_{t_1}^0\tau(g_t(x))dt< -PCF_{\Phi_x\circ\Psi_{z_2}^x}+\int^{t_2}_0\tau(g_t(x))dt.\ee

From now on, we fix the $x$ satisfying \eqref{eq-neq}, and set $L_x$ ($R_x$ respectively) to be the value of the left (right respectively) hand side expression in \eqref{eq-neq}.
Since the map $\Phi_x\circ\Psi_{z_1}^x$ is continuous,  then the (nontrivial) orbit segment $\mathcal{O}_1:=\{g^\tau_t(x): t\in [L_x,R_x]\}$ is contained in $\{\Phi_x\circ\Psi_{z}^x(1):z\in Z\}$. Let $\mathcal{O}_2:=\{g^\tau_t(x): t\in (L_x,R_x)\}\subset \mathcal O_1$.

There are two cases from here. If $x\in \mathcal O_2$, then the family of continuous maps $\Phi_x\circ\Psi_z^x:[0,1]\to M$  is what we want. Indeed, it is straightforward that ({\bf a1}), ({\bf a2}), ({\bf a3}) and ({\bf a4}) are all satisfied, as for ({\bf a5}), the degree of the induced map does not change, since here $\Phi_x$ varies continuously, and as a result the induced map of   $H_d(Z, \partial Z, \mathbb Z) = \mathbb Z$ to $H_d(W^c(x), W^c(x) -\{x\}, \mathbb Z) = \mathbb Z $ remains the same. If otherwise $x\notin \mathcal O_2$, then pick any $y\in \mathcal O_2$, since $g_t^\tau$ is accessible by Theorem \ref{new}, there is a path $\cC$ starts at $y$ and ends at $x$, then we argue similarly as the previous case that the family of continuous maps $\tilde{\Psi}_z$ defined by $$\tilde{\Psi}_z(t):=\begin{cases}\vp(t),\text{ if }t\in[0,1/2];\\ \cC\star[\Phi_x\circ\Psi_z^x(2t-1)],\text{ if }t\in [1/2,1]\end{cases}$$suffices for our purpose. Here $\vp(t)$ is a continuous parametrization of the path $\cC$ satisfying $\vp(0)=y$ and $\vp(1/2)=x$. We remark that in this case the paths defined by these maps starts at $y$ instead of $x$. Therefore, we obtain centre engulfing at the point $x$ or $y$, completing the proof.
\end{proof}

\text{ }\\
Department of Mathematics\\University of Maryland College Park\\
4176 Campus Drive - William E. Kirwan Hall\\
College Park, MD 20742-4015\\
\url{dongchg@umd.edu}

\end{document}